\numberwithin{equation}{section}
\newtheorem{theorem}{Theorem}[section]
\newtheorem{corollary}[theorem]{Corollary}
\newtheorem{lemma}[theorem]{Lemma}
\newtheorem{proposition}[theorem]{Proposition}
\newtheorem{conjecture}[theorem]{Conjecture}
\newtheorem{example}[theorem]{Example}
\newtheorem{definition}[theorem]{Definition}
\newtheorem{remark}[theorem]{Remark}
\newcommand{\ba}{\mathbf{a}}
\newcommand{\bb}{\mathbf{b}}
\newcommand{\bi}{\mathbf{i}}
\newcommand{\bx}{\mathbf{x}}
\newcommand{\bz}{\mathbf{z}}
\newcommand{\bm}{\mathbf{m}}
\newcommand{\bp}{\mathbf{p}}
\newcommand{\bn}{\mathbf{n}}
\newcommand{\g}{\mathfrak{g}}
\newcommand{\n}{\mathfrak{n}}
\newcommand{\ve}{\varepsilon}
\newcommand{\ts}{\otimes}
\newcommand{\FFLV}{{\operatorname*{FFLV}}}
\title{Lusztig polytopes and FFLV polytopes}
\author{Xin Fang, Gleb Koshevoy}
\address{X. F.: Abteilung Mathematik, Department Mathematik/Informatik, Universit\"at zu K\"oln, 50931, Cologne, Germany}
\address{G. K.: The Institute for Information Transmission Problems of the RAS, 19, Bol’shoi Karetnyi per., 127051 Moscow, Russia} 
\email{xinfang.math@gmail.com}
\email{koshevoyga@gmail.com}
\begin{document}

\begin{abstract}
In this paper we prove that in type $\tt A_n$, the Feigin-Fourier-Littelmann-Vinberg (FFLV) polytope coincides with the Minkowski sum of Lusztig polytopes arising from various reduced decompositions. Using this result, we formulate a conjecture about the crystal structures on FFLV polytopes. 
\end{abstract}

\maketitle

\section{Introduction}

Constructing bases of representations of Lie algebras is one of the central topics in representation theory. For semi-simple Lie algebras and their finite dimensional irreducible representations, various bases (Gelfand-Tsetlin bases, canonical/global crystal bases, standard monomial bases, Poincar\'e-Birkhoff-Witt-type bases, Mirkovi\'c-Vilonen bases, bases arising from cluster structures, \emph{etc}) are constructed using quite different methods. Comparing these bases, or more specific, studying base change matrices, is usually a very hard question.

Each of these bases comes with a parametrisation by a polyhedral structure (polyhedral cones, convex polytopes, polyhedral complexes, \emph{etc}). The first step towards studying the base change matrices is to compare the polyhedral structures. 
For a simple Lie algebra of type $\tt A_n$ and the finite dimensional irreducible representation $V(\lambda)$ of highest weight $\lambda$, there are two PBW-type basis known for $V(\lambda)$.
\begin{enumerate}
\item The Feigin-Fourier-Littelmann-Vinberg (FFLV) basis: such a basis is compatible with the PBW filtration on $V(\lambda)$. The FFLV basis is parametrised by lattice points in the FFLV polytope - a lattice polytope having the facet description by Dyck paths.
\item The canonical basis of Lusztig: for a fixed reduced decomposition of the longest element in the Weyl group, such a basis of $V(\lambda)$ admits a parametrisation by a rational polytope, called Lusztig polytope. The facets of such polytopes are more complicated (see \cite{GKS16, GKS19} for descriptions using rhombic tilings). They are quite different to FFLV polytopes.
\end{enumerate}

For each Lusztig polytope, there exists a unique crystal structure on the set of its lattice points defined using piece-wise linear combinatorics of \cite{lusztigbook}. Such a structure is only known for lattice points in FFLV polytopes in the case where the highest weight $\lambda$ is a multiple of a fundamental weight \cite{Kus}.

The goal of this paper is to prove an unexpected relation between these two polytopes: an FFLV polytope can be written into a Minkowski sum of Lusztig polytopes associated to various reduced decompositions (Theorem \ref{Thm:Main}). 

We will present in this paper two different proofs to this result:
\begin{enumerate}
\item representation-theoretical proof: such a proof makes use of birational sequences and essential bases introduced in \cite{FaFoL} in order to compare the lattice points in both polytopes;
\item combinatorial proof: as the FFLV polytopes are defined using facet description, we compare them with the facet descriptions of the Lusztig polytopes obtained in \cite{GKS16,GKS19} using rhombic tilings or equivalently the cluster structure. In this proof, the Dyck paths bounding the FFLV polytope are identified with the paths in specific rhombic tilings.
\end{enumerate}

These two proofs are in a "polar position": a polytope can be described either as the convex hull of points or the bounded intersection of half-spaces. These two descriptions are switched by the polar duality.

This relation between these two polytopes allows us to translate the crystal structure from the Lusztig polytopes to the FFLV polytopes: for multiples of a fundamental weight, we recover the results by Kus in \cite{Kus}. In the small rank examples, when $\lambda$ is generic enough, there are more than one way to implement the crystal structure on the lattice points in the FFLV polytope. We conjecture that when the Lie algebra is of type $\tt A_n$, for a generic weight, there exists $n!$ implementations of the crystal structure on the corresponding FFLV polytope.

In Section \ref{Sec:2}, after recalling briefly the definition of the Lusztig polytopes and the FFLV polytopes, we state the main result of the paper (Theorem \ref{Thm:Main}). Two proofs of the main theorem are provided in the following two sections: in Section \ref{Sec:3} it is proved using representation theory by realising both polytopes as essential polytopes associated to a birational sequence; in Section \ref{Sec:4} a combinatorial proof is provided by explicitly writing down the facets of the Lusztig polytopes with the help of rhombic tiling. In the last Section \ref{Sec:5} we state the conjecture on the crystal structures on FFLV polytopes, and justify it in the $\tt A_2$ examples.

\subsection*{Acknowledgements} 
We thank Deniz Kus for careful reading of the manuscript. X.F. thanks Ghislain Fourier and Bea Schumann for helpful discussions. Part of this work is carried out during a research visit of X.F., he would like to thank Independent University of Moscow and the Laboratoire J.-V.Poncelet for their hospitality; G.K. thanks the Institute of Mathematics at University Cologne for hospitality and support by RSF grant 16-11-10075.

\section{Polytopes parametrising bases in Lie algebras}\label{Sec:2}

\subsection{Notations}
In this paper we fix $G=\mathrm{SL}_{n+1}(\mathbb{C})$ be the group of $(n+1)\times (n+1)$-complex matrices of determinant $1$; its Lie algebra $\g=\mathfrak{sl}_{n+1}(\mathbb{C})$ consists of traceless $(n+1)\times (n+1)$-complex matrices.

We fix the triangular decomposition $\g=\mathfrak{n}^+\oplus\mathfrak{h}\oplus\mathfrak{n}^-$ where $\mathfrak{n}^+$ (resp. $\mathfrak{n}^-$) consists of strict upper-triangular (resp. strict lower-triangular) matrices and $\mathfrak{h}$ contains the traceless diagonal matrices. Let $U^-\subseteq G$ be the subgroup of unipotent lower-triangular matrices with Lie algebra $\mathfrak{n}^-$. The corresponding universal enveloping algebras will be denoted by $U(\mathfrak{n}^-)$.

Let $n=\dim\mathfrak{h}$ be the rank of $\g$. The simple roots in $\g$ will be denoted by $\alpha_1,\cdots,\alpha_n$. Let $\Delta_+=\{\alpha_{i,j}:=\alpha_i+\cdots+\alpha_j\mid 1\leq i\leq j\leq n\}$ be the set of positive roots in $\g$ with $N=\#\Delta_+$. For $\beta\in\Delta_+$, we choose a generator $f_{\beta}$ of the root space $\mathfrak{g}_{-\beta}$. We fix $U_{-\beta}\subseteq U^-$ to be the unipotent subgroup with Lie algebra $\g_{-\beta}$. Let $\varpi_1,\cdots,\varpi_n$ be the fundamental weights and $\Lambda^+:=\sum_{i=1}^n\mathbb{N}\varpi_i$ be the set of dominant integral weights. For $\lambda\in\Lambda^+$, the finite dimensional irreducible representation of $\g$ associated to $\lambda$ will be denoted by $V(\lambda)$. We fix a highest weight vector $v_\lambda\in V(\lambda)$.

Let $W$ be the Weyl group of $\g$ with simple reflections $s_1,\cdots,s_n$, where $s_i$ corresponds to the simple root $\alpha_i$, and $w_0\in W$ be the longest element. The length function on $W$ is denoted by $\ell$. Let $R(w_0)$ be the set of all reduced decompositions of $w_0$. An element in $R(w_0)$ will be denoted by either a reduced word $\bi=(i_1,\cdots,i_N)$ or a reduced decomposition $\underline{w}_0=s_{i_1}\cdots s_{i_N}$.

We denote by $U_q(\g)$ the quantum group over $\mathbb{C}(q)$ with Chevalley generators $E_i$, $F_i$ and $K_i^{\pm 1}$ for $1\leq i\leq n$; $U_q(\mathfrak{n}^-)$ denotes the $\mathbb{C}(q)$-subalgebra of $U_q(\g)$ generated by $F_i$ for $1\leq i\leq n$. For $\lambda\in\Lambda^+$, let $V_q(\lambda)$ be the finite dimensional irreducible representation of $U_q(\g)$ of type $1$. We fix a highest weight vector $v_\lambda^q\in V_q(\lambda)$. (For readers who are not familiar with quantum groups, we recommend read \cite{Jantzen} for details.)

We will consider on $\mathbb{Z}^N$ the following total orderings: for $\ba=(a_1,\cdots,a_N)$, $\bb=(b_1,\cdots,b_N)\in\mathbb{Z}^N$,
\begin{enumerate}
\item opposite lexicographic ordering $>_{\mathrm{oplex}}$: $\ba>_{\mathrm{oplex}}\bb$ if there exists $1\leq i\leq N$ such that $a_1=b_1$, $\cdots$, $a_{i-1}=b_{i-1}$ and $a_i<b_i$;
\item right opposite lexicographic ordering $>_{\mathrm{roplex}}$: $\ba>_{\mathrm{roplex}}\bb$ if there exists $1\leq i\leq N$ such that $a_N=b_N$, $\cdots$, $a_{i+1}=b_{i+1}$ and $a_i<b_i$.
\end{enumerate}
Let $\succ$ be the partial order on $\mathbb{Z}^N$ defined by the intersection of above ordeings: $\ba\succ\bb$ if both $\ba>_{\mathrm{oplex}}\bb$ and $\ba>_{\mathrm{roplex}}\bb$ hold.

We denote $\mathbb{R}^{\Delta_+}$ the set of function from $\Delta_+$ to $\mathbb{R}$. For such a function $\ba\in\mathbb{R}^{\Delta_+}$, we write $a_\beta:=\ba(\beta)$ for $\beta\in\Delta_+$. Once an enumeration of elements in $\Delta_+$ is fixed, say $\Delta_+=\{\beta_1,\beta_2,\cdots,\beta_N\}$, we get an identification of $\mathbb{R}^{\Delta_+}$ to $\mathbb{R}^N$ sending a function $\ba$ to $(a_{\beta_1},a_{\beta_2},\cdots,a_{\beta_N})$.

For two polytopes $P$ and $Q$ in the same vector space $\mathbb{R}^N$, we denote their Minkowski sum by $P+Q$, $P+Q:=\{p+q\,|\, p\in P, \, q\in Q\}$.

\subsection{Canonical basis and Lusztig polytopes}

To a fixed reduced decomposition $\bi=(i_1,\cdots,i_N)\in R(w_0)$ we associate an enumeration of positive roots in $\Delta_+$: for $k=1,\cdots,N$, we set $\beta_k^\bi=s_{i_1}\cdots s_{i_{k-1}}(\alpha_{i_k})\in\Delta_+$, and $\underline{\beta}^\bi:=(\beta^\bi_1,\beta^\bi_2,\cdots,\beta^\bi_N)$.

For $1\leq i\leq n$, let $T_i:U_q(\g)\to U_q(\g)$ be the Lusztig's automorphism (see \cite[Chapter 37]{lusztigbook} for details, our choice here is $T_i=T_{i,1}''$).

For a reduced word $\bi\in R(w_0)$ and $m\in\mathbb{N}$, the quantum PBW root vector $F_{\beta_k^{\bi}}^{(m)}$ associated to $\beta_k^\bi$ is defined by:
$$F_{\beta_k^\bi}^{(m)}:=T_{i_1}T_{i_2}\cdots T_{i_{k-1}}(F_{i_k}^{(m)})\in U_q(\n^-),$$
where
$$F_i^{(n)}=\frac{F_i^n}{[n]_{q}!},\ \ [k]_q:=\frac{q^k-q^{-k}}{q-q^{-1}},\text{ and } [k]_q!=[k]_q[k-1]_q\cdots[1]_q.$$
For $\bm=(m_1,m_2,\cdots,m_N)\in\mathbb{N}^N$, we denote
$$F_\bi^{(\bm)}:=F_{\beta_1^{\bi}}^{(m_1)}F_{\beta_2^{\bi}}^{(m_2)}\cdots F_{\beta_N^{\bi}}^{(m_N)}\in U_q(\n^-).$$

According to \cite[Corollary 40.2.2]{lusztigbook}, for any $\bi\in R(w_0)$, the set $\{F_\bi^{(\bm)}\mid \bm\in\mathbb{N}^N\}$ forms a vector space basis of $U_q(\n^-)$.

There is a remarkable basis of $U_q(\n^-)$, whose existence is guaranteed by the following theorem (see \cite{Lus90, Cal1}).

\begin{theorem}\label{Thm:canonical}
Let $\bi\in R(w_0)$.
\begin{enumerate}
\item For any $\bm\in\mathbb{N}^N$, there exists a unique element $b_\bi(\bm)\in U_q(\n^-)$ satisfying the following properties:
$$\overline{b_\bi(\bm)}=b_\bi(\bm),$$
$$b_\bi(\bm)=F_\bi^{(\bm)}+\sum_{\bn\prec\bm}\lambda_\bm^\bn F_\bi^{(\bn)},\ \ \lambda_\bm^\bn\in q\mathbb{Z}[q].$$
\item The map $b_\bi$ sending $\bm$ to $b_\bi(\bm)$ gives a bijection between $\mathbb{N}^N$ and a basis $\mathbf{B}$ of $U_q(\n^-)$. This basis $\mathbf{B}$ does not depend on the choice of $\bi\in R(w_0)$.
\item For $\lambda\in\Lambda^+$, we set $\mathbf{B}(\lambda):=\{b\in\mathbf{B}\mid b\cdot v_\lambda^q\neq 0\}$. The set $\{b\cdot v_\lambda^q\mid b\in\mathbf{B}(\lambda)\}$ is a basis of $V_q(\lambda)$.
\end{enumerate}
\end{theorem}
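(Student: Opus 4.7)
This is Lusztig's existence theorem for the canonical basis of $U_q(\n^-)$, so my plan follows Lusztig's original construction in \cite{lusztigbook} with the streamlining of \cite{Cal1}: first establish that the bar involution acts on the PBW basis $\{F_\bi^{(\bm)}\}_{\bm\in\mathbb{N}^N}$ by a unitriangular matrix with respect to $\prec$, then invoke a Gaussian-elimination lemma to produce $b_\bi(\bm)$, then deduce $\bi$-independence by reducing braid moves to a rank-two computation, and finally obtain the basis statement for $V_q(\lambda)$ from an integrability argument.

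For part (1), the key input is the Levendorskii-Soibelman commutation formula: for $j<k$ the product $F_{\beta_k^\bi}^{(m_k)}F_{\beta_j^\bi}^{(m_j)}$ equals a $q$-power times $F_{\beta_j^\bi}^{(m_j)}F_{\beta_k^\bi}^{(m_k)}$ plus a $\mathbb{Z}[q,q^{-1}]$-combination of PBW monomials supported strictly inside the interval $(j,k)$. Combined with the semilinearity of the bar involution and an induction along $\prec$, this yields an expansion $\overline{F_\bi^{(\bm)}}=F_\bi^{(\bm)}+\sum_{\bn}r_{\bm,\bn}F_\bi^{(\bn)}$ that is unitriangular in the appropriate direction for $\prec$. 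Given such a unitriangular bar action, the classical Lusztig lemma produces, by successive Gaussian elimination in the $\mathbb{Z}$-graded components of $U_q(\n^-)$, a unique bar-invariant vector $b_\bi(\bm)$ whose $F_\bi^{(\bm)}$-coefficient is $1$ and whose remaining coefficients $\lambda_\bm^\bn$ lie in $q\mathbb{Z}[q]$ and vanish unless $\bn\prec\bm$.

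For part (2), I would use the fact that any two elements of $R(w_0)$ are connected by a finite sequence of braid moves. Commutation moves $s_is_j=s_js_i$ (for $|i-j|>1$) simply transpose the two corresponding PBW variables and the canonical basis is preserved trivially. The substantive case is the length-three braid move $s_is_{i+1}s_i=s_{i+1}s_is_{i+1}$, which localises the problem to a rank-two computation inside the $\mathfrak{sl}_3$-subalgebra generated by $F_i,F_{i+1}$. Inside the corresponding weight spaces one writes down the change-of-PBW-basis matrix explicitly and checks by a direct $q$-binomial identity that the bar-invariant vectors characterized by the triangularity conditions on either side match up. Iterating along any path in $R(w_0)$ yields the $\bi$-independence of $\mathbf{B}$.

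For part (3), recall that $V_q(\lambda)$ is the quotient of $U_q(\n^-)\cdot v_\lambda^q$ by the relations $F_i^{\langle\lambda,\alpha_i^\vee\rangle+1}\cdot v_\lambda^q=0$. Using the $\bi$-independence together with a choice of reduced word starting at $i$, one shows that $\mathbf{B}$ is compatible with the filtration by powers of $F_i$ on an extremal vector; by the integrability of $V_q(\lambda)$, a canonical basis element $b$ annihilates $v_\lambda^q$ if and only if its PBW leading exponents violate the integrability bounds. It follows that $\{b\cdot v_\lambda^q\,|\,b\in\mathbf{B}(\lambda)\}$ is linearly independent and spans, hence is a $\mathbb{C}(q)$-basis of $V_q(\lambda)$.

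The main obstacle is part (2): reconciling two different triangular presentations across a braid move requires an explicit $q$-binomial identity in rank two, and one must verify that the partial order $\prec$ used to characterise $b_\bi(\bm)$ interacts cleanly with the re-indexing of $\Delta_+$ induced by the braid move, so that the uniqueness clauses from part (1) really do produce the same element of $U_q(\n^-)$ on both sides.
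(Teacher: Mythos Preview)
The paper does not supply a proof of this theorem: it is stated as a background result with the parenthetical citation ``(see \cite{Lus90, Cal1})'' and is then used as a black box. There is therefore no in-paper argument to compare your proposal against.

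Your outline is a faithful sketch of Lusztig's original construction as refined by Caldero, and the three stages you describe (Levendorskii--Soibelman commutation giving bar-unitriangularity on the PBW basis, the elementary-divisor/Gaussian-elimination lemma producing the bar-invariant $b_\bi(\bm)$, and the rank-two braid computation for $\bi$-independence) are precisely the ingredients of the references the paper invokes. The only point worth flagging is your handling of the partial order: in the paper $\prec$ is defined as the \emph{intersection} of $>_{\mathrm{oplex}}$ and $>_{\mathrm{roplex}}$, and the unitriangularity of the bar involution holds with respect to each of these total orders separately (indeed Levendorskii--Soibelman gives strict containment of supports in $(j,k)$, which is what forces both tail conditions simultaneously). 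You correctly identify this as the delicate step in part~(2), but in a full write-up you would need to check that the braid re-indexing respects both lexicographic refinements at once, not merely one of them, so that the uniqueness from part~(1) genuinely pins down the same element.
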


This basis $\mathbf{B}$ is called the \emph{canonical basis} \cite{Lus90} (a.k.a. \emph{global crystal basis} \cite{Kas91}) of $U_q(\n^-)$, and the map $b_\bi:\mathbb{N}^N\to \mathbf{B}$ is called the Lusztig parametrisation of the canonical basis corresponding to the reduced decomposition $\bi$.

\begin{theorem}[\cite{BZ01}]
For $\bi\in R(w_0)$ and $\lambda\in\Lambda^+$, there exists a rational polytope $\mathcal{L}_\bi(\lambda)\subseteq \mathbb{R}^{\Delta_+}$ satisfying
$$b_\bi^{-1}(\mathbf{B}(\lambda))=\mathcal{L}_\bi(\lambda)_{\mathbb{Z}},$$
where $\mathcal{L}_\bi(\lambda)_{\mathbb{Z}}:=\mathcal{L}_\bi(\lambda)\cap\mathbb{Z}^{\Delta_+}$.
\end{theorem}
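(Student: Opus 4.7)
The plan is to go through the string parametrization, following the approach of Berenstein--Zelevinsky. For a fixed reduced word $\bi=(i_1,\ldots,i_N)$, define the string parametrization $x_\bi\colon \mathbf{B}\to\mathbb{Z}_{\geq 0}^N$ using Kashiwara's raising operators $\tilde e_i$: set $x_1$ to be maximal with $\tilde e_{i_1}^{x_1}b\neq 0$, then $x_2$ maximal with $\tilde e_{i_2}^{x_2}\tilde e_{i_1}^{x_1}b\neq 0$, and so on. The image of $x_\bi$ is known to be the set of lattice points in a rational polyhedral cone $C_\bi^{\mathrm{str}}$, the \emph{string cone}. The first substantial step is to describe $x_\bi(\mathbf{B}(\lambda))$ as the lattice points of a rational polytope $\mathcal{S}_\bi(\lambda)$, the string polytope: the membership condition $b\cdot v_\lambda^q\neq 0$ translates, via the characterization of the irreducible crystal $B(\lambda)$ inside $\mathbf{B}$, into finitely many linear inequalities of the form $x_k\leq \langle\alpha_{i_k}^{\vee},\lambda\rangle-(\text{linear combination of earlier }x_j)$, obtained from the vanishing $\tilde f_{i_k}^{m+1}$ on appropriate weight spaces.

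Next, one invokes the piecewise-linear transition map $R_\bi\colon x_\bi(\mathbf{B})\to b_\bi^{-1}(\mathbf{B})$ that converts the string coordinates into the Lusztig coordinates. Berenstein--Zelevinsky give an explicit formula for $R_\bi$ as the tropicalization of a composition of elementary rational maps between totally positive coordinate systems on $U^-$. With this in hand, one defines
$$\mathcal{L}_\bi(\lambda):=R_\bi(\mathcal{S}_\bi(\lambda)),$$
and the desired equality $b_\bi^{-1}(\mathbf{B}(\lambda))=\mathcal{L}_\bi(\lambda)\cap\mathbb{Z}^{\Delta_+}$ is then a direct consequence of the functoriality of $R_\bi$ with respect to the two bijections $x_\bi$ and $b_\bi$, together with the integrality of the transition.

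The main obstacle is to verify that $\mathcal{L}_\bi(\lambda)$ really is a polytope, and not merely a finite union of polyhedral pieces, since $R_\bi$ is only piecewise-linear. The key is to exhibit a chamber decomposition on which $R_\bi$ is linear, and to show that when restricted to $\mathcal{S}_\bi(\lambda)$ the pieces glue to a single convex polytope. For type $\tt A_n$ and suitable $\bi$, this can be handled through the tropicalization of the Chamber Ansatz of Berenstein--Fomin--Zelevinsky, or, more conceptually, by invoking the cluster-algebraic structure on double Bruhat cells, which guarantees that the transition between any two parametrizations by positive charts tropicalizes to a piecewise-linear map sending rational polytopes to rational polytopes. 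Once this polyhedrality is established, rationality is automatic from the integrality of the defining formulas.
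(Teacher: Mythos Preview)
The paper does not supply its own proof of this theorem: it is quoted from \cite{BZ01} and used as a black box, with the remark that ``the original definition of Lusztig, Berenstein--Zelevinsky uses piece-wise linear combinatorics arising from tropicalisation of positive maps between tori.'' So there is no proof in the paper to compare against; what one can assess is whether your sketch faithfully reconstructs the Berenstein--Zelevinsky argument.

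Your outline captures the right ingredients (string parametrisation, string polytope, tropical transition map $R_\bi$), but the resolution of the convexity obstacle is not correct as written. The claim that tropicalised cluster transitions ``send rational polytopes to rational polytopes'' is false in general: a piecewise-linear bijection of $\mathbb{R}^N$ typically destroys convexity, and the image $R_\bi(\mathcal{S}_\bi(\lambda))$ has no a priori reason to be anything better than a polyhedral complex. Nor is there, for arbitrary $\bi$, a mechanism by which the linear pieces ``glue to a single convex polytope'' after restriction; that is precisely the subtle point.

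Berenstein--Zelevinsky do \emph{not} define $\mathcal{L}_\bi(\lambda)$ as the image of the string polytope under $R_\bi$. Instead they produce, directly in Lusztig coordinates, an explicit finite list of linear inequalities (expressed via $\bi$-trails, or equivalently as tropicalisations of generalised minors evaluated in the Lusztig positive chart) whose integer solutions are exactly $b_\bi^{-1}(\mathbf{B}(\lambda))$. Convexity and rationality are then immediate from the form of these inequalities, with no appeal to the transition map. If you want to salvage your route, you would need to argue that the condition $b\cdot v_\lambda^q\neq 0$ is already given by linear inequalities in the Lusztig coordinates themselves; pushing the string-polytope inequalities through $R_\bi$ does not accomplish this.
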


The polytope $\mathcal{L}_\bi(\lambda)$ is called the Lusztig polytope associated to $\bi$ and $\lambda$. The original definition of Lusztig, Berenstein-Zelevinsky \cite{Lus92, BZ01} uses piece-wise linear combinatorics arising from tropicalisation of positive maps between tori. Later we will present two different approaches to these polytopes.

There are two special reduced decompositions in $R(w_0)$:
$$\text{the lexmin decomposition: }\bi_n^{\min}=(1,2,1,3,2,1,\cdots,n,n-1,\cdots,1),$$
$$\text{the lexmax decomposition: }\bi_n^{\max}=(n,n-1,n,n-2,n-1,n,\cdots,1,\cdots,n).$$

For a dominant weight $\lambda\in\Lambda^+$, it is well-known that the Lusztig polytope $\mathcal{L}_{\bi^{\min}}(\lambda)$ is unimodular equivalent to the Gelfand-Tsetlin polytope (for recent references, see \cite{Ko, Ma}).

Recently, these polytopes are applied to study the branching problem of representations by Molev and Yakimova \cite{MY}; the tropical maximal cone of the toric degeneration of the flag variety arising from $\mathcal{L}_{\bi^{\min}}(\lambda)$ is determined by Makhlin \cite{Ma}.

\subsection{FFLV basis and FFLV polytopes}

With a different motivation, the FFLV polytopes \cite{FFL1,FFL2} appear in the study of bases compatible with the PBW filtration on finite dimensional irreducible representations of a simple Lie algebra. When the Lie algebra is of type $\mathsf{A}$ and $\mathsf{C}$, lattice points in these (lattice) polytopes parametrise such a basis in the representation.

We briefly recall the definition and basic properties of these polytopes.

A (type $\mathsf{A}$) Dyck path in $\Delta_+$ is a sequence of positive roots $\mathbf{p}=(\gamma_0,\gamma_1,\cdots,\gamma_k)$ for $k\geq 0$ satisfying 
\begin{enumerate}
\item $\gamma_0=\alpha_i$ and $\gamma_k=\alpha_j$ are simple roots;
\item if $\gamma_r=\alpha_{s,t}$, then $\gamma_{r+1}$ is either $\alpha_{s+1,t}$ or $\alpha_{s,t+1}$.
\end{enumerate}

For $1\leq i\leq j\leq n$, we set $\mathbb{P}_n:=\bigcup_{1\leq i\leq j\leq n}\mathbb{P}_{i,j}$ where $\mathbb{P}_{i,j}$ is the set of Dyck paths starting from $\alpha_i$ and ending in $\alpha_j$; 

For $\lambda=\lambda_1\varpi_1+\lambda_2\varpi_2+\cdots+\lambda_n\varpi_n\in\Lambda^+$, the polytope $\FFLV_n(\lambda)$ consists of the points $(a_\gamma)\in\mathbb{R}^{\Delta_+}$ satisfying
\begin{enumerate}
\item for any $\mathbf{p}\in\mathbb{P}_n$, if $\mathbf{p}\in\mathbb{P}_{i,j}$ then
$$\sum_{\gamma\in\mathbf{p}}a_\gamma\leq \lambda_i+\lambda_{i+1}+\cdots+\lambda_j;$$
\item for any $\gamma\in\Delta_+$, $a_\gamma\geq 0$.
\end{enumerate}

We denote $\FFLV_n(\lambda)_\mathbb{Z}:=\FFLV_n(\lambda)\cap\mathbb{Z}^{\Delta_+}$ to be the set of lattice points in $\FFLV_n(\lambda)$.

For a fixed enumeration $\beta_1,\beta_2,\cdots,\beta_N$ of $\Delta_+$ and a lattice point $\ba\in\FFLV_n(\lambda)_\mathbb{Z}$, we set

$$f^\ba:=f_{\beta_1}^{a_{\beta_1}}f_{\beta_2}^{a_{\beta_2}}\cdots f_{\beta_N}^{a_{\beta_N}}\in U(\mathfrak{n}^-).$$

\begin{theorem}[\cite{FFL1}]\label{Thm:FFL}
The following statements hold:
\begin{enumerate}
\item The set $\{f^\ba\cdot v_\lambda\mid \ba\in\FFLV_n(\lambda)\}\subseteq V(\lambda)$ forms a basis. 
\item $\FFLV_n(\lambda)$ is a lattice polytope satisfying the following Minkowski property: for $\lambda,\mu\in\Lambda^+$,
$$\FFLV_n(\lambda)+\FFLV_n(\mu)=\FFLV_n(\lambda+\mu),\ \ \FFLV_n(\lambda)_\mathbb{Z}+\FFLV_n(\mu)_\mathbb{Z}=\FFLV_n(\lambda+\mu)_\mathbb{Z}.$$
\end{enumerate}
\end{theorem}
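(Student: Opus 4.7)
The plan is to establish (1) via the PBW filtration on $V(\lambda)$, with (2) appearing as an intermediate step for the dimension count. Introduce the PBW filtration $F_s V(\lambda) := \mathrm{span}\{x_1 \cdots x_k v_\lambda : k \leq s,\ x_i \in \n^-\}$, so that the associated graded space $V^a(\lambda) := \bigoplus_s F_sV(\lambda)/F_{s-1}V(\lambda)$ is a cyclic module over the symmetric algebra $S(\n^-)$ generated by the image of $v_\lambda$. Writing $V^a(\lambda) \cong S(\n^-)/I_\lambda$ for an ideal $I_\lambda$, the first critical step is to identify $I_\lambda$ explicitly. The natural candidate is the ideal generated by the $U(\n^+)$-orbit, under the induced action on $S(\n^-)$, of the powers $f_{\alpha_i}^{\lambda_i + 1}$, which are the minimal $\mathfrak{sl}_2$-theoretic annihilators of $v_\lambda$ along each simple direction.

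Next I would show that the monomials $f^{\ba}v_\lambda$ with $\ba \in \FFLV_n(\lambda)_\mathbb{Z}$ span $V(\lambda)$. Fixing a monomial order on $\mathbb{N}^{\Delta_+}$ compatible with the height stratification of $\Delta_+$, the claim is that every Dyck path inequality violated by $\ba$ corresponds to the PBW-leading monomial of a relation obtained by applying a suitable sequence of raising operators to a generator $f_{\alpha_i}^{\lambda_i+\cdots+\lambda_j+1}$ of $I_\lambda$. Concretely, iterated application of $e_i, e_{i+1}, \dots, e_j$ traversing a Dyck path $\mathbf{p} \in \mathbb{P}_{i,j}$ should produce a relation rewriting $f^{\ba}v_\lambda$ as a combination of strictly smaller monomials; combinatorially, the Dyck path inequality is exactly what cuts out the surviving leading monomials.

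For linear independence I would first address (2). The inclusion $\FFLV_n(\lambda) + \FFLV_n(\mu) \subseteq \FFLV_n(\lambda + \mu)$ is immediate from summing the defining inequalities. The reverse, together with the stronger integer version, requires an explicit splitting of a lattice point in the larger polytope into lattice summands; I would attempt this inductively on $n$, using a greedy network-flow-style algorithm that preserves tightness of each Dyck path constraint when restricted to the piece already distributed. Combining this with $\#\FFLV_n(\varpi_k)_\mathbb{Z} = \binom{n+1}{k} = \dim V(\varpi_k)$ (from the identification of $\FFLV_n(\varpi_k)$ with a hypersimplex) and the Weyl dimension formula yields $\#\FFLV_n(\lambda)_\mathbb{Z} = \dim V(\lambda)$, closing the basis argument.

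The principal obstacle will be the integer Minkowski property in (2): rational Minkowski additivity is formal, but lattice-point decomposition generally fails unless one exploits a unimodular or network-matrix structure of the constraint system. A cleaner alternative would be to bypass this by proving (1) via Gr\"obner or toric degeneration of the flag variety $G/B$ to the toric variety of $\FFLV_n(\lambda)$, which would yield normality, basis existence, and Minkowski additivity simultaneously as consequences of flatness of the family. The direct combinatorial route above should be shorter but technically delicate, particularly in tracking how the relations associated to overlapping Dyck paths interact under the chosen monomial order.
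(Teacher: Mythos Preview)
The paper does not prove this theorem at all: it is stated with the attribution \cite{FFL1} and quoted as a known result, with no proof provided. So there is no ``paper's own proof'' to compare against.

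That said, your sketch is essentially the strategy of the original Feigin--Fourier--Littelmann paper. There one passes to the associated graded $V^a(\lambda)=S(\n^-)/I_\lambda$, identifies $I_\lambda$ as generated by the $U(\n^+)$-span of the $f_{\alpha_i}^{\lambda_i+1}$, and proves spanning by a straightening argument in which the relations coming from Dyck paths allow one to rewrite any monomial outside $\FFLV_n(\lambda)_\mathbb{Z}$ in terms of smaller ones. Linear independence is then a dimension count, and the integer Minkowski property is exactly the tool used to reduce that count to the fundamental-weight case. Your identification of the integer Minkowski decomposition as the delicate point is accurate: in \cite{FFL1} this is handled by an explicit combinatorial splitting algorithm, not by abstract normality arguments. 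The toric-degeneration alternative you mention is also viable and appears in later literature, but it is not the route taken in \cite{FFL1}.

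One small correction: $\FFLV_n(\varpi_k)$ is not literally a hypersimplex, though its lattice points are indeed in bijection with $k$-subsets of $\{1,\dots,n+1\}$ (see Proposition~\ref{Prop:PointsFFLV} in the present paper, or \cite{Fei}), so the cardinality claim $\#\FFLV_n(\varpi_k)_\mathbb{Z}=\binom{n+1}{k}$ is correct.
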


\begin{remark}
As a consequence of the main result in \cite{Fou}, $\FFLV_n(\lambda)$ is in general not unimodular equivalent to the Lusztig polytope $\mathcal{L}_{\bi^{\min}}(\lambda)$ (since it is not unimodular equivalent to the Gelfand-Tsetlin polytope). 
\end{remark}

\subsection{Main result: statement}

We start with defining some special reduced decomposition in $R(w_0)$.

For a fixed $1\leq k\leq n$, we define
$$\underline{w}_{k,n}:=(s_ks_{k-1}\cdots s_1)(s_{k+1}s_k\cdots s_2)\cdots (s_ns_{n-1}\cdots s_{n-k+1}),$$
where $w_{k,n}$ is the corresponding element in the Weyl group. We set furthermore
$$S_{n-(k-1)+[k-1]}:=(s_ns_{n-1}\cdots s_{n-k+2})(s_ns_{n-1}\cdots s_{n-k+3})\cdots (s_{n}s_{n-1})s_n,$$
and 
$$S_{[n-k]}=(s_1s_2\cdots s_{n-k})(s_1s_2\cdots s_{n-k-1})\cdots (s_1s_2)s_1.$$
Then
$S_{n-(k-1)+[k-1]}^{-1}$ is a shift of $\bi_{k-1}^{\max}$ and $S_{[n-k]}^{-1}$ coincides with $\bi_{n-k}^{\min}$, therefore
$$\ell(w_{k,n})=(n-k+1)k,\ \ \ell(S_{n-(k-1)+[k-1]})=\frac{k(k-1)}{2},\ \ \ell(S_{[n-k]})=\frac{(n-k+1)(n-k)}{2}.$$

\begin{lemma}
For any $k=1,\cdots,n$, $\underline{w}_{k,n}\cdot S_{n-(k-1)+[k-1]}\cdot S_{[n-k]}\in R(w_0)$.
\end{lemma}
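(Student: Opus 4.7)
The plan is to establish reducedness via the parabolic factorization $w_0 = w^J \cdot w_J$, where $w_J$ is the longest element of a Levi subgroup $W_J$ and $w^J$ is the unique longest element of the set $W^J$ of minimal-length right coset representatives for $W/W_J$. First I would note that the three length formulas stated just before the lemma sum to $\frac{n(n+1)}{2} = \ell(w_0)$, so the concatenated word has the correct letter length. Since any word in the simple reflections of length $\ell(w_0)$ that represents $w_0$ is automatically reduced, it therefore suffices to verify that the product of the three factors equals $w_0$ in $W \cong S_{n+1}$.

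For the two rightmost factors I would use the remark preceding the lemma: $S_{[n-k]}^{-1} = \bi_{n-k}^{\min}$ and $S_{n-(k-1)+[k-1]}^{-1}$ is a shift of $\bi_{k-1}^{\max}$. Since $\bi_m^{\min}$ and $\bi_m^{\max}$ are reduced expressions for the longest element of $S_{m+1}$, and longest elements of Weyl groups are involutions, $S_{[n-k]}$ is the longest element of $W_1 := \langle s_1,\dots,s_{n-k}\rangle$ and $S_{n-(k-1)+[k-1]}$ is the longest element of $W_2 := \langle s_{n-k+2},\dots,s_n\rangle$. These generating sets are disjoint and non-adjacent, so $W_1$ and $W_2$ commute elementwise. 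Consequently, $w_J := S_{n-(k-1)+[k-1]} \cdot S_{[n-k]}$ is the longest element of the parabolic $W_J = W_1 \times W_2$ attached to $J = \{1,\dots,n\} \setminus \{n-k+1\}$; as a permutation of $\{1,\dots,n+1\}$, $w_J$ reverses the blocks $\{1,\dots,n-k+1\}$ and $\{n-k+2,\dots,n+1\}$ independently.

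It remains to show that $\underline{w}_{k,n}$ equals the unique longest element $w^J$ of $W^J$, for then $w_0 = w^J \cdot w_J$ is the desired length-additive factorization. By construction $\ell(\underline{w}_{k,n}) = (n-k+1)k = \ell(w_0) - \ell(w_J) = \ell(w^J)$, so it is enough to verify $\underline{w}_{k,n} \in W^J$, i.e.\ that its right descent set is contained in the singleton $\{s_{n-k+1}\}$. The cleanest way is to identify $\underline{w}_{k,n}$ explicitly as the cyclic shuffle with one-line notation $[k+1,k+2,\dots,n+1,1,2,\dots,k] \in S_{n+1}$, whose unique descent is at position $n-k+1$. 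Granting this identification, left-multiplying the block reversal produced by $w_J$ gives the full reversal $[n+1,n,\dots,1] = w_0$, which finishes the argument.

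The main obstacle is verifying the permutation form of $\underline{w}_{k,n}$. I would handle this by induction on the number of blocks in the defining product: the rightmost block $(s_n s_{n-1}\cdots s_{n-k+1})$ is a standard reduced word for the cyclic permutation on $\{n-k+1,\dots,n+1\}$ sending $n-k+1 \mapsto n+1$, and each additional block $(s_{k+j}s_{k+j-1}\cdots s_{j+1})$ prepended on the left shifts the active window by one index while preserving the pattern on the already-permuted entries. The bookkeeping is routine but requires care; an equally valid alternative is to check the right-descent condition directly on the word $\underline{w}_{k,n}$ by tracking how the unique terminal $s_{n-k+1}$ in the last block can be moved through braid and commutation relations.
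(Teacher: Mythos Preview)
Your argument is correct and takes a genuinely different route from the paper. The paper proves the lemma pictorially, by describing the wiring diagram of the concatenated word: the wires $1,\dots,k$ first travel northeast and the wires $k+1,\dots,n+1$ travel southeast until they form a $k\times(n-k+1)$ rectangular grid of crossings (this is $\underline{w}_{k,n}$), after which the two bundles are separately reversed by the lexmax and lexmin patterns (this is $S_{n-(k-1)+[k-1]}\cdot S_{[n-k]}$); since each pair of wires visibly crosses exactly once, the word is reduced and represents $w_0$.

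Your approach is algebraic instead: you recognise the two right factors as the longest element $w_J$ of the Levi $W_J$ for $J=\{1,\dots,n\}\setminus\{n-k+1\}$, identify $w_{k,n}$ explicitly as the cyclic shuffle $[k+1,\dots,n+1,1,\dots,k]$, and then simply multiply to get $w_0$; reducedness follows from the length count. One small wrinkle in your exposition: the equation $\ell(\underline{w}_{k,n})=(n-k+1)k$ is a priori only the \emph{word} length, so it cannot by itself certify $w_{k,n}=w^J$. But your explicit identification of $w_{k,n}$ as the cyclic shuffle (whose inversion count is manifestly $k(n-k+1)$) and the subsequent direct verification $w_{k,n}\cdot w_J=w_0$ close this gap, so the argument stands. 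Each approach has its virtue: the paper's wiring diagram makes the later rhombic-tiling picture of Section~\ref{Sec:4} (the rectangle glued to a standard and an anti-standard tiling) immediately visible, whereas your parabolic factorisation $w_0=w^J w_J$ is cleaner to write down and generalises transparently to other maximal parabolics.
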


\begin{proof}
The wiring diagram corresponding to this decomposition has the following form: the first $k$ wires go parallel to the NE direction until the wire labeled $k$ touches the ``roof'', and the wires $k+1, \cdots , n+1$ go parallel to the SE direction until the wire labeled $k+1$ touches the ``floor''. Then the first $k$ wires go to the east and each two of them cross follow the lexmax reduced decomposition of the longest permutation of $\mathfrak{S}_{k-1}$ in the alphabet $\{1,\cdots,k\}$, and the wires $k+1,\cdots , n+1$ got to the east and each two of them cross follow the lexmin reduced decomposition of the inverse permutations of $\mathfrak{S}_{n-k}$ in the alphabet
$\{k+1,\cdots,n+1\}$.

We see that the intersection of the first $k$ wires with the wires labeled by $k+1,\ldots, n+1$ form a rectangular being rotated around the corner corresponding to the intersection of the $k$ and $k+1$ wires.

Here is an example with $n=5$, $k=3$.

\vskip 15pt
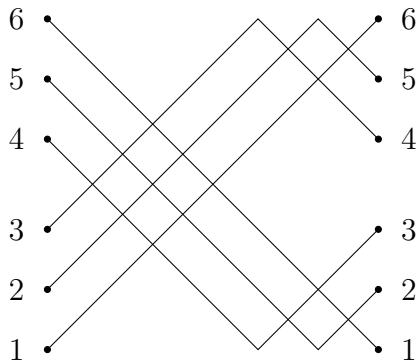
\begin{figure}[H]
\begin{tikzpicture}[scale=.4]
    \node[left] at (-0.4,0) {{$1$}};
    \node[left] at (-0.4,2) {{$2$}};
    \node[left] at (-0.4,4) {{$3$}};
    \node[left] at (-0.4,7) {{$4$}};
    \node[left] at (-0.4,9) {{$5$}};
    \node[left] at (-0.4,11) {{$6$}};
    \node[right] at (11.4,0) {{$1$}};
    \node[right] at (11.4,2) {{$2$}};
    \node[right] at (11.4,4) {{$3$}};
    \node[right] at (11.4,7) {{$4$}};
    \node[right] at (11.4,9) {{$5$}};
    \node[right] at (11.4,11) {{$6$}};
    
	\draw (0,0) -- (11,11);
	\draw (0,2) -- (9,11) -- (11,9);
	\draw (0,4) -- (7,11) -- (11,7);
	\draw (0,7) -- (7,0) -- (11,4);
	\draw (0,9) -- (9,0) -- (11,2);
	\draw (0,11) -- (11,0);
	
	\draw[fill] (0,0) circle [radius=0.1];
	\draw[fill] (0,2) circle [radius=0.1];
	\draw[fill] (0,4) circle [radius=0.1];
	\draw[fill] (0,7) circle [radius=0.1];
	\draw[fill] (0,9) circle [radius=0.1];
	\draw[fill] (0,11) circle [radius=0.1];
	\draw[fill] (11,0) circle [radius=0.1];
	\draw[fill] (11,2) circle [radius=0.1];
	\draw[fill] (11,4) circle [radius=0.1];
	\draw[fill] (11,7) circle [radius=0.1];
	\draw[fill] (11,9) circle [radius=0.1];
	\draw[fill] (11,11) circle [radius=0.1];	
\end{tikzpicture}
\caption{Wiring diagram for $n=5$ and $k=3$.}
\end{figure}
\end{proof}

We will denote $\bi^k$ the reduced word corresponding to the reduced decomposition of $w_0$ in the above lemma. For instance, when $n=3$ we have:

$$\bi^1=(1,2,3,1,2,1),\ \ \bi^2=(2,1,3,2,3,1),\ \ \bi^3=(3,2,1,3,2,3).$$

\begin{theorem}\label{Thm:Main}
For $\lambda=\lambda_1\varpi_1+\lambda_2\varpi_2+\cdots+\lambda_n\varpi_n\in\Lambda^+$, as polytopes in $\mathbb{R}^{\Delta_+}$,
$$\FFLV_n(\lambda)=\mathcal{L}_{\bi^1}(\lambda_1\varpi_1)+\mathcal{L}_{\bi^2}(\lambda_2\varpi_2)+\cdots+\mathcal{L}_{\bi^n}(\lambda_n\varpi_n).$$
Moreover, on the level of lattice points,
$$\FFLV_n(\lambda)_{\mathbb{Z}}=\mathcal{L}_{\bi^1}(\lambda_1\varpi_1)_{\mathbb{Z}}+\mathcal{L}_{\bi^2}(\lambda_2\varpi_2)_{\mathbb{Z}}+\cdots+\mathcal{L}_{\bi^n}(\lambda_n\varpi_n)_{\mathbb{Z}}.$$
\end{theorem}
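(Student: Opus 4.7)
The plan is to reduce the theorem to the case of a single fundamental weight and then invoke the birational sequence / essential basis framework of \cite{FaFoL}.

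By the Minkowski additivity of FFLV polytopes (Theorem \ref{Thm:FFL}(2), at both the polytope and the lattice-point level), one has
$$\FFLV_n(\lambda) = \FFLV_n(\lambda_1\varpi_1) + \cdots + \FFLV_n(\lambda_n\varpi_n),$$
together with the analogous identity for lattice points. Hence the theorem would follow from the single-weight identity
$$\FFLV_n(\lambda_k\varpi_k) = \mathcal{L}_{\bi^k}(\lambda_k\varpi_k) \qquad (k=1,\ldots,n),$$
once one knows that this holds also on the level of lattice points. A further reduction to $\lambda_k=1$ would be available as soon as one establishes a scaling/Minkowski property $\mathcal{L}_{\bi^k}(m\varpi_k) = m\,\mathcal{L}_{\bi^k}(\varpi_k)$ on the Lusztig side; this in turn follows from the tropicalised definition of Lusztig polytopes, or conceptually from the fact that the Lusztig polytope for a fundamental weight turns out to be a lattice polytope carrying its own Minkowski additivity.

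For the single-weight identity I would first localise the support. The Dyck path inequalities at $\varpi_k$ force $a_{\alpha_{s,t}}=0$ whenever $t<k$ or $s>k$, so $\FFLV_n(\varpi_k)$ sits inside the $k(n-k+1)$-dimensional coordinate subspace spanned by the roots $\alpha_{s,t}$ with $s\le k\le t$. The reduced decomposition $\bi^k$ is tailored precisely so that the first $k(n-k+1)$ roots $\beta_j^{\bi^k}$ produced by the block $\underline{w}_{k,n}$ are exactly these roots, while the tail blocks $S_{n-(k-1)+[k-1]}$ and $S_{[n-k]}$ produce root vectors living in the two Levi subalgebras that annihilate $v_{\varpi_k}$; hence the corresponding Lusztig coordinates are forced to zero on $\mathcal{L}_{\bi^k}(\varpi_k)$. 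Thus both polytopes lie inside the same coordinate subspace, and I can restrict attention to it.

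Inside that subspace I would realise both polytopes as essential polytopes associated to birational sequences on $V(\lambda_k\varpi_k)$, in the sense of \cite{FaFoL}. The canonical basis is essential for the Lusztig sequence $\underline{\beta}^{\bi^k}$ and produces $\mathcal{L}_{\bi^k}(\lambda_k\varpi_k)$, while the FFLV basis is essential for a Dyck-compatible total order and produces $\FFLV_n(\lambda_k\varpi_k)$. The identity then reduces to matching the two essential images as lattice sets, which can be done either by a triangular change of basis between the PBW monomials $f^{\ba}v_{\lambda_k\varpi_k}$ and the Lusztig monomials $F_{\bi^k}^{(\bm)}v^q_{\lambda_k\varpi_k}$ with common support, or by an explicit combinatorial bijection using the shape of the wiring diagram.

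The main obstacle I anticipate is exactly this final matching. Even after confining both polytopes to the same coordinate subspace, one still has to reconcile a tropically-defined object (whose facets are best read off from the rhombic tilings of \cite{GKS16, GKS19}) with the Dyck-path description of the FFLV polytope. Identifying each Dyck path with a specific path in the rhombic tiling of $\bi^k$ appears to be the combinatorial heart of the argument, and is what the alternative combinatorial proof advertised in the introduction should carry out. The representation-theoretic route partially sidesteps this via the essential-basis machinery, but still requires verifying that the specific wiring diagram of $\bi^k$ forces the FFLV monomial to appear as the leading $\bi^k$-PBW monomial — the step where the delicate choice of $\bi^k$ really earns its keep.
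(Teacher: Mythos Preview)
Your outline is correct and is precisely the paper's algebraic proof (Section~\ref{Sec:3}): Minkowski-reduce via Theorem~\ref{Thm:FFL}(2) to the single-weight identity $\FFLV_n(\varpi_k)=\mathcal{L}_{\bi^k}(\varpi_k)$, confine both polytopes to the coordinate subspace indexed by $\{\alpha_{i,j}:i\le k\le j\}$ using exactly the Levi-annihilation argument you describe, and identify $\mathcal{L}_{\bi^k}(\varpi_k)$ with the essential polytope $\Delta_{\varpi_k}(P_{\bi^k},>_{\mathrm{roplex}})$ via \cite{FaFoL}. For the step you flag as the obstacle, the paper's execution is more direct than a triangular base change: one takes Feigin's explicit list of lattice points $\bp_{j_1,\ldots,j_k}\in\FFLV_n(\varpi_k)_\mathbb{Z}$ and checks that among all monomials $f^{\ba}$ sending $e_1\wedge\cdots\wedge e_k$ to a multiple of $e_{j_1}\wedge\cdots\wedge e_{j_k}$ (these are parametrised by permutations $\sigma\in\mathfrak{S}_{k-s}$), the $>_{\mathrm{roplex}}$-minimal one is $\sigma=\mathrm{id}$, i.e.\ exactly $\bp_{j_1,\ldots,j_k}$ --- a one-line verification from the explicit order of $\underline{\beta}^{\bi^k}$ --- whence the lattice points agree, and polytope equality then follows from matching dimension and volume via the Weyl character formula.
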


\begin{remark}
When $\lambda=r\varpi_k$, Theorem \ref{Thm:Main} explains why in \cite{FF} we can get FFLV polytopes for multiples of fundamental weights from a particular chart of positive Grassmannians, although there is no connection known between FFLV bases and total positivity.
\end{remark}

In the next two sections, we will present two different proofs of this theorem:
\begin{enumerate}
\item The first representation-theoretic proof bases on realising both polytopes as Newton-Okounkov bodies. We will apply the representation-theoretic interpretation (essential monomials) of the lattice points in these Newton-Okounkov bodies given in \cite{FaFoL}. 
\item The second convex-geometric proof relies on an explicit description to the defining inequalities of the Lusztig polytope $\mathcal{L}_{\bi^k}(r\varpi_k)$ arising from an interplay of the crystal structure and the cluster structure \cite{GKS16}.
\end{enumerate}
These two proofs are in the dual position: the Newton-Okounkov theory describes lattice points in the polytope, while the cluster mirror duality gives its facets.

\section{Algebraic proof}\label{Sec:3}

\subsection{Birational sequences}

Let $S=(\beta_1,\cdots,\beta_N)$ with $\beta_i\in\Delta_+$ be a sequence of positive roots (repetitions allowed). It is called a \emph{birational sequence}, if the multiplication map 
$$U_{-\beta_1}\times U_{-\beta_2}\times \cdots \times U_{-\beta_N}\to U^-,$$
$$(u_1,u_2,\cdots,u_N)\mapsto u_1u_2\cdots u_N$$
is birational.

Let $>$ be a fixed total ordering on $\mathbb{N}^N$. We will associate to a birational sequence $S=(\beta_1,\cdots,\beta_N)$ and this total ordering a semigroup and a cone. By fixing these data one defines a filtration on $U(\mathfrak{n}^-)$ by setting for $\bm\in\mathbb{N}^N$,
$$U(\mathfrak{n}^-)_{\leq\bm}:=\mathrm{span}\{f^\ba:=f_{\beta_1}^{a_1}\cdots f_{\beta_N}^{a_N}\mid \ba\leq\bm\};$$
and similarly we have $U(\mathfrak{n}^-)_{<\bm}\subseteq U(\mathfrak{n}^-)_{\leq\bm}$. These filtrations on $U(\mathfrak{n}^-)$ induce filtrations on $V(\lambda)$ by requiring for $\bm\in\mathbb{N}^N$
$$V(\lambda)_{\leq \bm}:=U(\mathfrak{n}^-)_{\leq \bm}\cdot v_\lambda,\ \ V(\lambda)_{<\bm}:=U(\mathfrak{n}^-)_{<\bm}\cdot v_\lambda.$$

\begin{definition}[\cite{FaFoL}]\label{Defn:essential}
An element $\bm\in\mathbb{N}^N$ is called an \emph{essential exponent} with respect to $(S,>)$ if 
$$\dim\left(V(\lambda)_{\leq \bm}/V(\lambda)_{<\bm}\right)=1.$$
\end{definition}

The set of essential exponents in $V(\lambda)$ will be denoted by $\mathrm{es}_\lambda(S,>)$. We set 
$$\Gamma_\lambda(S,>):=\bigcup_{k\in\mathbb{N}}\{k\}\times\mathrm{es}_{k\lambda}(S,>)\subseteq\mathbb{N}\times\mathbb{N}^N,$$
and define the \emph{global essential monoid}
$$\Gamma(S,>):=\bigcup_{\lambda\in\Lambda^+}\{\lambda\}\times\mathrm{es}_\lambda(S,>)\subseteq\Lambda^+\times\mathbb{N}^N.$$

\begin{proposition}[\cite{FaFoL}]
With the induced structures from $\mathbb{N}\times\mathbb{N}^N$ and $\Lambda^+\times\mathbb{N}^N$, the sets $\Gamma_\lambda(S,>)$ and $\Gamma(S,>)$ are monoids. 
\end{proposition}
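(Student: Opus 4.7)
The plan is to prove closure under addition for $\Gamma(S,>)$; the statement for $\Gamma_\lambda(S,>)$ then follows by restricting to multiples of a fixed dominant weight, and the unit $(0,\mathbf{0})$ in either monoid is witnessed by the trivial representation $V(0)$. Concretely, we need to show that if $\bm\in\mathrm{es}_\lambda(S,>)$ and $\bn\in\mathrm{es}_\mu(S,>)$ then $\bm+\bn\in\mathrm{es}_{\lambda+\mu}(S,>)$. The strategy is to realise the sum exponent inside $V(\lambda)\otimes V(\mu)$ via the Cartan embedding $V(\lambda+\mu)\hookrightarrow V(\lambda)\otimes V(\mu)$, $v_{\lambda+\mu}\mapsto v_\lambda\otimes v_\mu$, and track the filtrations there.

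The first computation is a binomial coproduct. Since each $f_\beta$ is primitive in $U(\g)$, a short induction on monomial length yields
\[
\Delta(f^{\mathbf{k}})=\sum_{\ba+\bb=\mathbf{k}}\binom{\mathbf{k}}{\ba}f^{\ba}\otimes f^{\bb},\qquad \binom{\mathbf{k}}{\ba}:=\prod_i\binom{k_i}{a_i}.
\]
Applied to $v_\lambda\otimes v_\mu$, this places the image of $f^{\mathbf{k}}v_{\lambda+\mu}$ inside the diagonal tensor filtration $\mathcal{F}^{\otimes}_{\mathbf{k}}:=\sum_{\ba+\bb\leq\mathbf{k}}V(\lambda)_{\leq\ba}\otimes V(\mu)_{\leq\bb}$. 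This filtration is well-defined because $>_{\mathrm{oplex}}$ and $>_{\mathrm{roplex}}$ are strictly monotone under addition, and the same property ensures that the Cartan embedding carries $V(\lambda+\mu)_{\leq\mathbf{k}}$ into $\mathcal{F}^{\otimes}_{\mathbf{k}}$ and $V(\lambda+\mu)_{<\mathbf{k}}$ into $\mathcal{F}^{\otimes}_{<\mathbf{k}}$.

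Passing to the associated graded gives a decomposition
\[
\mathcal{F}^{\otimes}_{\mathbf{k}}/\mathcal{F}^{\otimes}_{<\mathbf{k}}\ \cong\ \bigoplus_{\ba+\bb=\mathbf{k}}\mathrm{gr}_{\ba}V(\lambda)\otimes\mathrm{gr}_{\bb}V(\mu),
\]
each summand being at most one-dimensional and nonzero precisely when $\ba\in\mathrm{es}_\lambda(S,>)$ and $\bb\in\mathrm{es}_\mu(S,>)$. Under this identification the class of $f^{\bm+\bn}v_{\lambda+\mu}$ becomes $\sum_{\ba+\bb=\bm+\bn}\binom{\bm+\bn}{\ba}[f^{\ba}v_\lambda]\otimes[f^{\bb}v_\mu]$. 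The $(\bm,\bn)$-summand is nonzero by hypothesis and it lives in its own factor of the direct sum, so the whole expression is nonzero. Consequently $f^{\bm+\bn}v_{\lambda+\mu}\notin V(\lambda+\mu)_{<\bm+\bn}$, and since the quotient $V(\lambda+\mu)_{\leq\bm+\bn}/V(\lambda+\mu)_{<\bm+\bn}$ is by definition spanned by this single class, it must be exactly one-dimensional, proving $\bm+\bn\in\mathrm{es}_{\lambda+\mu}(S,>)$.

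The main obstacle is establishing the direct-sum decomposition of the associated graded. A convenient route is to use the essential exponents themselves: they yield a basis of $V(\lambda)$ by PBW monomials $\{f^{\ba}v_\lambda:\ba\in\mathrm{es}_\lambda(S,>)\}$ that is strictly compatible with the filtration (the jumps of $V(\lambda)_{\leq\ba}$ occur exactly at essential exponents), and likewise for $V(\mu)$. Tensoring these two bases produces an explicit basis of $\mathcal{F}^{\otimes}_{\mathbf{k}}$ indexed by pairs $(\ba,\bb)\in\mathrm{es}_\lambda(S,>)\times\mathrm{es}_\mu(S,>)$ with $\ba+\bb\leq\mathbf{k}$, from which both the direct-sum description and the nonvanishing of the target class become immediate. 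Everything else is formal manipulation of coproducts and induced filtrations.
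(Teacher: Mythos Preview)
The paper does not prove this proposition; it is quoted from \cite{FaFoL} without argument. Your proof via the Cartan embedding $V(\lambda+\mu)\hookrightarrow V(\lambda)\otimes V(\mu)$, the binomial coproduct formula for $\Delta(f^{\mathbf{k}})$, and tracking the tensor filtration against essential bases is correct and is in fact the standard argument (essentially the one given in \cite{FaFoL}).

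One small presentational point: you justify monotonicity of the tensor filtration by invoking $>_{\mathrm{oplex}}$ and $>_{\mathrm{roplex}}$, but the proposition is stated for an arbitrary total order $>$. What you actually use is that $>$ is compatible with addition (i.e.\ $\ba<\ba'$ implies $\ba+\bc<\ba'+\bc$); this is a standing hypothesis in \cite{FaFoL} (the order is required to be a monomial order refining a $\Psi$-weighted degree) that the present paper suppresses. It would be cleaner to state this property once and appeal to it, rather than to name two particular orderings.
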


\subsection{Realisation of polytopes}

We set $\Lambda:=\mathbb{Z}\varpi_1+\cdots+\mathbb{Z}\varpi_n$ and $\Lambda_\mathbb{R}:=\Lambda\ts_{\mathbb{Z}}\mathbb{R}$. Let 
$$C_\lambda(S,>)\subseteq\mathbb{R}\times\mathbb{R}^N\text{  and  }C(S,>)\subseteq\Lambda_\mathbb{R}\times\mathbb{R}^N$$
be the cones generated by the sets $\Gamma_\lambda(S,>)$ and $\Gamma(S,>)$, respectively. By cutting the cone $C_\lambda(S,>)$ we obtain a convex body
$$\Delta_\lambda(S,>):=(\{1\}\times\mathbb{R}^N)\cap C_\lambda(S,>),$$
called the Newton-Okounkov body associated to $(S,>)$ and the weight $\lambda$. It is shown in \cite{FaFoL} that $\Delta_\lambda(S,>)$ coincides with the Newton-Okounkov body associated to a valuation.

We provide some examples of this construction. A reduced word $\bi=(i_1,\cdots,i_N)\in R(w_0)$ gives a birational sequence
$$P_\bi=(\beta_1^\bi,\cdots,\beta_N^\bi).$$

We fix the right opposite lexicographic ordering $>_{\mathrm{roplex}}$ on $\mathbb{Z}^N$. The following theorem identifies the Newton-Okounkov bodies associated to $(P_\bi,>_{\mathrm{roplex}})$ with the known polytopes.

By fixing the enumeration $\beta_1^\bi,\cdots,\beta_N^\bi$ of $\Delta_+$, we identify $\mathbb{R}^{\Delta_+}$ with $\mathbb{R}^N$.

\begin{theorem}[\cite{FaFoL}]\label{Thm:Lusroplex}
We have $\Delta_\lambda(P_\bi,>_{\mathrm{roplex}})=\mathcal{L}_\bi(\lambda).$
\end{theorem}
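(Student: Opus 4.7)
My plan is to compare the two polytopes first at the level of lattice points for every scaled weight $k\lambda$, and then pass to the polytope identity via the Newton--Okounkov cone construction. For each $\lambda\in\Lambda^+$ I would first establish
$$\mathrm{es}_\lambda(P_\bi,>_{\mathrm{roplex}})=\mathcal{L}_\bi(\lambda)_{\mathbb{Z}}.$$
Both sides have cardinality $\dim V(\lambda)$: the right-hand side by Theorem \ref{Thm:canonical}(3), and the left-hand side because each strict jump of the filtration $\{V(\lambda)_{\leq\bm}\}$ on the finite-dimensional space $V(\lambda)$ is of size exactly one (only $f^\bm\cdot v_\lambda$ is added at step $\bm$) and, by Definition \ref{Defn:essential}, is an essential exponent. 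Hence it suffices to prove one inclusion.

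For the inclusion $\mathrm{es}_\lambda\subseteq\mathcal{L}_\bi(\lambda)_{\mathbb{Z}}$ I would take $\bm\in\mathrm{es}_\lambda$ and use Theorem \ref{Thm:canonical}(1) to write
$$b_\bi(\bm)=F_\bi^{(\bm)}+\sum_{\bn\prec\bm}\lambda_\bm^\bn F_\bi^{(\bn)}.$$
Each quantum PBW divided power $F_{\beta_k^\bi}^{(m_k)}$ specialises at $q=1$ to (a sign times) the classical divided power $f_{\beta_k^\bi}^{m_k}/m_k!$, so after clearing denominators the identity becomes, in $U(\mathfrak{n}^-)$,
$$\bm!\cdot b_\bi(\bm)|_{q=1}=\pm f^\bm+\sum_{\bn\prec\bm}c_{\bn}\,f^\bn.$$
The crucial combinatorial input is that $\prec$ refines $<_{\mathrm{roplex}}$: $\bn\prec\bm$ implies $\bn<_{\mathrm{roplex}}\bm$, so each correction term $f^\bn\cdot v_\lambda$ lies in $V(\lambda)_{<\bm}$. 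Consequently
$$\bm!\cdot b_\bi(\bm)|_{q=1}\cdot v_\lambda\equiv \pm f^\bm\cdot v_\lambda\pmod{V(\lambda)_{<\bm}},$$
and essentiality of $\bm$ gives $f^\bm\cdot v_\lambda\notin V(\lambda)_{<\bm}$, so $b_\bi(\bm)|_{q=1}\cdot v_\lambda\neq 0$ in $V(\lambda)$. Since the canonical basis lies in Lusztig's integral form and its $q=1$ specialisation sends $\mathbf{B}(\lambda)$ injectively into $V(\lambda)$, this upgrades to $b_\bi(\bm)\cdot v_\lambda^q\neq 0$, i.e., $\bm\in\mathcal{L}_\bi(\lambda)_{\mathbb{Z}}$. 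Combined with the cardinality match this completes the lattice-point equality.

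To translate the lattice-point equality into the polytope equality, observe that $\Delta_\lambda(P_\bi,>_{\mathrm{roplex}})$ is by definition the level-$1$ slice of the cone generated by $\bigcup_{k\geq 1}\{k\}\times\mathrm{es}_{k\lambda}(P_\bi,>_{\mathrm{roplex}})$, while the Lusztig polytope, defined by Berenstein--Zelevinsky inequalities linear in the weight, satisfies $\mathcal{L}_\bi(k\lambda)=k\,\mathcal{L}_\bi(\lambda)$, so its own cone is generated by $\bigcup_{k\geq 1}\{k\}\times\mathcal{L}_\bi(k\lambda)_{\mathbb{Z}}$. Applying the lattice-point equality at every $k\lambda$ makes the two cones coincide (rational points of $\mathcal{L}_\bi(\lambda)$ are dense, so lattice points at all scales determine the polytope), and slicing at level $1$ yields the identity $\Delta_\lambda(P_\bi,>_{\mathrm{roplex}})=\mathcal{L}_\bi(\lambda)$.

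The main obstacle is the passage in Step~2 from the classical nonvanishing of $b_\bi(\bm)|_{q=1}\cdot v_\lambda$ to the quantum nonvanishing $b_\bi(\bm)\cdot v_\lambda^q\neq 0$; this requires the Kashiwara/Lusztig integral structures on $V_q(\lambda)$ together with compatibility of the specialisation map with the canonical basis decomposition. The conceptual engine of the whole argument, however, is the elementary compatibility $\bn\prec\bm\Rightarrow\bn<_{\mathrm{roplex}}\bm$: it is precisely what turns the canonical-basis expansion of Theorem \ref{Thm:canonical}(1) into a leading-exponent statement for the valuation underlying the Newton--Okounkov body, thereby identifying the Lusztig parametrisation with the essential-monomial parametrisation.
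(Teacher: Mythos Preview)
The paper does not prove Theorem~\ref{Thm:Lusroplex}; it is quoted from \cite{FaFoL} without argument, so there is no in-paper proof to compare your proposal against.

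That said, your sketch is a sound reconstruction of how such a statement is established. A few remarks. First, the ``crucial combinatorial input'' $\bn\prec\bm\Rightarrow\bn<_{\mathrm{roplex}}\bm$ is immediate from the paper's definition of $\succ$ as the intersection of the two orders, so it needs no separate justification. Second, the specialisation step you flag as the ``main obstacle'' is lighter than you suggest: one only needs the contrapositive, namely that $b\cdot v_\lambda^q=0$ in $V_q(\lambda)$ forces $b|_{q=1}\cdot v_\lambda=0$ in $V(\lambda)$, and this is a direct consequence of the compatibility of Lusztig's $\mathbb{Z}[q,q^{-1}]$-form of $V_q(\lambda)$ with the classical module (no injectivity of specialisation on $\mathbf{B}(\lambda)$ is required for this direction). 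Third, when you assert that $F_{\beta_k^\bi}^{(m_k)}$ specialises to a nonzero scalar times $f_{\beta_k}^{m_k}/m_k!$, note that the paper's $f_\beta$ are only fixed up to scalar, so the matching is automatic once one knows the specialised quantum root vector is nonzero; in type $\mathsf{A}$ this follows from the explicit form of $T_i(F_j)$. Finally, the passage from lattice points at all dilates $k\lambda$ to the polytope equality is exactly the mechanism built into the Newton--Okounkov construction of \cite{FaFoL}, and your use of the linearity $\mathcal{L}_\bi(k\lambda)=k\,\mathcal{L}_\bi(\lambda)$ (which follows from the Berenstein--Zelevinsky description) is the right ingredient on the Lusztig side.
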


\subsection{Proof of Theorem}

We start with showing some properties of the reduced word $\bi^k$. Let $(\beta_1^{\bi^k},\beta_2^{\bi^k},\cdots,\beta_N^{\bi^k})$ be the enumeration of positive roots associated to $\bi^k$.

For a positive root $\alpha_{i,j}\in\Delta_+$, we set $\mathrm{ht}(\alpha_{i,j})=j-i+1$ to be the height of the root.

\begin{lemma}\label{Lem:betaik}
The following statements hold:
\begin{enumerate}
\item We have $\{\beta_1^{\bi^k},\cdots,\beta_{k(n-k+1)}^{\bi^k}\}=\{\alpha_{i,j}\mid 1\leq i\leq k\leq j\leq n\}$.
\item We have $\{\beta_{k(n-k+1)+1}^{\bi^k},\cdots,\beta_N^{\bi^k}\}=\{\alpha_{i,j}\mid k\notin [i,j]\}$.
\item We denote $\beta_s^{\bi^k}=\alpha_{p_1,q_1}$ and $\beta_t^{\bi^k}=\alpha_{p_2,q_2}$. If $s<t\leq k$, then either $q_1>q_2$ or $q_1=q_2$, $p_2<p_1$.
\end{enumerate}
\end{lemma}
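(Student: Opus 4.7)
My plan is to identify the positive root at each position in $\bi^k$ via the standard principle that for any reduced word $(i_1,\ldots,i_\ell)$ of $w\in W$, the resulting set of roots $\{s_{i_1}\cdots s_{i_{j-1}}(\alpha_{i_j}) \mid 1\leq j\leq \ell\}$ coincides with the inversion set $N(w^{-1}) := \{\alpha\in\Delta_+ \mid w^{-1}(\alpha)\in-\Delta_+\}$. Applied to the factorisation $\bi^k = \underline{w}_{k,n}\cdot S_{n-(k-1)+[k-1]}\cdot S_{[n-k]}$, this reduces the whole lemma to a computation of $N(w_{k,n}^{-1})$ together with an inspection of the first block of $\underline{w}_{k,n}$ for part (3).

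For (1), I would first identify $w_{k,n}$, under the standard isomorphism $W\cong\mathfrak{S}_{n+1}$ acting on coordinate vectors $\epsilon_1,\ldots,\epsilon_{n+1}$ with $\alpha_{a,b}=\epsilon_a-\epsilon_{b+1}$, with the cyclic permutation $i\mapsto i+k \pmod{n+1}$. This can be read directly off the wiring diagram in the preceding lemma (the first $k$ wires shift upward by $n-k+1$ positions, the remaining $n-k+1$ wires downward by $k$ positions), and is also easy to verify by a short block-by-block induction. Distinguishing whether $a$ and $b+1$ lie in $\{1,\ldots,k\}$ or in $\{k+1,\ldots,n+1\}$, a short case analysis shows that $w_{k,n}^{-1}(\alpha_{a,b})\in-\Delta_+$ exactly when $a\leq k\leq b$. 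Hence $\{\beta_1^{\bi^k},\ldots,\beta_{k(n-k+1)}^{\bi^k}\} = \{\alpha_{i,j}\mid 1\leq i\leq k\leq j\leq n\}$, whose cardinality $k(n-k+1)$ matches $\ell(w_{k,n})$, giving (1).

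Statement (2) is then automatic: since $\bi^k\in R(w_0)$, the full family $\{\beta_1^{\bi^k},\ldots,\beta_N^{\bi^k}\}$ exhausts $\Delta_+$, so the tail must be the complement of the set in (1), which is precisely $\{\alpha_{i,j}\mid k\notin[i,j]\}$. As a cross-check, one can observe that $S_{n-(k-1)+[k-1]}^{-1}$ and $S_{[n-k]}^{-1}$ are reduced words for the longest elements of the commuting parabolics $\langle s_{n-k+2},\ldots,s_n\rangle$ and $\langle s_1,\ldots,s_{n-k}\rangle$; applying $w_{k,n}$ then maps the positive roots of these parabolics bijectively onto $\{\alpha_{i,j}\mid 1\leq i\leq j\leq k-1\}$ and $\{\alpha_{i,j}\mid k+1\leq i\leq j\leq n\}$ respectively, reproducing the same complementary family.

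For (3), only the first block $s_k s_{k-1}\cdots s_1$ of $\underline{w}_{k,n}$ is relevant since $s<t\leq k$. A direct computation using standard formulas for the action of simple reflections on $\alpha_{p,q}$ yields $\beta_s^{\bi^k} = s_k s_{k-1}\cdots s_{k-s+2}(\alpha_{k-s+1}) = \alpha_{k-s+1,k}$ for $s=1,\ldots,k$. Therefore, for $s<t\leq k$ one has $q_1=q_2=k$ and $p_1=k-s+1>k-t+1=p_2$, which falls into the second alternative. The main obstacle throughout is just the combinatorial identification of $w_{k,n}$ as the cyclic shift; once this is secured, each of (1), (2) and (3) reduces to elementary bookkeeping.
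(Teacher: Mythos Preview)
Your argument is correct, but it follows a different path from the paper's. The paper's entire proof consists of a single line: it writes down the full ordered list
\[
(\beta_1^{\bi^k},\ldots,\beta_{k(n-k+1)}^{\bi^k})=(\alpha_{k,k},\alpha_{k-1,k},\ldots,\alpha_{1,k},\alpha_{k,k+1},\ldots,\alpha_{1,k+1},\ldots,\alpha_{k,n},\ldots,\alpha_{1,n}),
\]
from which all three statements are read off immediately. Your route is more structural: you identify $w_{k,n}$ as the cyclic shift $i\mapsto i+k$ on $\{1,\ldots,n+1\}$, compute its inversion set to obtain (1) as an unordered set, deduce (2) by complementation inside $\Delta_+$, and then handle (3) by computing only the first $k$ roots explicitly. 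Both arguments are elementary; the paper's has the advantage of giving the complete ordering of the first $k(n-k+1)$ roots in one stroke (which is implicitly used later in the proof of Proposition~\ref{Prop:fundamental}), whereas your approach isolates the underlying reason---the inversion set of a cyclic permutation---more transparently and makes the cross-check for (2) via the two commuting parabolics quite natural. One small remark: your reading of the wiring diagram (``first $k$ wires shift upward by $n-k+1$'') actually describes $w_{k,n}^{-1}$ rather than $w_{k,n}$, since reading a wiring diagram left to right tracks the inverse permutation; your formula $w_{k,n}(i)=i+k$ is nonetheless correct, as your alternative block-by-block verification confirms.
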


\begin{proof}
It suffices to notice that by definition, $(\beta_1^{\bi^k},\beta_2^{\bi^k},\cdots,\beta_{k(n-k+1)}^{\bi^k})$ is
$$(\alpha_{k,k},\alpha_{k-1,k},\cdots,\alpha_{1,k},\alpha_{k,k+1},\cdots,\alpha_{1,k+1},\cdots,\alpha_{k,n},\cdots,\alpha_{1,n}).$$
\end{proof}

We first consider the case of a fundamental weight.

\begin{proposition}\label{Prop:fundamental}
We have $\FFLV_n(\varpi_k)=\mathcal{L}_{\bi^k}(\varpi_k)$.
\end{proposition}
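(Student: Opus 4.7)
The plan is to show both $\FFLV_n(\varpi_k)$ and $\mathcal{L}_{\bi^k}(\varpi_k)$ are $0/1$ lattice polytopes supported on $\{\alpha_{p,q}\,:\,p\leq k\leq q\}\subseteq\Delta_+$ with identical lattice point sets, and then conclude equality via convex hulls. For $\FFLV_n(\varpi_k)$, applying the Dyck inequalities to the trivial paths $(\alpha_{p,q})\in\mathbb{P}_{p,q}$ gives $a_{\alpha_{p,q}}\leq\lambda_p+\cdots+\lambda_q$, which equals $1$ when $p\leq k\leq q$ and $0$ otherwise. Together with $a_\gamma\geq 0$, this forces the polytope into the cube $[0,1]^{\{\alpha_{p,q}\,:\,p\leq k\leq q\}}$, vanishing elsewhere, with lattice points exactly the indicator vectors of Dyck antichains in this strip. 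By Theorem~\ref{Thm:FFL} there are $\binom{n+1}{k}=\dim V(\varpi_k)$ of them and $\FFLV_n(\varpi_k)$ is their convex hull.

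For $\mathcal{L}_{\bi^k}(\varpi_k)$, I would invoke Theorem~\ref{Thm:Lusroplex} to realize it as the Newton--Okounkov body $\Delta_{\varpi_k}(P_{\bi^k},>_{\mathrm{roplex}})$, whose lattice points are the essential exponents of $V(\varpi_k)$ with respect to $(P_{\bi^k},>_{\mathrm{roplex}})$. Three consequences of $V(\varpi_k)\cong\Lambda^k\mathbb{C}^{n+1}$ being minuscule pin these down. First, for every $\beta\in\Delta_+$, $f_\beta^2$ acts as zero on $V(\varpi_k)$: writing $\beta=\alpha_{i,j}$, the operator $f_\beta$ acts on each wedge monomial by performing the substitution $e_i\mapsto e_{j+1}$ at most once. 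Hence essential exponents lie in $\{0,1\}^N$. Second, by Lemma~\ref{Lem:betaik}(2), the roots $\beta_j^{\bi^k}$ for $j>k(n-k+1)$ annihilate $v_{\varpi_k}$, and processing $f^\bm v_{\varpi_k}$ from right to left forces $m_j=0$ in these positions. Third, one-dimensionality of weight spaces gives at most one essential exponent per weight, yielding exactly $\binom{n+1}{k}$ of them. So $\mathcal{L}_{\bi^k}(\varpi_k)$ is a $0/1$ lattice polytope with the same support and lattice point count as $\FFLV_n(\varpi_k)$.

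The heart of the proof is an exchange lemma matching the two sets of $0/1$ patterns. Essentiality of $\bm$ is equivalent to $\bm$ being the $<_{\mathrm{roplex}}$-minimum in its weight class among $0/1$ exponents with $f^\bm v_{\varpi_k}\neq 0$. Suppose $\bm$ contains a Dyck violation: two roots $\alpha_{i,j'},\alpha_{i',j}$ in its support with $i<i'$ and $j'<j$, which necessarily lie on a common Dyck path. Replacing this pair by $(\alpha_{i',j'},\alpha_{i,j})$ preserves both the weight and the nonzero action on $\Lambda^k\mathbb{C}^{n+1}$. A direct position-count in the enumeration from Lemma~\ref{Lem:betaik}---columns indexed by $j$ are read in increasing order of $j$, and within each column $i$ decreases---places $\alpha_{i,j}$ at the latest of the four positions, so introducing a $1$ there produces a strictly $<_{\mathrm{roplex}}$-smaller exponent. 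Iterating, the $<_{\mathrm{roplex}}$-minimum in each weight class is precisely the Dyck antichain representative, matching lattice points of the two polytopes as subsets of $\{0,1\}^{\Delta_+}$. Since both polytopes are the convex hulls of their $0/1$ vertices (FFLV by Theorem~\ref{Thm:FFL}; Lusztig by the $0/1$ character established above), the proposition follows.

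The main obstacle is the exchange lemma itself: its intuitive content---that Dyck antichains are ``minimal''---must be matched carefully to the precise $<_{\mathrm{roplex}}$ comparison, which depends delicately on the positions of the four roots in $\bi^k$. The specific ``rectangular'' enumeration of $\bi^k$ from Lemma~\ref{Lem:betaik} is calibrated so that the long root $\alpha_{i,j}$ ends up latest, making the exchange indeed decrease $<_{\mathrm{roplex}}$. This is what distinguishes $\bi^k$ among all reduced words for $w_0$ and is the combinatorial heart of the proposition.
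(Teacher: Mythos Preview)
Your approach is essentially the paper's: both identify the lattice points of $\mathcal{L}_{\bi^k}(\varpi_k)$ with the Dyck antichains by showing these are the $>_{\mathrm{roplex}}$-minimal $0/1$ exponents with nonzero action in each weight space of $V(\varpi_k)$. Your exchange lemma is a bubble-sort reformulation of the paper's argument that among the monomials in \eqref{Eq:1} indexed by $\sigma\in\mathfrak{S}_{k-s}$, the $>_{\mathrm{roplex}}$-minimum is $\sigma=\mathrm{id}$; your swap $(\alpha_{i,j'},\alpha_{i',j})\rightsquigarrow(\alpha_{i',j'},\alpha_{i,j})$ is exactly one inversion-removing transposition on $\sigma$. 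To make the claim ``preserves nonzero action'' airtight you should note, as the paper does via \eqref{Eq:1}, that for a fixed weight the nonzero $0/1$ exponents are precisely the matchings indexed by $\mathfrak{S}_{k-s}$ (distinct first indices, distinct second indices), so the swap stays inside this set and in particular $\alpha_{i',j'},\alpha_{i,j}$ were not already in the support.

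There is, however, a genuine gap in your last step. You conclude the polytopes coincide because ``both are convex hulls of their $0/1$ vertices'', citing for the Lusztig side ``the $0/1$ character established above''. But you only showed that the \emph{lattice points} of $\mathcal{L}_{\bi^k}(\varpi_k)$ are $0/1$; Lusztig polytopes are a priori only rational, and a rational polytope whose lattice points happen to lie in $\{0,1\}^N$ need not be their convex hull. The paper closes this differently: from $\FFLV_n(\varpi_k)_{\mathbb Z}=\mathcal{L}_{\bi^k}(\varpi_k)_{\mathbb Z}$ and the fact that $\FFLV_n(\varpi_k)$ is a lattice polytope one gets $\FFLV_n(\varpi_k)\subseteq\mathcal{L}_{\bi^k}(\varpi_k)$; equality then follows because the two polytopes have the same dimension and, by the Weyl character formula, the same volume. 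You need this or an equivalent argument to finish.
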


We need some preparations for the proof:

\begin{lemma}\label{Lem:zeros}
For $\ba\in\mathcal{L}_{\bi^k}(\varpi_k)$, if $\ell>k(n-k+1)$, then $a_{\beta_\ell^{\bi^k}}=0$.
\end{lemma}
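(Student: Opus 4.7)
The plan is to realise $\mathcal{L}_{\bi^k}(\varpi_k)$ as the Newton-Okounkov body $\Delta_{\varpi_k}(P_{\bi^k},>_{\mathrm{roplex}})$ via Theorem \ref{Thm:Lusroplex}, and then to reduce the desired vanishing to a claim about essential exponents. More precisely, I would show that for every integer $m\geq 1$ and every $\bm\in\mathrm{es}_{m\varpi_k}(P_{\bi^k},>_{\mathrm{roplex}})$, one has $m_\ell=0$ whenever $\ell>k(n-k+1)$. Because $\mathcal{L}_{\bi^k}(\varpi_k)$ is the height-one slice of the cone generated by $\bigcup_{m\geq 1}\{m\}\times\mathrm{es}_{m\varpi_k}$, this coordinate vanishing on the essential set propagates to the full cone, and hence to every point of the polytope, not merely to its lattice points.

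The key algebraic input is that for $\lambda=m\varpi_k$ and any positive root $\beta$ whose support does not contain $\alpha_k$, one has $f_\beta\cdot v_\lambda=0$. Indeed, in type $\tt A_n$ the root vector $f_\beta$ is (up to sign) an iterated Lie bracket of simple root vectors from its support, hence lies in the subalgebra $U(\mathfrak{n}^-_{\hat k})\subseteq U(\n^-)$ generated by $\{f_\ell:\ell\neq k\}$. Since $\langle m\varpi_k,\alpha_\ell^\vee\rangle=0$ for every $\ell\neq k$, each such generator kills $v_{m\varpi_k}$, so $U(\mathfrak{n}^-_{\hat k})\cdot v_{m\varpi_k}=\mathbb{C}v_{m\varpi_k}$ and the whole augmentation ideal of $U(\mathfrak{n}^-_{\hat k})$ annihilates $v_{m\varpi_k}$. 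Invoking Lemma \ref{Lem:betaik}(2), which identifies the roots $\beta_\ell^{\bi^k}$ with $\ell>k(n-k+1)$ as exactly those positive roots whose support omits $\alpha_k$, I would then argue as follows: if $\bm$ has some $m_{\ell_0}\geq 1$ with $\ell_0>k(n-k+1)$, the right-hand tail $f_{\beta_{k(n-k+1)+1}^{\bi^k}}^{m_{k(n-k+1)+1}}\cdots f_{\beta_N^{\bi^k}}^{m_N}$ of the PBW monomial $f^\bm$ lies in the augmentation ideal of $U(\mathfrak{n}^-_{\hat k})$, so it annihilates $v_{m\varpi_k}$ and therefore $f^\bm\cdot v_{m\varpi_k}=0$.

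Since $>_{\mathrm{roplex}}$ is a total order, the quotient $V(m\varpi_k)_{\leq\bm}/V(m\varpi_k)_{<\bm}$ is spanned by the class of $f^\bm\cdot v_{m\varpi_k}$, and hence is zero as soon as this vector vanishes. Thus such a $\bm$ cannot be essential, which produces the reduction described in the first paragraph and finishes the proof. The delicate point that I expect to need the most care is the one invoked at the start of the second paragraph: verifying that the type $\tt A_n$ root vector $f_\beta$ indeed lies in $U(\mathfrak{n}^-_{\hat k})$ whenever $\alpha_k$ is outside the support of $\beta$. This is standard from the explicit bracket description of $f_\beta$, but it is also the step that ties the argument to type $\tt A$; in other types the analogous statement would have to be revisited.
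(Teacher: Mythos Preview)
Your argument is correct and follows essentially the same route as the paper: both use Lemma~\ref{Lem:betaik}(2) to identify the tail roots as those omitting $\alpha_k$ from their support, and then observe that the corresponding root vectors annihilate the highest weight vector, so no essential exponent can be supported there. Your write-up is considerably more careful than the paper's one-line proof, in particular in spelling out why the vanishing on $\mathrm{es}_{m\varpi_k}$ for all $m\geq 1$ propagates through the cone to every real point of $\mathcal{L}_{\bi^k}(\varpi_k)=\Delta_{\varpi_k}(P_{\bi^k},>_{\mathrm{roplex}})$, a step the paper leaves implicit.
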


\begin{proof}
This holds by the definition of the filtration on $V(\varpi_k)$. According to Lemma \ref{Lem:betaik} (2), if $\ell>k(n-k+1)$ then $f_{\beta_\ell^{\bi^k}}$ acts by zero on the highest weight vector $v_{\varpi_k}$.
\end{proof}

We recall the description of the lattice points in the FFLV polytope in \cite{Fei, FaFoR}.

As representations of $\g$, we have $V(\varpi_k)\cong\Lambda^k\mathbb{C}^{n+1}$. We fix the standard basis $\{e_1,\cdots,e_{n+1}\}$ of $\mathbb{C}^{n+1}$. Then $V(\varpi_k)$ has a linear basis 
$$\{e_{j_1}\wedge\cdots \wedge e_{j_k}\mid 1\leq j_1<j_2<\cdots<j_k\leq n+1\},$$
where $e_1\wedge\cdots\wedge e_k$ is a highest weight vector.
We fix the index $s$ satisfying
$$1\leq j_1<\cdots<j_s\leq k< j_{s+1}<\cdots<j_k\leq n+1,$$
and set $\{p_1,\cdots,p_{k-s}\}=\{1,\cdots,k\}\setminus\{j_1,\cdots,j_s\}$ with the ordering $p_1<\cdots<p_{k-s}$.

For $\ba\in\mathbb{N}^{\Delta_+}$, we will denote $a_\ell^k:=a_{\beta_\ell^{\bi^k}}$. If $\ba$ satisfies for any $\ell>k(n-k+1)$, $a_\ell^k=0$, we will write the monomial 
$$f^{\ba}:=f_{\beta_1^{\bi^k}}^{a_1^k}f_{\beta_2^{\bi^k}}^{a_2^k}\cdots f_{\beta_k^{\bi^k}}^{a_k^k}.$$
Note that this monomial does not depend on the order of the root vectors $f_\beta$ it contains. We will write $f_{i,j}:=f_{\alpha_{i,j}}$ for short.

A monomial $f^\ba$ satisfies 
$$f^\ba\cdot e_1\wedge\cdots\wedge e_k\text{ is proportional to }e_{j_1}\wedge\cdots\wedge e_{j_k}$$ 
if and only if it has the form
\begin{equation}\label{Eq:1}
f_{p_{\sigma(1)},j_k-1}f_{p_{\sigma(2)},j_{k-1}-1}\cdots f_{p_{\sigma(k-s)},j_{s+1}-1}
\end{equation}
for some $\sigma\in\mathfrak{S}_{k-s}$.

The point in $\mathrm{FFLV}_n(\varpi_k)$ corresponding to the basis $e_{j_1}\wedge\cdots\wedge e_{j_k}$ is given by the function $\bp_{j_1,\cdots,j_k}$ defined by:
$$\bp_{j_1,\cdots,j_k}(\alpha_{i,j}):=\begin{cases}1, &\text{if }(i,j)=(p_r,j_{k-r+1}-1)\text{ for some }r\in [1,k-s]; \\ 0, & \text{otherwise}.\end{cases}$$
It corresponds to the case $\sigma=\mathrm{id}$ in \eqref{Eq:1}.

\begin{remark}
Such a function $\bp_{j_1,\cdots,j_k}$ corresponds to corners of a path from $\alpha_{1,k}$ to $\alpha_{k,n}$ in the rectangular consisting of roots in Lemma \ref{Lem:betaik} (1). 
\end{remark}

\begin{proposition}[\cite{Fei}]\label{Prop:PointsFFLV}
We have $\mathrm{FFLV}_n(\varpi_k)_{\mathbb{Z}}=\{\bp_{j_1,\cdots,j_k}\mid 1\leq j_1<\cdots<j_k\leq n+1\}.$
\end{proposition}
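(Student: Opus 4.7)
The plan is to establish the set equality by matching cardinalities: I will produce $\binom{n+1}{k}$ distinct lattice points of the form $\bp_{j_1,\ldots,j_k}$ inside $\FFLV_n(\varpi_k)$, and then invoke Theorem \ref{Thm:FFL}(1) together with $\dim V(\varpi_k)=\binom{n+1}{k}$ to conclude that these exhaust $\FFLV_n(\varpi_k)_{\mathbb{Z}}$.

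First I would specialize the defining inequalities to $\lambda=\varpi_k$: the right-hand side of the Dyck inequality attached to $\mathbf{p}\in\mathbb{P}_{i,j}$ equals $1$ if $k\in[i,j]$ and $0$ otherwise. Combined with non-negativity, the $0$-inequalities collapse to a strong support constraint: for any positive root $\alpha_{s,t}$ with $k\notin[s,t]$, applying the inequality to the Dyck path
$$(\alpha_s,\alpha_{s,s+1},\ldots,\alpha_{s,t},\alpha_{s+1,t},\ldots,\alpha_{t,t})\in\mathbb{P}_{s,t}$$
forces $a_{\alpha_{s,t}}=0$. Thus every lattice point is supported in the ``hook'' $H_k=\{\alpha_{s,t}\mid s\leq k\leq t\}$, and on the hook each surviving Dyck inequality just says the corresponding path carries sum at most $1$.

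Next I would verify that each $\bp_{j_1,\ldots,j_k}$ lies in $\FFLV_n(\varpi_k)_{\mathbb{Z}}$. Its support is $\{(p_r,j_{k-r+1}-1)\mid r=1,\ldots,k-s\}$, each pair satisfies $p_r\leq k\leq j_{k-r+1}-1$ and hence lies in $H_k$; each coordinate equals $0$ or $1$. The key combinatorial observation is that as $r$ runs over $1,\ldots,k-s$, the first coordinate $p_r$ strictly increases while the second coordinate $j_{k-r+1}-1$ strictly decreases (because $j_{s+1}<j_{s+2}<\cdots<j_k$). Any two support roots therefore form an antichain in the Dyck partial order: one has strictly smaller first and strictly larger second coordinate than the other. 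Since a Dyck path has weakly increasing first and second coordinates along its length, no Dyck path can contain two support roots, so every inequality is satisfied with sum at most $1$.

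Finally the map $(j_1,\ldots,j_k)\mapsto\bp_{j_1,\ldots,j_k}$ is injective: the support reads off $\{p_1,\ldots,p_{k-s}\}\subseteq\{1,\ldots,k\}$ (whose complement in $\{1,\ldots,k\}$ recovers $\{j_1,\ldots,j_s\}$) and the multiset of shifted second coordinates recovers $\{j_{s+1},\ldots,j_k\}$. This yields $\binom{n+1}{k}$ pairwise distinct lattice points in $\FFLV_n(\varpi_k)$. By Theorem \ref{Thm:FFL}(1), $|\FFLV_n(\varpi_k)_{\mathbb{Z}}|=\dim V(\varpi_k)=\binom{n+1}{k}$, so equality of the two sets follows from the equality of cardinalities. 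The only non-routine ingredient is the antichain verification in the middle step; the rest is definition-chasing plus a dimension count.
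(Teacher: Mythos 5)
Your proof is correct. Note, however, that the paper itself gives no argument for Proposition~\ref{Prop:PointsFFLV}: it is quoted verbatim from Feigin~\cite{Fei}, so you are supplying a proof where the paper merely cites. Your argument is clean: the specialization of the Dyck inequalities to $\varpi_k$ kills all coordinates outside the hook $\{\alpha_{s,t}\mid s\le k\le t\}$; the monotonicity of a Dyck path in both indices means the support of $\bp_{j_1,\ldots,j_k}$, being an antichain (strictly increasing first index, strictly decreasing second index), meets each Dyck path in at most one root; and injectivity plus $\dim V(\varpi_k)=\binom{n+1}{k}$ (via Theorem~\ref{Thm:FFL}(1)) closes the count. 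One small remark: the dimension count makes your proof lean on the FFL basis theorem, so it is not purely combinatorial; a direct argument that every lattice point supported on the hook with Dyck sums $\le 1$ must have antichain support of the given form would also work and would make the proposition independent of~\cite{FFL1}. But as written your argument is complete and correct, and it is arguably more economical than reproving the converse inclusion by hand.
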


We turn to the proof of Proposition \ref{Prop:fundamental}.

\begin{proof}[Proof of Proposition \ref{Prop:fundamental}]
We first show that $\FFLV_n(\varpi_k)_\mathbb{Z}=\mathcal{L}_{\bi^k}(\varpi_k)_\mathbb{Z}$.
Putting together the discussions above and Theorem \ref{Thm:canonical} (3), it suffices to show that for any $1\leq j_1<\cdots<j_k\leq n+1$, $\bp_{j_1,\cdots,j_k}\in\mathcal{L}_{\bi^k}(\varpi_k)$. According to Theorem \ref{Thm:Lusroplex}, we show that $\bp_{j_1,\cdots,j_k}\in\Delta_{\varpi_k}(P_{\bi^k},>_{\mathrm{roplex}})$.

By Definition \ref{Defn:essential} and Lemma \ref{Lem:zeros}, this amount to determine under the right opposite lexicographic ordering, which monomial in \eqref{Eq:1} is minimal. According to Lemma \ref{Lem:betaik} (3), we opt to choose the root vectors $f_{i,j}$ where the second index is large, and the first index is small. In \eqref{Eq:1}, the second index
satisfies $j_k-1>j_{k-1}-1>\cdots>j_{s+1}-1$: it suffices to choose $\sigma $ such that $\sigma(1)<\cdots<\sigma({k-s})$, that is to say, $\sigma=\mathrm{id}$.

It remains to show that the polytopes are the same. Since $\FFLV_n(\varpi_k)$ is a lattice polytope, $\FFLV_n(\varpi_k)\subseteq\mathcal{L}_{\bi^k}(\varpi_k)$. It is clear that they have the same dimension. By Weyl character formula, they even share the same volume, implying the equality.
\end{proof}

Combining Proposition \ref{Prop:fundamental} and the Minkowski property in Theorem \ref{Thm:FFL} proves Theorem \ref{Thm:Main} for multiples of fundamental weights.

\begin{corollary}
For any $r\geq 1$, $\FFLV_n(r\varpi_k)=\mathcal{L}_{\bi^k}(r\varpi_k)$.
\end{corollary}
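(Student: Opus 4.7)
The plan is to combine Proposition \ref{Prop:fundamental}, which settles the case $r=1$, with two homogeneity identities: the Minkowski property of FFLV polytopes on one side and a scaling property for Lusztig polytopes on the other.

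First, the Minkowski property in Theorem \ref{Thm:FFL}(2) gives, by an easy induction on $r$,
\[
\FFLV_n(r\varpi_k) \;=\; r\cdot \FFLV_n(\varpi_k),
\]
where $r\cdot P$ denotes the dilation of $P$ by the scalar $r$. Substituting Proposition \ref{Prop:fundamental} rewrites the right-hand side as $r\cdot\mathcal{L}_{\bi^k}(\varpi_k)$, so the claim reduces to the scaling identity
\[
\mathcal{L}_{\bi^k}(r\varpi_k) \;=\; r\cdot \mathcal{L}_{\bi^k}(\varpi_k).
\]

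For this, I would use the Newton--Okounkov realisation of Theorem \ref{Thm:Lusroplex}. The cone $C_\lambda(P_{\bi^k},>_{\mathrm{roplex}})$ is generated by $\bigsqcup_{m\ge 1}\{m\}\times\mathrm{es}_{m\lambda}(P_{\bi^k},>_{\mathrm{roplex}})$, so the linear rescaling $(t,y)\mapsto(rt,y)$ carries every generator of $C_{r\lambda}$ to a generator of $C_\lambda$. Slicing at height one yields $\Delta_{r\varpi_k}\subseteq r\cdot\Delta_{\varpi_k}$, i.e.\ $\mathcal{L}_{\bi^k}(r\varpi_k)\subseteq r\cdot\mathcal{L}_{\bi^k}(\varpi_k)$. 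The reverse inclusion follows from the classical Minkowski sum property $\mathcal{L}_\bi(\mu)+\mathcal{L}_\bi(\mu')\subseteq\mathcal{L}_\bi(\mu+\mu')$ of Lusztig polytopes, applied $r$ times; this is itself a consequence of the upper-triangular decomposition of tensor products in the canonical basis. Chaining the three equalities produces $\FFLV_n(r\varpi_k)=\mathcal{L}_{\bi^k}(r\varpi_k)$.

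The main delicate step is the scaling identity for the Lusztig polytope; should a clean reference to the Minkowski-sum inclusion for $\mathcal{L}_\bi$ be awkward to invoke, one can close the argument instead by the same volume comparison used at the end of the proof of Proposition \ref{Prop:fundamental}. Indeed, both $\FFLV_n(r\varpi_k)$ and $\mathcal{L}_{\bi^k}(r\varpi_k)$ have exactly $\dim V(r\varpi_k)$ lattice points (by Theorem \ref{Thm:FFL}(1) and Theorem \ref{Thm:canonical}(3) respectively), so their leading Ehrhart coefficients -- hence their volumes -- coincide, and since $\FFLV_n(r\varpi_k)=r\cdot\mathcal{L}_{\bi^k}(\varpi_k)\subseteq\mathcal{L}_{\bi^k}(r\varpi_k)$, equality of volumes forces equality of the two polytopes.
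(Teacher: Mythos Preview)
Your argument is correct and is exactly the paper's one-line derivation---Proposition~\ref{Prop:fundamental} combined with the Minkowski property of Theorem~\ref{Thm:FFL}(2)---with the added merit that you make explicit the scaling identity $\mathcal{L}_{\bi^k}(r\varpi_k)=r\cdot\mathcal{L}_{\bi^k}(\varpi_k)$, which the paper leaves implicit in the Newton--Okounkov realisation of Theorem~\ref{Thm:Lusroplex}. Two minor quibbles with your fallback volume argument: the step ``same lattice-point count $\Rightarrow$ same leading Ehrhart coefficient'' is not valid for a single value of $r$ and really needs the Newton--Okounkov volume formula (equivalently, the count for \emph{all} dilations), just as in the last paragraph of the proof of Proposition~\ref{Prop:fundamental}; and you present that fallback as a way to avoid the Minkowski-sum inclusion for $\mathcal{L}_\bi$, yet the containment $r\cdot\mathcal{L}_{\bi^k}(\varpi_k)\subseteq\mathcal{L}_{\bi^k}(r\varpi_k)$ you invoke there \emph{is} precisely that inclusion---if you want to bypass it, pair the volume equality instead with the opposite containment $\mathcal{L}_{\bi^k}(r\varpi_k)\subseteq r\cdot\mathcal{L}_{\bi^k}(\varpi_k)$ already obtained from the cone rescaling.
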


Applying again the Minkowski property in Theorem \ref{Thm:FFL} terminates the proof of Theorem \ref{Thm:Main}.

\section{Geometric proof}\label{Sec:4}

To simplify notations, we set $m=n+1$ in this section.

\subsection{Rhombic tiling}

The inequalities defining type $\mathsf{A}$ Lusztig polytopes can be described using rhombic tilings and Reineke vectors. We briefly recall these constructions following \cite{E, GKS16, GKS19}.

First draw a $2m$-gon $C_{2m}$ on the plane and fix a vertex $v_0$ of $C_{2m}$. One labels the edges of $C_{2m}$ clockwise starting from $v_0$ by $1,2,\cdots,m$ until a vertex $v_1$; these edges are called left boundary. Then continue labelling the edges starting from $v_1$ by $1,2,\cdots,m$ and call them the right boundary.

We fix a reduced decomposition $\mathbf{i}=(i_1,\cdots,i_N)\in R(w_0)$ and an enumeration of positive roots $\Delta_+=\{\beta^\bi_1,\cdots,\beta^\bi_N\}$ where $\beta_k^\bi=\alpha_{s_k,t_k}$.

We start from $\alpha_{s_1,t_1}$: this is a simple root hence $t_1=s_1$; we complete the edges on the left boundary labeled by $s_1$, $s_1+1$ to a rhombus inside of $C_{2m}$. The opposite edges in this rhombus will be labelled by the same number. This gives us a new connected set of edges labeled by $1,\cdots,m$.

We move to this new set of edges and consider the second positive root $\alpha_{s_2,t_2}$. In this new set of edges, edges labelled by $s_2$ and $t_2+1$ are neighbours, we complete them into a rhombus inside of $C_{2m}$, label the opposite edges by the same number and switch to this new set of edges. According to \cite{E}, when this procedure is applied consecutively to $\alpha_{s_1,t_1}$, $\cdots$, $\alpha_{s_N,t_N}$, we obtain a rhombic tiling $\mathcal{T}$ of the $2m$-gon $C_{2m}$. Every edge in the tiling is labeled by a number in $[m]$.

A set of edges is called \emph{connected}, if there exists one and only one path between any two vertices. A connected set of edges in $\mathcal{T}$ is called a \emph{border}, if it contains exactly one edge with each label.

Each tile $T$ in $\mathcal{T}$ has exactly two edge labels: if these edge labels are $1\leq s\neq t\leq m$, we will denote the tile by $T=[s,t]$.

A sequence $\gamma=(\gamma_i)_{1\leq i\leq r}$ of tiles in $\mathcal{T}$ is termed \emph{neighbour sequence}, if for any $1\leq i\leq r-1$, the tiles $\gamma_i$ and $\gamma_{i+1}$ share an edge.

For $1\leq t\leq m$, the $t$-strip $\mathcal{S}^t$ is defined to be the neighbour sequence $\gamma=(\gamma_i)_{1\leq i\leq m-1}$ such that for any $1\leq i\leq m-1$, one of the edges of $\gamma_i$ is labeled by $t$ and one edge labeled by $t$ in $\gamma_1$ is on the left boundary. We will denote $\mathcal{S}_k^t:=\gamma_k$.

\begin{example}
The rhombic tiling associated to the reduced word $\bi_n^{\min}$ (resp. $\bi_n^{\max}$) is called a \emph{standard} (resp. \emph{anti-standard}) tiling.

\vskip 10pt
\begin{figure}[H]
\begin{tikzpicture}[scale=.6]
    \node[left] at (-1.2,0.3) {{$1$}};
    \node[left] at (-0.8,1.4) {{\tiny [1,2]}};
    \node[left] at (-2.5,1.5) {{$2$}};
    \node[left] at (-2.6,4.3) {{$3$}};
    \node[left] at (-1.1,5.9) {{$4$}};
    \node[left] at (1.6,5.9) {{$1$}};
    \node[left] at (3.1,4.3) {{$2$}};
    \node[left] at (3.1,1.5) {{$3$}};
    \node[left] at (1.7,0.3) {{$4$}};
    \node at (-1.8,2.5) {{$1$}};
    \node at (-1.1,4.6) {{$1$}};    
    \node at (-0.5,1.3) {{$2$}};    
    \node at (0.5,3) {{$2$}};    
    \node at (-0.5,3) {{$3$}};    
    \node at (0.5,1.3) {{$3$}};    
    \node at (1.1,4.6) {{$4$}};    
    \node at (1.8,2.5) {{$4$}};    

    \node[left] at (-0.8,3.4) {{\tiny [1,3]}};
    \node[left] at (0.5,4.9) {{\tiny [1,4]}};
    \node[left] at (0.7,2.1) {{\tiny [2,3]}};
    \node[left] at (2.2,3.3) {{\tiny [2,4]}};
    \node[left] at (2.2,1.4) {{\tiny [3,4]}};
	\draw (0,0) -- (-2,1) -- (-3,3) -- (-2,5) -- (0,6) -- (2,5) -- (3,3) -- (2,1) -- (0,0);
	\draw (0,0) -- (-1,2) -- (0,4) -- (1,2) -- (0,0);
	\draw (-1,2) -- (-3,3);
	\draw (0,4) -- (-2,5);
	\draw (1,2) -- (3,3);
	\draw (0,4) -- (2,5);
\end{tikzpicture}
\caption{rhombic tiling for $\bi_3^{\min}$.}\label{fig:min}
\end{figure}
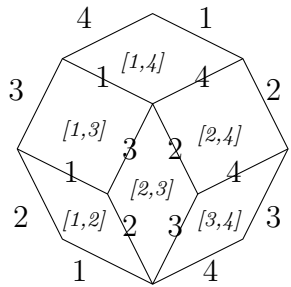
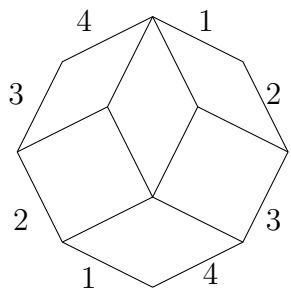
\begin{figure}[H]
\begin{tikzpicture}[scale=.6]
    \node[left] at (-1.0,0.2) {{$1$}};
    \node[left] at (-2.5,1.5) {{$2$}};
    \node[left] at (-2.6,4.3) {{$3$}};
    \node[left] at (-1.1,5.9) {{$4$}};
    \node[left] at (1.6,5.9) {{$1$}};
    \node[left] at (3.1,4.3) {{$2$}};
    \node[left] at (3.1,1.5) {{$3$}};
    \node[left] at (1.7,0.3) {{$4$}};
    
	\draw (0,0) -- (-2,1) -- (-3,3) -- (-2,5) -- (0,6) -- (2,5) -- (3,3) -- (2,1) -- (0,0);
	\draw (0,6) -- (-1,4) -- (0,2) -- (1,4) -- (0,6);
	\draw (-1,4) -- (-3,3);
	\draw (0,2) -- (-2,1);
	\draw (1,4) -- (3,3);
	\draw (0,2) -- (2,1);
\end{tikzpicture}
\caption{rhombic tiling for $\bi_3^{\max}$.}\label{fig:max}
\end{figure}
\end{example}

For any $s\in[2m]$ we define a partial order $\preceq_s$ on the tiles in $\mathcal{T}$ in the following way: we label the boundary of $C_{2m}$ starting from $v_0$ by $b_1,\cdots,b_{2m}$. Let $B_1$ be the border consisting of edges $b_{m+s+1},\cdots,b_{2m+s}$ where the indices are understood modulo $2m$.

Denote $\mathcal{T}_1^s$ to be the set of tiles in $\mathcal{T}$ intersecting $B_1$ in two edges. We move to a new border $B_2$ obtained from $B_1$ by: for every tile in $\mathcal{T}_1^s$, replace the two edges intersecting $B_1$ by the other two edges. Then denote $\mathcal{T}^s_2$ to be the set of tiles in $\mathcal{T}\setminus\mathcal{T}_1^s$ intersecting $B_2$ with two edges and repeat the above procedure.

Eventually one obtain a partition of $\mathcal{T}$ into a disjoint union of $\mathcal{T}_1^s$, $\mathcal{T}_2^s$, $\cdots$. 

For two tiles $T,T'\in\mathcal{T}$ such that $T\in\mathcal{T}_t^s$ and $T'\in\mathcal{T}_r^s$, we will denote $T\preceq_s T'$ if $t\leq r$.

For $s\in [2m]$ a neighbour sequence $\gamma=(\gamma_1,\cdots,\gamma_m)$ in $\mathcal{T}$ is called \emph{$s$-ascending}, if for any $1\leq i\leq m-1$, $\gamma_i\prec_s\gamma_{i+1}$.

An $s$-ascending neighbour sequence $(\gamma_1,\cdots,\gamma_p)$ in $\mathcal{T}$ is called an \emph{$s$-crossing}, if $\gamma_1=\mathcal{S}_1^s$ and $\gamma_p=\mathcal{S}_1^{s+1}$. 

To an $s$-ascending neighbour sequence $(\gamma_1,\cdots,\gamma_p)$ in $\mathcal{T}$ we associate a \emph{strip sequence} $(s_1,\cdots,s_r)$ in the following way: there exists $1\leq j_1<\cdots<j_r<j_{r+1}=p$ such that $\gamma_1,\cdots,\gamma_{j_1}\in \mathcal{S}^{s_1}$, $\gamma_{j_1+1},\cdots,\gamma_{j_2}\in \mathcal{S}^{s_2}$, $\cdots$, $\gamma_{j_r+1},\cdots,\gamma_{p}\in \mathcal{S}^{s_r}$. 

An $s$-crossing is uniquely determined by its strip sequence.

Let $\Gamma_s$ denote the set of $s$-crossings in $\mathcal{T}$. We denote $\mathcal{W}_s$ the $s$-crossings given by the strip sequence $(s,s+1)$. Such an $s$-crossing exists, and will be called an $s$-\emph{comb}.

\subsection{Dual Reineke vectors and H-description of Lusztig polytopes}

We introduce a dual version of the constructions above. They give the potential facets of Lusztig polytopes.

The set $\Gamma_s^*$ of dual $s$-crossings consists of $(m+s)$-ascending neighbour sequences $(\gamma_1,\cdots,\gamma_p)$ at $\gamma_1=\mathcal{S}_{n}^s$ and ending at $\gamma_p=\mathcal{S}_{n}^{s+1}$. One can similarly define the strip sequence of a dual $s$-crossing. A dual $s$-crossing is called a dual $s$-comb, if its strip sequence is $(s,s+1)$.

A dual $s$-crossing $\gamma=(\gamma_1,\cdots,\gamma_p)\in\Gamma_s$ is called a \emph{dual Reineke $s$-crossing}, if for any $\gamma_i=[a,b]$ such that $\gamma_{i-1},\gamma_i,\gamma_{i+1}$ lie in the same strip $\mathcal{S}^a$, \begin{enumerate}
\item[-] if $b\leq s$ then $a>b$;
\item[-] if $b\geq s+1$ then $a<b$.
\end{enumerate}
Let $\mathcal{R}_s^*\subseteq\Gamma_s^*$ denote the set of dual Reineke $s$-crossings.

Let $\gamma=(\gamma_1,\cdots,\gamma_p)\in\Gamma_s$ with strip sequence $(s_1=s,\cdots,s_q=s+1)$, we define 
$$
(\mathsf{r}(\gamma))_T:=\begin{cases}
\mathrm{sgn}(s_{i+1}-s_i), & \text{if }T=[s_i,s_{i+1}]\text{ for some }1\leq i\leq q-1;\\
0, & \text{otherwise};
\end{cases}
$$
$$
\ve_s(T):=\begin{cases}
1, & \text{if }T=[a,b]\in\mathcal{T} \text{ and }a\leq s<s+1\leq b;\\
-1 & \text{else};
\end{cases}
$$
and for $\mathbf{x}\in\mathbb{R}^\mathcal{T}$,
$$\mathsf{s}(\gamma)(\mathbf{x})=\sum_{T\in\Gamma,\ \ve_s(T)=1}x_T-\sum_{\substack{T\in\gamma,\ \ve_s(T)=-1,\\ (\mathsf{r}(\gamma))_T=0}}x_T.$$

\begin{theorem}[\cite{GKS19}]\label{GKS19}
For $\lambda=\lambda_1\varpi_1+\cdots+\lambda_n\varpi_n\in\Lambda^+$, 
$$\mathcal{L}_\bi(\lambda)=\left\{\mathbf{x}\in\mathbb{R}_{\geq 0}^{\Delta_+}\mid \text{for any }s\in [m],\ \gamma\in\mathcal{R}_s^*,\ \mathsf{s}(\gamma)(\mathbf{x})\leq\lambda_s\right\}.$$
\end{theorem}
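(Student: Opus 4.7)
The plan is to combine the Berenstein-Zelevinsky description of $\mathcal{L}_\bi(\lambda)$ with the combinatorics of the rhombic tiling $\mathcal{T} = \mathcal{T}_\bi$. Recall from \cite{BZ01} that $\mathcal{L}_\bi(\lambda)$ is cut out by the non-negativity inequalities on the coordinates together with a finite family of linear inequalities $f(\mathbf{x}) \leq \lambda_s$, one for each tropical form obtained from a positive rational map relating the Lusztig parametrization with respect to $\bi$ to the parametrization by the highest-weight orbit in $V(\varpi_s)$. A priori this system is over-determined; the task is to repackage it through the tiling and to isolate its facet-defining subfamily.

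The first step would be a dictionary between these tropical forms and $s$-crossings in $\mathcal{T}$: the tiles of $\mathcal{T}$ are naturally labeled by positive roots (via their pair of edge labels), so a neighbour sequence of tiles traces out a sequence of roots, and a $(m+s)$-ascending neighbour sequence from $\mathcal{S}_n^s$ to $\mathcal{S}_n^{s+1}$ corresponds to a tropical BZ path from $\varpi_s$ to an extremal $\varpi_s$-weight of $V(\varpi_s)$. I would then check that the tropical functional attached to such a path equals $\mathsf{s}(\gamma)$: the sign $\ve_s(T)$ records whether a tile $T = [a,b]$ straddles the levels $s$ and $s+1$ (hence whether $f_T$ raises or lowers the $\varpi_s$-component), while the correction $(\mathsf{r}(\gamma))_T$ accounts for contributions at the mutation-type tiles where the crossing changes strip.

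The second step is to restrict from $\Gamma_s^*$ to the subset $\mathcal{R}_s^*$ of dual Reineke $s$-crossings. The dual Reineke condition is the tropical shadow of Reineke's slope condition from the stability theory of quiver representations: when three consecutive tiles $\gamma_{i-1},\gamma_i,\gamma_{i+1}$ lie in the same strip $\mathcal{S}^a$, the middle tile $\gamma_i = [a,b]$ must obey the prescribed inequality between $b$ and $s$. A local swap move in $\mathcal{T}$ should turn any $s$-crossing violating this condition into a shorter $s$-crossing whose inequality dominates the original one modulo the positivity inequalities $x_T \geq 0$; iterating reduces every $s$-crossing to a dual Reineke one, so the dual Reineke inequalities alone suffice to cut out $\mathcal{L}_\bi(\lambda)$.

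The main obstacle is the converse direction: verifying that every dual Reineke inequality is actually facet-defining, i.e., none is redundant. This requires producing, for each $\gamma \in \mathcal{R}_s^*$, a vertex of $\mathcal{L}_\bi(\lambda)$ that saturates $\mathsf{s}(\gamma)(\mathbf{x}) = \lambda_s$ while leaving all other dual Reineke inequalities strict. A natural route is to use the cluster seed structure on $\mathcal{T}$ to identify vertices with certain extremal weight vectors of $V(\lambda)$ indexed by chains of tiles, and to argue by a dimension count that these vertices exhaust the facets. Matching this combinatorial/geometric data cleanly, and handling the boundary cases where crossings degenerate near $\mathcal{S}^s \cap \mathcal{S}^{s+1}$, is the delicate part of the argument.
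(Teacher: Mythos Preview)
The paper does not prove this theorem. It is quoted from the preprint \cite{GKS19} (Genz--Koshevoy--Schumann) and used as a black box in the geometric proof of Section~4; no argument for it appears anywhere in the present text. There is therefore no ``paper's own proof'' to compare your proposal against.

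As a side remark on the proposal itself: your final paragraph conflates two different goals. To establish the stated equality you need only show (a) every point of $\mathcal{L}_\bi(\lambda)$ satisfies all the dual Reineke inequalities, and (b) every $\mathbf{x}\in\mathbb{R}_{\geq 0}^{\Delta_+}$ satisfying them already lies in $\mathcal{L}_\bi(\lambda)$. Neither direction requires that each individual dual Reineke inequality be facet-defining; the theorem makes no irredundancy claim. If your second step succeeds---reducing an arbitrary $s$-crossing inequality to a dual Reineke one modulo the positivity constraints---then (b) follows immediately once the full Berenstein--Zelevinsky system has been identified with the $\Gamma_s^*$ inequalities, and the vertex-saturation argument you describe is unnecessary. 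The genuine work, as you correctly identify, lies in the first step: matching the tropical BZ functionals with $\mathsf{s}(\gamma)$ tile by tile.
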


%\subsection{Poset polytopes}

\subsection{Proof of Theorem}

The goal of this subsection is to give a second proof to Theorem \ref{Thm:Main}. 

We start from considering the case $\lambda=\varpi_k$ for $1\leq k< m=n+1$ and the rhombic tiling associated to $\bi^k$. Such a tiling can be obtained by gluing together the following three parts:
\begin{enumerate}
\item[-] a rectangular tableau of size $k\times (m-k)$ slightly rotated counterclockwise around its SW-corner;
\item[-] the east border of the rectangle is glued with the left border of the anti-standard tiling for $\mathrm{SL}_k$, where the top vertex in the anti-standard tiling is glued together with the NE-corner vertex of the rectangle; we denote the tiles in the anti-standard tiling by $\mathcal{T}'$;
\item[-] the south border of the rectangular tableau is glued with the left border of the standard tiling for $\mathrm{SL}_{m-k}$, where the bottom vertex in the standard tiling is glued together with the SW-corner vertex of the rectangle; we denote the tiles in the standard tiling by $\mathcal{T}''$.
\end{enumerate}

Here is an example for $m=7$ and $k=3$:

\begin{figure}[H]
\begin{tikzpicture}[scale=.4]
   
   \node at (-2,0) {{$1$}};    
   \node at (-5.7,1.2) {{$2$}};    
   \node at (-8.2,4) {{$3$}};    
   \node at (-9.8,7) {{$4$}};    
   \node at (-9.3,11) {{$5$}};    
   \node at (-6.8,13.5) {{$6$}};    
   \node at (-3,15.2) {{$7$}};    
   \node at (1.8,15) {{$1$}};    
   \node at (4.8,13.5) {{$2$}};    
   \node at (7.4,11) {{$3$}};    
   \node at (8.5,7.2) {{$4$}};    
   \node at (8,4) {{$5$}};    
   \node at (6,1.5) {{$6$}};    
   \node at (2.2,-0.1) {{$7$}};

	\draw (0,0) -- (-4,1) -- (-7,3) -- (-9,6) -- (-9,9) -- (-8,12) -- (-5,14) -- (-1,15) -- (3,14) -- (6,12) -- (8,9) -- (8,6) -- (7,3) -- (4,1) -- (0,0);
	\draw (0,0) -- (0,3) -- (1,6) -- (4,8) -- (8,9);
	\filldraw[fill=black,fill opacity=0.1](0,0) -- (0,3) -- (1,6) -- (4,8) -- (8,9) -- (8,6) -- (7,3) -- (4,1) -- (0,0);
	\filldraw[fill=black,fill opacity=0.1](-1,15) -- (1,12) -- (4,10) -- (8,9) -- (6,12) -- (3,14) -- (-1,15);
	\draw (-4,1) -- (-4,4) -- (-3,7) -- (0,9) -- (4,10) -- (8,9);
	\draw (-7,3) -- (-7,6) -- (-6,9) -- (-3,11) -- (1,12) -- (4,10);
	\draw (0,3) -- (-4,4) -- (-7,6) -- (-9,9);
	\draw (1,6) -- (-3,7) -- (-6,9) -- (-8,12);
	\draw (4,8) -- (0,9) -- (-3,11) -- (-5,14);
	\draw (0,0) -- (3,2) -- (7,3);
	\draw (0,0) -- (1,3) -- (4,5) -- (8,6);
	\draw (1,12) -- (-1,15);
	\draw (4,8) -- (4,5) -- (3,2);
	\draw (1,6) -- (1,3);
	\draw (-1,15) -- (2,13) -- (4,10);
	\draw (2,13) -- (6,12);

\end{tikzpicture}
  \caption{Rhombic tiling for $m=7$ associated to $\bi^3$.}
  \label{fig:sl7}
\end{figure}
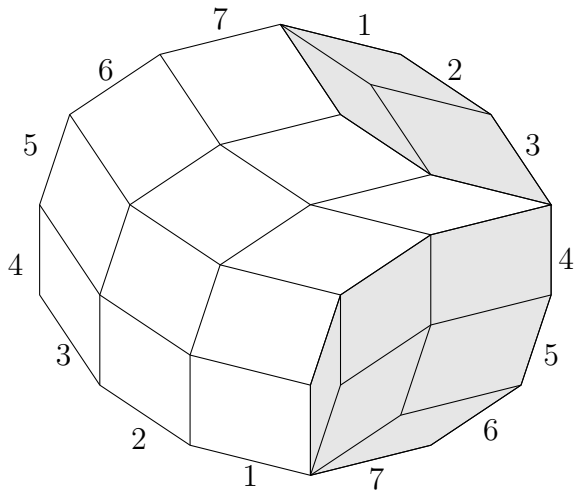

In the following we will denote the tile $[s,t]$ by $T_{[s,t]}$; for $\bx\in\mathbb{R}^{\mathcal{T}}$, the value assigned to the tile $[s,t]$ is $x_{s,t}$.

\begin{lemma}\label{Lem:A}
For $\bx\in\mathcal{L}_{\bi^k}(r\varpi_k)$, if $a\leq k$ and $k+1\leq b$ do not hold simultaneously, then $x_{a,b}=0$.
\end{lemma}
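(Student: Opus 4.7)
The plan is to read the vanishing off the $H$-description of $\mathcal{L}_{\bi^k}(r\varpi_k)$ given by Theorem \ref{GKS19}. Since $\lambda = r\varpi_k$ has $\lambda_s = 0$ for every $s \neq k$, each dual Reineke $s$-crossing with $s\neq k$ produces a homogeneous inequality $\mathsf{s}(\gamma)(\bx) \leq 0$, which combined with the built-in non-negativity $x_T \geq 0$ will force the ``unwanted'' coordinates to vanish.

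Recall that the tiling for $\bi^k$ splits into three pieces: the rotated $k \times (m-k)$ central rectangle, the anti-standard tiling $\mathcal{T}'$ for $\mathrm{SL}_k$ attached on its NE border, and the standard tiling $\mathcal{T}''$ for $\mathrm{SL}_{m-k}$ attached on its SW border. The tiles of the central rectangle are precisely those $[a,b]$ with $a \leq k$ and $k+1 \leq b$; tiles of $\mathcal{T}'$ satisfy $a \geq k+1$ and tiles of $\mathcal{T}''$ satisfy $b \leq k$. So the ``unwanted'' tiles in the lemma are exactly those lying in $\mathcal{T}' \cup \mathcal{T}''$.

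For a tile $T = [a,b] \in \mathcal{T}''$, I would construct a dual Reineke $s$-crossing with $s = b-1 < k$: essentially an $s$-comb running through $\mathcal{T}''$, crossing $T$, and finishing in the rectangle at $\mathcal{S}_n^{s+1}$, arranged so that $x_T$ is the only positive-sign contribution to $\mathsf{s}(\gamma)(\bx)$ while every other surviving term is $\geq 0$. The resulting inequality then reads $x_T + (\text{non-negative terms}) \leq 0$, forcing $x_T = 0$. The tiles in $\mathcal{T}'$ are handled by the symmetric construction using $s = a > k$, traversing the anti-standard tiling.

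The main obstacle is the combinatorial verification that the crossings so constructed are genuine dual Reineke crossings (that is, satisfy the comb sign conditions in the definition of $\mathcal{R}_s^*$), and that the sign pattern of $\mathsf{s}(\gamma)$ really does isolate $x_T$ with coefficient $+1$ while keeping all other terms non-negative. This should be tractable because in $\mathcal{T}''$ every tile $[a',b']$ automatically has $a' < b'$ and symmetrically in $\mathcal{T}'$ every tile has $a' > b'$, which is precisely the sign condition demanded by the Reineke conditions on the entries of the strip sequence.
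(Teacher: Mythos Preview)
Your overall strategy---use Theorem~\ref{GKS19} with $\lambda_s=0$ for $s\neq k$ so that every dual Reineke $s$-crossing with $s\neq k$ yields a homogeneous inequality---is exactly the paper's approach. However, two things go wrong in the execution.

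First, a labelling slip: you have $\mathcal{T}'$ and $\mathcal{T}''$ interchanged. In the paper's convention the anti-standard piece $\mathcal{T}'$ (for $\mathrm{SL}_k$) carries the tiles $[a,b]$ with $a,b\leq k$, and the standard piece $\mathcal{T}''$ (for $\mathrm{SL}_{m-k}$) carries those with $a,b\geq k+1$.

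Second, and more substantively, your picture of the dual $s$-crossings is off. You say the crossing ``finishes in the rectangle at $\mathcal{S}_n^{s+1}$''. In fact, for $s<k$ the right-boundary edges labelled $s$ and $s+1$ both border $\mathcal{T}'$, so both $\mathcal{S}_n^{s}$ and $\mathcal{S}_n^{s+1}$ already lie in $\mathcal{T}'$; the paper's key point is precisely that every dual $s$-crossing with $s<k$ stays \emph{entirely inside} $\mathcal{T}'$ (and symmetrically for $s>k$ and $\mathcal{T}''$). If the crossing did wander into the rectangle, it would pick up coordinates $x_{a,b}$ with $a\leq k<b$ that you have no control over, and your inequality would be useless.

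Once you have the correct containment, your tile-by-tile isolation of $x_T$ becomes unnecessary (and, as written, is shaky: the functional $\mathsf{s}(\gamma)$ generally has terms of both signs, so ``$x_T + (\text{non-negative terms})\leq 0$'' is not automatic for a single comb). The clean way to finish is to observe that the dual Reineke $s$-crossings for $s=1,\ldots,k-1$ inside $\mathcal{T}'$ are exactly those defining the Lusztig polytope of the anti-standard word for $\mathrm{SL}_k$ at the zero weight; that polytope is the origin, so all coordinates in $\mathcal{T}'$ vanish. The same argument with $s=k+1,\ldots,m-1$ handles $\mathcal{T}''$.
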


\begin{proof}
The tiles in the anti-standard (resp. standard) part $\mathcal{T}'$ (resp. $\mathcal{T}''$) have the form $T_{[a,b]}$ where $a,b<k$ (resp. $a,b\geq k+1$). 

We look at the set $\Gamma_s$ for $s<k$. First notice that the dual $s$-crossings will not go outside of the tiles in $\mathcal{T}'$, and any tile in $\mathcal{T}'$ is contained in some dual Reineke $s$-crossing for $s<k$. By Theorem \ref{GKS19}, $\lambda_s=0$ implies that for any $a\leq b<k$, $x_{a,b}=0$.

A similar argument shows that for any $k+1\leq a\leq b$, $x_{a,b}=0$.
\end{proof}

We consider the dual $k$-crossings $\Gamma_k^*$. The dual $k$-comb is the union of the $k$-strip $\mathcal{S}^k$ and the $(k+1)$-strip $\mathcal{S}^{k+1}$; it turns from the $k$-strip to the $(k+1)$-strip at the tile $T_{[k,k]}$.

\begin{lemma}\label{Lem:B}
The following statements hold:
\begin{enumerate}
\item Each dual $k$-crossing is contained in the dual $k$-comb.
\item We have $\Gamma_k^*=\mathcal{R}_k^*$.
\end{enumerate}
\end{lemma}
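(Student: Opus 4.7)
The plan is to use the explicit three-part decomposition of the tiling for $\bi^k$ (central $k\times (m-k)$ rectangle $R$, anti-standard part $\mathcal{T}'$, standard part $\mathcal{T}''$) to pin down the $k$-strip and the $(k{+}1)$-strip, and then exploit the $(m+k)$-ascending condition together with the location of $\mathcal{S}^k_n$ and $\mathcal{S}^{k+1}_n$ to see that the only dual $k$-crossing is the $k$-comb itself. Concretely, the first step will be to identify, within $R$, the segments $\mathcal{S}^k\cap R=\{T_{[k,k+1]},T_{[k,k+2]},\ldots,T_{[k,m]}\}$ (the "bottom row" of the rectangular grid) and $\mathcal{S}^{k+1}\cap R=\{T_{[1,k+1]},T_{[2,k+1]},\ldots,T_{[k,k+1]}\}$ (the "left column"), meeting at $T_{[k,k+1]}$. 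Tiles of $\mathcal{T}'$ (all labels $\leq k$) can only extend $\mathcal{S}^k$, while tiles of $\mathcal{T}''$ (all labels $\geq k+1$) can only extend $\mathcal{S}^{k+1}$; in particular the two strips never cross outside $R$, and their intersection is precisely $\{T_{[k,k+1]}\}$.

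For part (1), after locating $\mathcal{S}^k_n$ and $\mathcal{S}^{k+1}_n$ (both lying on the side of $R$ adjacent to the initial border used to define $\preceq_{m+k}$), I would argue that an $(m+k)$-ascending neighbour sequence starting at $\mathcal{S}^k_n$ must stay inside $R$. The geometric point is that the only ways to leave the $k$-strip are either to transition into $\mathcal{S}^{k+1}$ through the corner tile $T_{[k,k+1]}$, or to transition into a strip $\mathcal{S}^{s'}$ with $s'\leq k-1$ or $s'\geq k+2$; in the latter case the transition tile $[k,s']$ lives in $\mathcal{T}'$ or $\mathcal{T}''$, and any subsequent ascent back to $\mathcal{S}^{k+1}_n\in R$ would either fail to be $(m+k)$-ascending or fail to reach $\mathcal{S}^{k+1}_n$, because the peeling order $\preceq_{m+k}$ visits the "wing" tiles of $\mathcal{T}'\cup\mathcal{T}''$ in one monotone block relative to those of $R$. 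Therefore the strip sequence of any dual $k$-crossing must be $(k,k+1)$, which means it coincides with the dual $k$-comb.

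For part (2), given (1), the only candidate dual $k$-crossing is the $k$-comb, so I only need to verify it is a dual Reineke $k$-crossing. Its tiles fall into two consecutive blocks: the $\mathcal{S}^k$-block consists of tiles $T_{[k,b]}$ with $b\in\{k+1,\ldots,m\}$, and for any interior such tile the Reineke alternative with $s=k$ forces $b\geq s+1\Rightarrow a<b$, which reads $k<b$ and holds; the $\mathcal{S}^{k+1}$-block consists of tiles $T_{[k+1,b]}$ with $b\in\{1,\ldots,k\}$, and the alternative $b\leq s\Rightarrow a>b$ reads $k+1>b$ and again holds. Hence the $k$-comb lies in $\mathcal{R}_k^*$, and combined with (1) this gives $\Gamma_k^*=\mathcal{R}_k^*=\{k\text{-comb}\}$.

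The main obstacle I anticipate is the rigorous argument that an $(m+k)$-ascending neighbour sequence cannot leave $R$ and return to it at $\mathcal{S}^{k+1}_n$. This is essentially a claim about how the peeling procedure defining $\preceq_{m+k}$ interacts with the gluing of $R$ to $\mathcal{T}'$ and $\mathcal{T}''$; I would prove it by describing the initial border $B_1$ explicitly for $s=m+k$ and checking inductively that, in each peeling step, the tiles of $\mathcal{T}'$ and $\mathcal{T}''$ are exhausted in a block that is separated from $\{\mathcal{S}^k_n,\ldots,\mathcal{S}^{k+1}_n\}\subseteq R$ by the $\preceq_{m+k}$-layer through $T_{[k,k+1]}$, so that no ascending detour through the wings is possible.
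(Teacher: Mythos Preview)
Your reading of part (1) is the source of the trouble. You interpret ``each dual $k$-crossing is contained in the dual $k$-comb'' as saying that the dual $k$-comb is the \emph{only} dual $k$-crossing, and you then build the proof of (2) on the conclusion $\Gamma_k^*=\{\text{dual $k$-comb}\}$. This is not what the statement says, and it is false: immediately after the lemma the paper describes an entire family of dual $k$-crossings, one for each saturated Dyck path through the rectangular block $R$. The phrase ``contained in'' refers to the poset structure on $\Gamma_k^*$ from \cite[Section 2.4]{GKS16}, in which the dual $k$-comb is the maximal element; the content of (1) is that the comb dominates every other dual $k$-crossing in that partial order, not that there are no others. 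If $\Gamma_k^*$ were a singleton the H-description of $\mathcal{L}_{\bi^k}(r\varpi_k)$ would contain a single nontrivial inequality, which is plainly not the FFLV polytope.

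Two concrete places where your argument breaks: first, the endpoints $\mathcal{S}^k_n$ and $\mathcal{S}^{k+1}_n$ are the tiles of the $k$- and $(k{+}1)$-strips adjacent to the \emph{right} boundary, so they lie in $\mathcal{T}'$ and $\mathcal{T}''$ respectively, not in $R$ as you assume. Second, and more importantly, a dual $k$-crossing can (and typically does) leave the $k$-strip inside $R$ at a tile $T_{[k,q]}$ with $q>k+1$, traverse several strips $\mathcal{S}^{q},\mathcal{S}^{p},\ldots$ with $p\leq k$ and $q\geq k+1$ while moving through the interior of $R$, and only reach $\mathcal{S}^{k+1}$ at the end; its strip sequence is then longer than $(k,k+1)$. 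Your ``no detour through the wings'' argument does not exclude this, because these detours happen entirely inside $R$.

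For (2) the paper checks the Reineke condition for an arbitrary dual $k$-crossing directly: if $\gamma_{i-1},\gamma_i=[a,b],\gamma_{i+1}$ all lie on $\mathcal{S}^a$, then the $b$-strip is being crossed transversally at $\gamma_i$; in this tiling the strips $\mathcal{S}^b$ with $b\le k$ are horizontal and those with $b\ge k+1$ are vertical, which forces $a>b$ in the first case and $a<b$ in the second. That argument is what you need to supply once you drop the assumption that the comb is the only crossing.
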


\begin{proof}
\begin{enumerate}
\item This follows from the dual version of the poset structure on $\Gamma_k^*$, where the dual $k$-comb is the maximal element \cite[Section 2.4]{GKS16}.
\item Let $\gamma_i=[a,b]$ be contained in a $k$-crossing such that $\gamma_{i-1},\gamma_{i+1}$ are both in the $a$-strip. First assume that $b\leq k$, such a $b$-strip is horizontal, to cross it the $a$-strip must be vertical, hence $a>b$. Similarly if $b\geq k+1$, such a $b$-strip is vertial, hence $a<b$.
\end{enumerate}
\end{proof}

We consider a dual $k$-crossing in $\Gamma_k^*$: such a crossing starts from the right boundary at the tile having a boundary labeled by $k$. Before going to the rectangular tableau, it goes along the tiles $T_{[1,k]}$, $\cdots$, $T_{[r,k]}$ for some $r\leq k-1$. Then the dual $k$-crossing goes into the rectangular tableau at one of the tiles $T_{[1,m]}$, $\cdots$, $T_{[k,m]}$. 

Inside the rectangular tableau, when the crossing reaches a tile $T_{[p,q]}$, the next tile in the crossing can only be $T_{[p-1,q]}$ or $T_{[p,q-1]}$. The crossing goes until it reaches the $1$-strip, say, tiles $T_{[1,k+1]}$, $\cdots$, $T_{[1,m]}$. After that the crossing goes into the standard tiling, along the $(k+1)$-strip until it reaches the tile having a boundary labeled by $k+1$.

As a conclusion, inside the rectangular tableau, a dual $k$-crossing gives a Dyck path, which is saturated in the sense that there is no such a Dyck path in the rectangular tableau containing it.

It remains to consider $\mathsf{s}(\gamma)$ for $\gamma\in\mathcal{R}_k^*$. Every tile $T$ in the rectangular tableau gets $\ve_k(T)=1$. We do not need to care about $\ve_k(T)$ for $T$ being outside of the rectangular tableau since the corresponding coordinates are always zero by Lemma \ref{Lem:A}. Therefore the Lusztig polytope $\mathcal{L}_{\bi^k}(r\varpi_k)$ admits the following description:
\begin{enumerate}
\item[-] for any $r\leq s$ such that $k\notin [r,s]$, $x_{r,s}=0$;
\item[-] for any Dyck path starting from the right border of the rectangular tableau and ending up with the bottom of the tableau, the sum of the coordinates associated to the tiles is less or equal to $r$.
\end{enumerate}
These are nothing but the defining inequalities of $\mathrm{FFLV}(r\varpi_k)$. The proof is complete.

\section{Crystal structure}\label{Sec:5}
As an application to the main result, we propose a conjecture on crystal structures on FFLV parametrisations and examine it in small rank examples.

\subsection{Crystal structure on Lusztig polytopes}
Recall (see Lemma \ref{Lem:betaik}) that for the reduced decomposition $\mathbf i^k$, the corresponding enumeration of positive roots
begins with $k\times (n-k+1)$  roots $\alpha_{i,j}$, $1\le i\le k\le j\le n$.

For $1\leq a\leq n$, let $f_a$ denote the Kashiwara operator corresponding to $a$. We will denote $f_{a,k}$ the Kashiwara operator for $\mathcal{L}_{\bi^k}(r\varpi_k)$ to emphasise its connection to the fundamental weight $\varpi_k$.

The crystal structure on lattice points of the Lusztig polytope $\mathcal L_{\mathbf i^k}(r\varpi_k)$ is defined by the set of Reineke vectors in \cite[Section 4]{GKS16}. 

Precisely, for a lattice point $\bx\in\mathcal{L}_{\bi^k}(r\varpi_k)$, the point $f_{a,k}(\bx)$ takes one of the following forms:
\begin{enumerate}
\item when $1\leq a<k$, there exists $k\leq j\leq n$ such that $f_{a,k}(\bx)=\bx-\delta_{a+1,j}+\delta_{a,j}$;
\item when $k<a\leq n$, there exists $1\leq i\leq k$ such that $f_{a,k}(\bx)=\bx-\delta_{i,a-1}+\delta_{i,a}$;
\item when $a=k$, $f_{a,k}(\bx)=\bx+\delta_{k,k}$;
\end{enumerate}
where $\delta_{i,j}$ is the function in $\mathbb{R}^{\Delta_+}$ taking value $1$ on $\alpha_{i,j}$ and $0$ on the other positive roots.

As a consequence of Theorem \ref{Thm:Main}, on $\FFLV_n(r\varpi_k)$ there exists a crystal structure. Such a structure coincides with the one defined explicitly by Kus in \cite{Kus}. The main results in \cite[Section 3]{Kus} then follow from the crystal structures on Lusztig polytopes described above.

\subsection{Crystal structures on FFLV polytopes}
For $\lambda\in\Lambda^+$, we define an edge-colored directed graph structure on the set $\FFLV_n(\lambda)_\mathbb{Z}$ with colors $\{1,2,\cdots,n\}$. For a point $\bz\in\FFLV_n(\lambda)_\mathbb{Z}$ and $1\leq a\leq n$, for $k=1,\cdots,n$, there exists an edge colored by $a$ from $\bz$ to $f_{a,k}(\bz)$ if $f_{a,k}(\bz)$ is a lattice points in $\FFLV_n(\lambda)_\mathbb{Z}$.

We denote this colored directed graph by $\mathrm{PB}_n(\lambda)$. Below is an example of $\mathrm{PB}_3(\varpi_1+\varpi_2)$, where the color $1$ (resp. $2$) is displayed by red (resp. blue). The coordinates $e_1,e_{12},e_2$ stand for the functions in $\mathbb{R}^{\Delta_+}$ corresponding to $\alpha_1,\alpha_1+\alpha_2,\alpha_2$, respectively.

\begin{figure}[H]
\begin{tikzpicture}[scale=.45, arr/.style={thick,->,>=stealth},dsh/.style={thick,densely dashed},]
   
    \node at (-7,-6) {{$e_2$}};    
    \node at (1,8) {{$e_1$}};
    \node at (11,1) {{$e_{12}$}};
%    \node at (-8,-3) {{$a=3$}};   
%    \node at (4,7) {{$b=4$}};   
	\draw [dsh] (0,0) -- (0,4);
	\draw [dsh] (0,0) -- (8,0);
	\draw [dsh] (0,0) -- (-2.8,-2.8);
	\draw [arr] (8,0) -- (11,0);
	\draw [arr] (0,4) -- (0,8);
	\draw [arr] (-2.8,-2.8) -- (-7,-7);
	\draw (0,4) -- (4,4) -- (-2.8,1.2) -- (0,4);
	\draw (-2.8,1.2) -- (-2.8,-2.8) -- (1.2,-2.8) -- (-2.8,1.2);
	\draw (4,4) -- (8,0) -- (1.2,-2.8);
	
	\draw[fill] (0,0) circle [radius=0.1];	
	\draw[fill] (0,4) circle [radius=0.1];	
	\draw[fill] (4,0) circle [radius=0.1];	
	\draw[fill] (4,4) circle [radius=0.1];	
	\draw[fill] (-2.8,1.2) circle [radius=0.1];	
	\draw[fill] (-2.8,-2.8) circle [radius=0.1];	
	\draw[fill] (8,0) circle [radius=0.1];	
	\draw[fill] (1.2,-2.8) circle [radius=0.1];	
	\draw [red,arr,opacity=0.9] (-2.8,-2.8) -- (-2.8,1.2);
	\draw [red,arr,opacity=0.9] (0,0) -- (0,4);
	\draw [red,arr,opacity=0.9] (4,0) -- (4,4);
	\draw [red,arr,opacity=0.9] (1.2,-2.8) -- (8,0);

	\draw [blue,arr,opacity=0.9] (0,0) -- (-2.8,-2.8);
	\draw [blue,arr,opacity=0.9] (-2.8,1.2) -- (1.2,-2.8);
	\draw [blue,arr,opacity=0.9] (0,4) -- (4,0);
	\draw [blue,arr,opacity=0.9] (4,4) -- (8,0);
	
	\draw [red,arr,opacity=0.9] (0,0) -- (0,4);
	\draw [red,arr,opacity=0.9] (-2.8,1.2) -- (4,4);
	\draw [red,arr,opacity=0.9] (-2.8,-2.8) -- (4,0);
	\draw [red,arr,opacity=0.9] (1.2,-2.8) -- (8,0);

	\draw [blue,arr,opacity=0.9] (0,0) -- (-2.8,-2.8);
	\draw [blue,arr,opacity=0.9] (0,4) -- (-2.8,1.2);
	\draw [blue,arr,opacity=0.9] (4,0) -- (1.2,-2.8);
	\draw [blue,arr,opacity=0.9] (4,4) -- (8,0);

\end{tikzpicture}
  \caption{The directed graph $\mathrm{PB}_3(\varpi_1+\varpi_2)$.}
\end{figure}
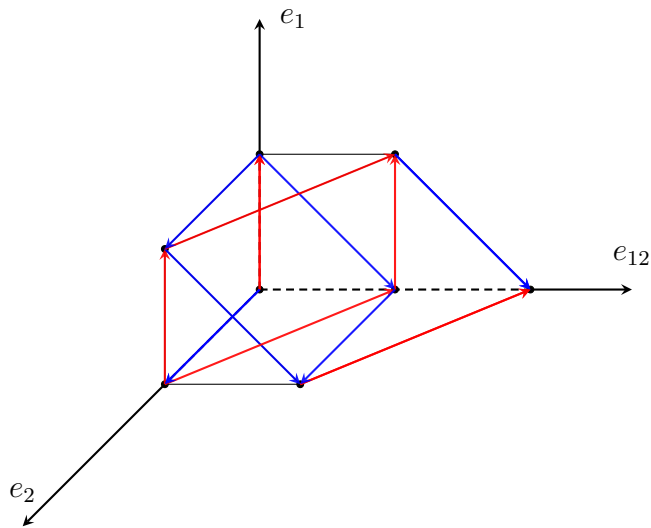

An edge-colored directed graph with vertices $\FFLV_n(\lambda)_\mathbb{Z}$ is called an \emph{FFLV-crystal graph}, if
\begin{itemize}
\item at each vertex, for a fixed color, it has at most one of the emanating edges in $\mathrm{PB}_n(\lambda)$ having this color;
\item the Stembridge axioms in \cite{St} (or equivalently, local conditions (A1)--(A4) in \cite{a2}) for any pair of colors $a$ and $b$ with $|a-b|=1$ are satisfied.
\end{itemize}

\begin{conjecture}
When $\lambda$ is regular, there exist $n!$ FFLV-crystal graphs on $\FFLV_n(\lambda)_{\mathbb{Z}}$. Each of crystal graphs is defined by fixing an element in $\mathfrak{S}_n$.
\end{conjecture}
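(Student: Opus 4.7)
The plan is to construct, for each $\sigma\in\mathfrak{S}_n$, a candidate FFLV-crystal graph using Kashiwara's tensor product rule applied to the Minkowski decomposition of Theorem \ref{Thm:Main}, and then to argue that for regular $\lambda$ the resulting $n!$ graphs are distinct and (more ambitiously) exhaust all FFLV-crystal graphs.

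First I would form the tensor product crystal
$$B_\sigma := \mathcal{L}_{\bi^{\sigma(1)}}(\lambda_{\sigma(1)}\varpi_{\sigma(1)})_{\mathbb{Z}}\otimes\cdots\otimes\mathcal{L}_{\bi^{\sigma(n)}}(\lambda_{\sigma(n)}\varpi_{\sigma(n)})_{\mathbb{Z}},$$
with each factor carrying its Lusztig crystal structure. The connected component of $B_\sigma$ containing the tensor of highest weight vertices is abstractly isomorphic to $B(\lambda)$. By Theorem \ref{Thm:Main}, the Minkowski addition map $\Phi_\sigma(\bz^{(1)},\ldots,\bz^{(n)}):=\sum_k \bz^{(k)}$ is surjective onto $\FFLV_n(\lambda)_{\mathbb{Z}}$, and since both sides have cardinality $\dim V(\lambda)$, its restriction to the connected component is a bijection. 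Transporting the crystal structure through $\Phi_\sigma$ produces an edge-colored digraph $G_\sigma$ on $\FFLV_n(\lambda)_{\mathbb{Z}}$. The tensor product rule modifies exactly one factor $\bz^{(k)}$ of the preimage, replacing it with $f_{a,k}(\bz^{(k)})$, and the difference $f_{a,k}(\bz^{(k)})-\bz^{(k)}$ is precisely one of the elementary moves $-\delta_{a+1,j}+\delta_{a,j}$, $-\delta_{i,a-1}+\delta_{i,a}$, or $\delta_{k,k}$ appearing in the paper's definition of $f_{a,k}$. Hence the corresponding edge of $G_\sigma$ is also an edge of $\mathrm{PB}_n(\lambda)$, and Stembridge's axioms hold automatically because $G_\sigma$ is, by construction, a Kashiwara crystal for $V(\lambda)$.

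For distinctness of the $n!$ graphs when $\lambda$ is regular, I would compare $G_\sigma$ and $G_\tau$ for $\sigma\neq\tau$ by tracking which Lusztig summand absorbs the first $f_a$-move at a branching vertex. Regularity guarantees that every summand $\mathcal{L}_{\bi^k}(\lambda_k\varpi_k)$ contributes strictly positive $\varphi_a$-values near the highest weight, so different tensor orderings realize different edge choices at some vertex where two summands tie under the $\varphi/\varepsilon$-comparison. Writing $\sigma$ as a product of adjacent transpositions and tracing the effect on crystal operators reduces the general case to a single swap, which can be handled directly; the rank-two checks in Section \ref{Sec:5} illustrate the mechanism concretely.

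The hardest part is the exhaustion statement, namely that every FFLV-crystal graph is some $G_\sigma$. My plan is a rigidity argument: the outgoing $f_a$-edge at the highest weight $\bz=0$ is forced to be $0\to\delta_{a,a}$ by the formulas for $f_{a,k}$, so the root of the crystal graph is determined. Thereafter, at each branching vertex of $\mathrm{PB}_n(\lambda)$ the Stembridge axioms applied to adjacent color pairs $(a,b)$ with $|a-b|=1$ should constrain the admissible choice in terms of previous choices, and the global consistency of such propagation should be controllable by an ordering on $\{1,\ldots,n\}$ that records which fundamental summand is accessed first by each color. Matching this ordering with an element of $\mathfrak{S}_n$ would yield exactly $n!$ structures. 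I expect this rigidity to require a delicate combinatorial analysis of the rhombic tilings and dual Reineke crossings from Section \ref{Sec:4}, since these encode precisely which summand can be modified at which FFLV lattice point.
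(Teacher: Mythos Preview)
The statement you are addressing is a \emph{conjecture}: the paper does not prove it. Section~\ref{Sec:5} only offers supporting evidence in the case $n=2$, by explicitly constructing two FFLV-crystal graphs $B^{>}(a,b)$ and $B^{<}(a,b)$ and verifying Stembridge's axioms via a hand-built bijection $\kappa$ with the known crystal $K(a,b)$ of \cite{a2}. No argument for general $n$, and no exhaustion claim even for $n=2$, is given. So there is no ``paper's own proof'' to compare against; your proposal is an attempt to prove an open statement.

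Your construction has a genuine gap at the bijectivity step. You assert that $\Phi_\sigma$ is surjective onto $\FFLV_n(\lambda)_{\mathbb{Z}}$ (true, by Theorem~\ref{Thm:Main}) and that the highest-weight connected component of $B_\sigma$ has the same cardinality as $\FFLV_n(\lambda)_{\mathbb{Z}}$ (also true). But surjectivity of $\Phi_\sigma$ on the \emph{full} product does not imply surjectivity of its restriction to the connected component, and without that the cardinality match yields nothing. Concretely: a given $\bz\in\FFLV_n(\lambda)_{\mathbb{Z}}$ may admit Minkowski decompositions $\bz=\sum_k\bz^{(k)}$ none of which lie in the highest-weight component of $B_\sigma$. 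You would need an independent argument that the addition map is injective (or surjective) on that component, and this is exactly the nontrivial content. The paper's explicit $n=2$ construction does not proceed via tensor products at all; it builds the monochromatic paths geometrically and then checks the axioms, precisely because a clean transport argument is not available.

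A further warning: the paper's description of the conjectural crystal (the paragraph following the conjecture) is not Kashiwara's tensor product rule but an \emph{iterative priority rule} --- at each vertex one chooses, among the allowed $f_{a,k}$-edges, the one with $k$ maximal for the ordering determined by $\sigma$. This need not coincide with the $\varphi/\varepsilon$-comparison governing the tensor product, so even if your $\Phi_\sigma$ were a bijection, the resulting graph $G_\sigma$ might not match the FFLV-crystal the paper has in mind. The remark after Proposition~\ref{crystal>} that the FFLV-crystals do not embed into the natural $B(\infty)$-type structure is a hint that these crystals are not obtained by any straightforward transport of Lusztig parametrisations.
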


To be precise, this choice of $\sigma\in\mathfrak{S}_n$ defines a total ordering on the fundamental weights by setting 
$$\varpi_{\sigma(1)}>\varpi_{\sigma(2)}>\cdots>\varpi_{\sigma(n)}.$$

Such a choice of the total ordering means that there exists an iterative process such that at each vertex $\bz\in\FFLV_n(\lambda)_{\mathbb{Z}}$, if there are several, allowed due to this process, edges in $\mathrm{PB}_n(\lambda)$ emanating from this vertex to $f_{a,k_1}(\bz),\cdots,f_{a,k_r}(\bz)$, we choose the edge towards $f_{a,k}(\bz)$ where $k$ is the maximal element in $\{k_1,\cdots,k_r\}$ with respect to the fixed total ordering. Our refined conjecture affirms that after making such choices for all vertices in $\FFLV_n(\lambda)_{\mathbb{Z}}$, there exists a unique FFLV-crystal graph.

\subsection{Small rank cases}
We study this conjecture in the case of $\mathrm{SL}_3$. In this case $\bi^1=(1,2,1)$ and $\bi^2=(2,1,2)$. Let $\lambda=a\varpi_1+b\varpi_2\in\Lambda^+$. We will describe two crystal graphs on $\mathrm{FFLV}_2(\lambda)_{\mathbb{Z}}$. 

First we choose $\sigma=\mathrm{id}\in\mathfrak{S}_2$, it corresponds to a total ordering $\varpi_1>\varpi_2$. 

%The FFLV-crystal graph described above contains concatenations of the Lusztig crystals $\mathcal{L}_{\bi^1}(a\varpi_1)$ and $\mathcal{L}_{\bi^2}(b\varpi_2)$. 
Let $B^{>}(a, b)$ be the edge-colored directed graph on lattice points of $\FFLV_2(a\varpi_1+b\varpi_2)_\mathbb{Z}$ such that its monochromatic paths are defined in the following way:

\begin{enumerate}
\item[(1)] For the color $1$, we take the ''sky'' paths depicted below and their translations by vectors $(-k, k, 0)$, $k=1,\ldots$ (precisely we takes the parts of translated paths which belong to $\FFLV_2(\lambda)$).
\item[(2)] For the color $2$, we take the ''ground'' paths depicted below and their
translations by vectors $(k, 0, k)$,  $k=1, \ldots$ (precisely we takes the parts of translated paths which belong to $\FFLV_2(\lambda)$).
\end{enumerate}

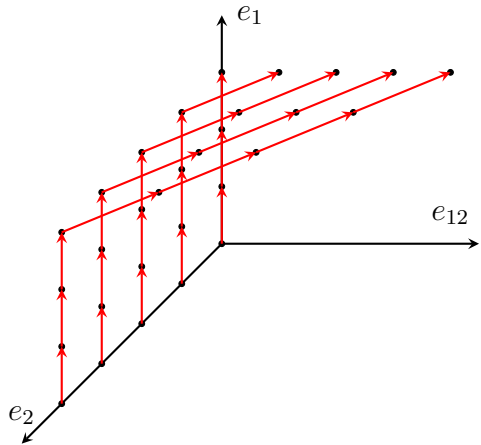
\begin{figure}[H]
\begin{tikzpicture}[scale=.38, arr/.style={thick,->,>=stealth},]
   
    \node at (-7,-6) {{$e_2$}};    
    \node at (1,8) {{$e_1$}};
    \node at (8,1) {{$e_{12}$}};

	\draw [arr] (0,0) -- (9,0);
	\draw [arr] (0,0) -- (0,8);
	\draw [arr] (0,0) -- (-7,-7);
	\draw[fill] (0,0) circle [radius=0.1];	
	\draw[fill] (0,2) circle [radius=0.1];	
	\draw[fill] (0,4) circle [radius=0.1];	
	\draw[fill] (0,6) circle [radius=0.1];	
	\draw[fill] (-1.4,-1.4) circle [radius=0.1];
	\draw[fill] (-2.8,-2.8) circle [radius=0.1];
	\draw[fill] (-4.2,-4.2) circle [radius=0.1];
	\draw[fill] (-5.6,-5.6) circle [radius=0.1];
	\draw[fill] (-1.4,0.6) circle [radius=0.1];
	\draw[fill] (-2.8,-0.8) circle [radius=0.1];
	\draw[fill] (-4.2,-2.2) circle [radius=0.1];
	\draw[fill] (-5.6,-3.6) circle [radius=0.1];
	\draw[fill] (-1.4,2.6) circle [radius=0.1];
	\draw[fill] (-2.8,1.2) circle [radius=0.1];
	\draw[fill] (-4.2,-0.2) circle [radius=0.1];
	\draw[fill] (-5.6,-1.6) circle [radius=0.1];
	\draw[fill] (-1.4,4.6) circle [radius=0.1];
	\draw[fill] (-2.8,3.2) circle [radius=0.1];
	\draw[fill] (-4.2,1.8) circle [radius=0.1];
	\draw[fill] (-5.6,0.4) circle [radius=0.1];
	\draw[fill] (2,6) circle [radius=0.1];
	\draw[fill] (4,6) circle [radius=0.1];
	\draw[fill] (6,6) circle [radius=0.1];
	\draw[fill] (8,6) circle [radius=0.1];
	\draw[fill] (0.6,4.6) circle [radius=0.1];
	\draw[fill] (2.6,4.6) circle [radius=0.1];
	\draw[fill] (4.6,4.6) circle [radius=0.1];
	\draw[fill] (-0.8,3.2) circle [radius=0.1];
	\draw[fill] (1.2,3.2) circle [radius=0.1];
	\draw[fill] (-2.2,1.8) circle [radius=0.1];

	\draw [red,arr] (-1.4,-1.4) -- (-1.4,0.6);
	\draw [red,arr] (-1.4,0.6) -- (-1.4,2.6);
	\draw [red,arr] (-1.4,2.6) -- (-1.4,4.6);
	\draw [red,arr] (-2.8,-2.8) -- (-2.8,-0.8);
	\draw [red,arr] (-2.8,-0.8) -- (-2.8,1.2);
	\draw [red,arr] (-2.8,1.2) -- (-2.8,3.2);
	\draw [red,arr] (-4.2,-4.2) -- (-4.2,-2.2);
	\draw [red,arr] (-4.2,-2.2) -- (-4.2,-0.2);
	\draw [red,arr] (-4.2,-0.2) -- (-4.2,1.8);
	\draw [red,arr] (-5.6,-5.6) -- (-5.6,-3.6);
	\draw [red,arr] (-5.6,-3.6) -- (-5.6,-1.6);
	\draw [red,arr] (-5.6,-1.6) -- (-5.6,0.4);
	\draw [red,arr] (-1.4,4.6) -- (2,6);
	\draw [red,arr] (-2.8,3.2) -- (0.6,4.6);
	\draw [red,arr] (0.6,4.6) -- (4,6);
	\draw [red,arr] (-4.2,1.8) -- (-0.8,3.2);
	\draw [red,arr] (-0.8,3.2) -- (2.6,4.6);
	\draw [red,arr] (2.6,4.6) -- (6,6);
	\draw [red,arr] (-5.6,0.4) -- (-2.2,1.8);
	\draw [red,arr] (-2.2,1.8) -- (1.2,3.2);
	\draw [red,arr] (1.2,3.2) -- (4.6,4.6);
	\draw [red,arr] (4.6,4.6) -- (8,6);
	\draw [red,arr] (0,0) -- (0,2);
	\draw [red,arr] (0,2) -- (0,4);
	\draw [red,arr] (0,4) -- (0,6);

\end{tikzpicture}
  \caption{''Sky'' crystal paths of color $1$, $a=3$ and $b=4$.}
\end{figure}

\begin{figure}[H]
\begin{tikzpicture}[scale=.38, arr/.style={thick,->,>=stealth},]
   
    \node at (-7,-6) {{$e_2$}};    
    \node at (1,8) {{$e_1$}};
    \node at (16,1) {{$e_{12}$}};
%    \node at (-8,-3) {{$a=3$}};   
%    \node at (4,7.5) {{$b=4$}};   

	\draw [arr] (0,0) -- (16,0);
	\draw [arr] (0,0) -- (0,8);
	\draw [arr] (0,0) -- (-7,-7);
	\draw[fill] (0,0) circle [radius=0.1];	
	\draw[fill] (0,2) circle [radius=0.1];	
	\draw[fill] (0,4) circle [radius=0.1];	
	\draw[fill] (0,6) circle [radius=0.1];	
	
	\draw[fill] (2,0) circle [radius=0.1];	
	\draw[fill] (4,0) circle [radius=0.1];	
	\draw[fill] (6,0) circle [radius=0.1];	
	\draw[fill] (8,0) circle [radius=0.1];	
	\draw[fill] (10,0) circle [radius=0.1];	
	\draw[fill] (12,0) circle [radius=0.1];	
	\draw[fill] (14,0) circle [radius=0.1];	
	
	\draw[fill] (2,2) circle [radius=0.1];	
	\draw[fill] (4,2) circle [radius=0.1];	
	\draw[fill] (6,2) circle [radius=0.1];	
	\draw[fill] (8,2) circle [radius=0.1];	
	\draw[fill] (10,2) circle [radius=0.1];	
	\draw[fill] (12,2) circle [radius=0.1];	
	
	\draw[fill] (2,4) circle [radius=0.1];	
	\draw[fill] (4,4) circle [radius=0.1];	
	\draw[fill] (6,4) circle [radius=0.1];	
	\draw[fill] (8,4) circle [radius=0.1];	
	\draw[fill] (10,4) circle [radius=0.1];	
	
	\draw[fill] (2,6) circle [radius=0.1];	
	\draw[fill] (4,6) circle [radius=0.1];	
	\draw[fill] (6,6) circle [radius=0.1];	
	\draw[fill] (8,6) circle [radius=0.1];	
	
	\draw[fill] (0.6,-1.4) circle [radius=0.1];	
	\draw[fill] (2.6,-1.4) circle [radius=0.1];	
	\draw[fill] (4.6,-1.4) circle [radius=0.1];	
	\draw[fill] (6.6,-1.4) circle [radius=0.1];	
	\draw[fill] (8.6,-1.4) circle [radius=0.1];	
	\draw[fill] (10.6,-1.4) circle [radius=0.1];	

	\draw[fill] (-0.8,-2.8) circle [radius=0.1];	
	\draw[fill] (1.2,-2.8) circle [radius=0.1];	
	\draw[fill] (3.2,-2.8) circle [radius=0.1];	
	\draw[fill] (5.2,-2.8) circle [radius=0.1];	
	\draw[fill] (7.2,-2.8) circle [radius=0.1];	

	\draw[fill] (-2.2,-4.2) circle [radius=0.1];	
	\draw[fill] (-0.2,-4.2) circle [radius=0.1];	
	\draw[fill] (1.8,-4.2) circle [radius=0.1];	
	\draw[fill] (3.8,-4.2) circle [radius=0.1];	
		
	\draw[fill] (-3.6,-5.6) circle [radius=0.1];	
	\draw[fill] (-1.6,-5.6) circle [radius=0.1];	
	\draw[fill] (0.4,-5.6) circle [radius=0.1];

	\draw[fill] (-1.4,-1.4) circle [radius=0.1];
	\draw[fill] (-2.8,-2.8) circle [radius=0.1];
	\draw[fill] (-4.2,-4.2) circle [radius=0.1];
	\draw[fill] (-5.6,-5.6) circle [radius=0.1];

	\draw [blue,arr] (0,0) -- (-1.4,-1.4);
	\draw [blue,arr] (-1.4,-1.4) -- (-2.8,-2.8);
	\draw [blue,arr] (-2.8,-2.8) -- (-4.2,-4.2);
	\draw [blue,arr] (-4.2,-4.2) -- (-5.6,-5.6);

	\draw [blue,arr] (2,0) -- (0.6,-1.4);
	\draw [blue,arr] (0.6,-1.4) -- (-0.8,-2.8);
	\draw [blue,arr] (-0.8,-2.8) -- (-2.2,-4.2);
	\draw [blue,arr] (-2.2,-4.2) -- (-3.6,-5.6);

	\draw [blue,arr] (4,0) -- (2.6,-1.4);
	\draw [blue,arr] (2.6,-1.4) -- (1.2,-2.8);
	\draw [blue,arr] (1.2,-2.8) -- (-0.2,-4.2);
	\draw [blue,arr] (-0.2,-4.2) -- (-1.6,-5.6);

	\draw [blue,arr] (6,0) -- (4.6,-1.4);
	\draw [blue,arr] (4.6,-1.4) -- (3.2,-2.8);
	\draw [blue,arr] (3.2,-2.8) -- (1.8,-4.2);
	\draw [blue,arr] (1.8,-4.2) -- (0.4,-5.6);

	\draw [blue,arr] (8,0) -- (6.6,-1.4);
	\draw [blue,arr] (6.6,-1.4) -- (5.2,-2.8);
	\draw [blue,arr] (5.2,-2.8) -- (3.8,-4.2);

	\draw [blue,arr] (10,0) -- (8.6,-1.4);
	\draw [blue,arr] (8.6,-1.4) -- (7.2,-2.8);

	\draw [blue,arr] (12,0) -- (10.6,-1.4);

	\draw [blue,arr] (0,2) -- (2,0);
	
	\draw [blue,arr] (0,4) -- (2,2);
	\draw [blue,arr] (2,2) -- (4,0);

	\draw [blue,arr] (0,6) -- (2,4);
	\draw [blue,arr] (2,4) -- (4,2);
	\draw [blue,arr] (4,2) -- (6,0);

	\draw [blue,arr] (2,6) -- (4,4);
	\draw [blue,arr] (4,4) -- (6,2);
	\draw [blue,arr] (6,2) -- (8,0);

	\draw [blue,arr] (4,6) -- (6,4);
	\draw [blue,arr] (6,4) -- (8,2);
	\draw [blue,arr] (8,2) -- (10,0);

	\draw [blue,arr] (6,6) -- (8,4);
	\draw [blue,arr] (8,4) -- (10,2);
	\draw [blue,arr] (10,2) -- (12,0);
	
	\draw [blue,arr] (8,6) -- (10,4);
	\draw [blue,arr] (10,4) -- (12,2);
	\draw [blue,arr] (12,2) -- (14,0);
\end{tikzpicture}
  \caption{''Ground'' crystal paths of color $2$, $a=3$ and $b=4$.}
\end{figure}
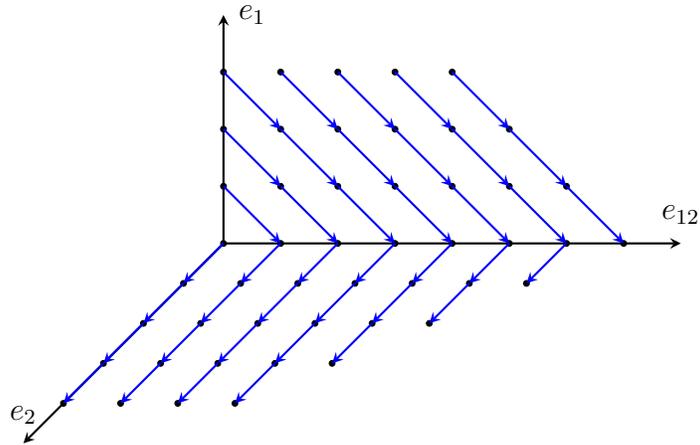

\begin{proposition}\label{crystal>}
The edge-colored graph $B^{>}(a, b)$ is an FFLV-crystal graph.
\end{proposition}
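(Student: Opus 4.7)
My strategy is to first make each edge of $B^{>}(a,b)$ explicit as a Kashiwara operator on $\mathrm{PB}_2(\lambda)$, then to verify the two defining properties (at most one outgoing edge per color and the Stembridge axioms) in turn.

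Reading off the direction vectors from the two displayed figures and matching them to the piecewise-linear formulas recalled at the beginning of Section~5.1, each vertical red arrow realises $f_{1,1}$ (shift $\delta_{1,1}$), each diagonal red arrow realises $f_{1,2}$ (shift $-\delta_{2,2}+\delta_{1,2}$), each diagonal blue arrow realises $f_{2,1}$ (shift $-\delta_{1,1}+\delta_{1,2}$), and each southwest blue arrow realises $f_{2,2}$ (shift $\delta_{2,2}$). Thus $B^{>}(a,b)$ is automatically a subgraph of $\mathrm{PB}_2(\lambda)$. Combining the base paths with their translations by $(-k,k,0)$ and $(k,0,k)$ for $k\geq 1$ and intersecting with $\FFLV_2(\lambda)$ yields the following explicit rule at a lattice point $\bz=(z_1,z_{12},z_2)$: the color-$1$ edge is $f_{1,1}$ when $z_1+z_{12}<a$ and $f_{1,2}$ when $z_1+z_{12}\geq a$ with $z_2\geq 1$; the color-$2$ edge is $f_{2,1}$ when $z_1>z_2$ and $f_{2,2}$ when $z_1\leq z_2$ (subject to $z_2<b$ and $z_1+z_{12}+z_2<a+b$). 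Since these two cases for each color are mutually exclusive, at most one outgoing edge of each color emanates from any vertex, which is the first defining condition of an FFLV-crystal graph.

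For the Stembridge axioms, there is only one pair of adjacent colors $(1,2)$ to check. I would stratify $\FFLV_2(\lambda)_\mathbb{Z}$ by the signs of $z_1+z_{12}-a$ and $z_1-z_2$; each of the four resulting open regions carries a uniform pair of active shift vectors, and the Stembridge diamonds or hexagons close inside each region by a short direct computation with those vectors. The main obstacle I expect lies on the transition hyperplanes $z_1+z_{12}=a$ and $z_1=z_2$, where the priority rule switches which $k$ carries the color-$c$ operator. There a given vertex $\bz$ may receive its color-$1$ and color-$2$ operators from different summands of the Minkowski decomposition $\FFLV_2(\lambda)=\mathcal{L}_{\bi^1}(a\varpi_1)+\mathcal{L}_{\bi^2}(b\varpi_2)$ furnished by Theorem~\ref{Thm:Main}; I would choose a Minkowski decomposition $\bz=\bx+\by$ with $\bx\in\mathcal{L}_{\bi^1}(a\varpi_1)_\mathbb{Z}$ and $\by\in\mathcal{L}_{\bi^2}(b\varpi_2)_\mathbb{Z}$ and exploit that each Lusztig polytope summand already carries its own Stembridge-compatible crystal structure from Section~5.1, reducing the verification at $\bz$ to a finite case analysis governed by the FFLV inequalities $z_1\leq a$, $z_2\leq b$, and $z_1+z_{12}+z_2\leq a+b$.
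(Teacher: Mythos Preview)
Your approach differs from the paper's: instead of verifying the Stembridge axioms directly, the paper constructs an explicit edge-colored bijection $\kappa$ from the model crystal $K(a,b)$ of \cite[Theorem~3.1]{a2} to $B^{>}(a,b)$, matching critical points and monochromatic paths, so that the axioms are inherited from $K(a,b)$. Your explicit description of the operators and the switching loci $z_1+z_{12}=a$ and $z_1=z_2$ is correct and is a useful reformulation, and the check of the first defining condition (uniqueness of the outgoing edge of each color) is fine.

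The gap is in the Stembridge part. On the transition hyperplanes you propose to ``choose a Minkowski decomposition $\bz=\bx+\by$'' and use that each Lusztig summand carries its own crystal structure. But Theorem~\ref{Thm:Main} is a statement about Minkowski sums of \emph{polytopes}; it does not say that the crystal structure on $B^{>}(a,b)$ is obtained from those on the summands, and a Minkowski sum of lattice points does not in general transport Stembridge-compatibility. Concretely, at a point with $z_1+z_{12}=a$ and $z_1=z_2$ the color-$1$ operator comes from $f_{1,2}$ while the color-$2$ operator comes from $f_{2,2}$, and both act on the ``$\varpi_2$--side''; your proposed reduction gives no control over the resulting $\varepsilon$/$\varphi$ changes needed to decide between a square and a hexagon. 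The cleanest fix, if you want to avoid the paper's bijection, is to observe that your switching rule is exactly the tensor-product (signature) rule for $B(a\varpi_1)\otimes B(b\varpi_2)$ under the identification of the lattice points via Theorem~\ref{Thm:Main}; once that is established, the Stembridge axioms follow from the general fact that tensor products of normal crystals are normal. Without that identification (or a complete case analysis at both transition hyperplanes and their intersection), the argument is incomplete.
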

%satisfies axioms (A1)-(A4) in \cite{a2}, hence
The corresponding FFLV-crystals graphs $B^>(1,1)$ and $B^>(2,2)$ are depicted below. One can verify immediately that they are indeed FFLV-crystals graphs.

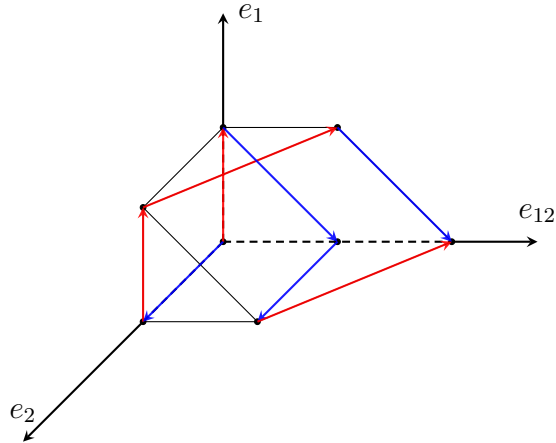
\begin{figure}
\begin{tikzpicture}[scale=.38, arr/.style={thick,->,>=stealth},dsh/.style={thick,densely dashed},]
   
    \node at (-7,-6) {{$e_2$}};    
    \node at (1,8) {{$e_1$}};
    \node at (11,1) {{$e_{12}$}};
%    \node at (-8,-3) {{$a=3$}};   
%    \node at (4,7) {{$b=4$}};   
	\draw [dsh] (0,0) -- (0,4);
	\draw [dsh] (0,0) -- (8,0);
	\draw [dsh] (0,0) -- (-2.8,-2.8);
	\draw [arr] (8,0) -- (11,0);
	\draw [arr] (0,4) -- (0,8);
	\draw [arr] (-2.8,-2.8) -- (-7,-7);
	\draw (0,4) -- (4,4) -- (-2.8,1.2) -- (0,4);
	\draw (-2.8,1.2) -- (-2.8,-2.8) -- (1.2,-2.8) -- (-2.8,1.2);
	\draw (4,4) -- (8,0) -- (1.2,-2.8);
	
	\draw[fill] (0,0) circle [radius=0.1];	
	\draw[fill] (0,4) circle [radius=0.1];	
	\draw[fill] (4,0) circle [radius=0.1];	
	\draw[fill] (4,4) circle [radius=0.1];	
	\draw[fill] (-2.8,1.2) circle [radius=0.1];	
	\draw[fill] (-2.8,-2.8) circle [radius=0.1];	
	\draw[fill] (8,0) circle [radius=0.1];	
	\draw[fill] (1.2,-2.8) circle [radius=0.1];	
	\draw [red,arr,opacity=0.9] (-2.8,-2.8) -- (-2.8,1.2);
	\draw [red,arr,opacity=0.9] (0,0) -- (0,4);
	\draw [blue,arr,opacity=0.9] (4,0) -- (1.2,-2.8);
	\draw [red,arr,opacity=0.9] (1.2,-2.8) -- (8,0);

	\draw [blue,arr,opacity=0.9] (0,0) -- (-2.8,-2.8);
	\draw [red,arr,opacity=0.9] (-2.8,1.2) -- (4,4);
	\draw [blue,arr,opacity=0.9] (0,4) -- (4,0);
	\draw [blue,arr,opacity=0.9] (4,4) -- (8,0);
\end{tikzpicture}
  \caption{The crystal $B^>(1,1)$ with respect to $\varpi_1>\varpi_2$.}
\end{figure}

\begin{figure}[H]
\begin{tikzpicture}[scale=.38, arr/.style={thick,->,>=stealth},dsh/.style={thick,densely dashed},]
   
    \node at (-8,-7) {{$e_2$}};    
    \node at (1,11) {{$e_1$}};
    \node at (18,1) {{$e_{12}$}};
%    \node at (-8,-3) {{$a=3$}};   
%    \node at (4,7) {{$b=4$}};   
	\draw [dsh] (0,0) -- (0,8);
	\draw [dsh] (0,0) -- (16,0);
	\draw [dsh] (0,0) -- (-5.6,-5.6);
	\draw [arr] (16,0) -- (18,0);
	\draw [arr] (0,8) -- (0,11);
	\draw [arr] (-5.6,-5.6) -- (-8,-8);
	\draw (0,8) -- (8,8) -- (-5.6,2.4) -- (0,8);
	\draw (-5.6,2.4) -- (-5.6,-5.6) -- (2.4,-5.6) -- (-5.6,2.4);
	\draw (8,8) -- (16,0) -- (2.4,-5.6);

\draw[fill] (0,0) circle [radius=0.1];
\draw[fill] (4,0) circle [radius=0.1];
\draw[fill] (8,0) circle [radius=0.1];
\draw[fill] (12,0) circle [radius=0.1];
\draw[fill] (16,0) circle [radius=0.1];
\draw[fill] (-2.8,-2.8) circle [radius=0.1];
\draw[fill] (1.2,-2.8) circle [radius=0.1];
\draw[fill] (5.2,-2.8) circle [radius=0.1];
\draw[fill] (9.2,-2.8) circle [radius=0.1];
\draw[fill] (-5.6,-5.6) circle [radius=0.1];
\draw[fill] (-1.6,-5.6) circle [radius=0.1];
\draw[fill] (2.4,-5.6) circle [radius=0.1];
\draw[fill] (0,4) circle [radius=0.1];
\draw[fill] (4,4) circle [radius=0.1];
\draw[fill] (8,4) circle [radius=0.1];
\draw[fill] (12,4) circle [radius=0.1];
\draw[fill] (-2.8,1.2) circle [radius=0.1];
\draw[fill] (1.2,1.2) circle [radius=0.1];
\draw[fill] (5.2,1.2) circle [radius=0.1];
\draw[fill] (-5.6,-1.6) circle [radius=0.1];
\draw[fill] (-1.6,-1.6) circle [radius=0.1];
\draw[fill] (0,8) circle [radius=0.1];
\draw[fill] (4,8) circle [radius=0.1];
\draw[fill] (8,8) circle [radius=0.1];
\draw[fill] (-2.8,5.2) circle [radius=0.1];
\draw[fill] (1.2,5.2) circle [radius=0.1];
\draw[fill] (-5.6,2.4) circle [radius=0.1];

	\draw [red,arr,opacity=0.9] (0,0) -- (0,4);
	\draw [red,arr,opacity=0.9] (0,4) -- (0,8);
	
	\draw [red,arr,opacity=0.9] (-2.8,1.2) -- (-2.8,5.2);
	\draw [red,arr,opacity=0.9] (-2.8,-2.8) -- (-2.8,1.2);
	
	\draw [red,arr,opacity=0.9] (-5.6,-5.6) -- (-5.6,-1.6);
	\draw [red,arr,opacity=0.9] (-5.6,-1.6) -- (-5.6,2.4);
	
	\draw [red,arr,opacity=0.9] (4,0) -- (4,4);
	\draw [red,arr,opacity=0.9] (1.2,-2.8) -- (1.2,1.2);
	\draw [red,arr,opacity=0.9] (-1.6,-5.6) -- (-1.6,-1.6);
	
	\draw [red,arr,opacity=0.9]  (-5.6,2.4) -- (1.2,5.2);
	\draw [red,arr,opacity=0.9]  (1.2,5.2) -- (8,8);
	
	\draw [red,arr,opacity=0.9]  (-2.8,5.2) -- (4,8);
	
	\draw [red,arr,opacity=0.9]  (2.4,-5.6) -- (9.2,-2.8);
	\draw [red,arr,opacity=0.9]  (9.2,-2.8) -- (16,0);
	
	\draw [red,arr,opacity=0.9]  (5.2,-2.8) -- (12,0);
	
	\draw [red,arr,opacity=0.9] (1.2,1.2) -- (8,4);	

	\draw [red,arr,opacity=0.9] (-1.6,-1.6) -- (5.2,1.2);
	\draw [red,arr,opacity=0.9] (5.2,1.2) -- (12,4);

	\draw [blue,arr,opacity=0.9] (8,8) -- (12,4);
	\draw [blue,arr,opacity=0.9] (12,4) -- (16,0);
	\draw [blue,arr,opacity=0.9] (4,8) -- (8,4);
	\draw [blue,arr,opacity=0.9] (8,4) -- (12,0);
	\draw [blue,arr,opacity=0.9] (0,8) -- (4,4);
	\draw [blue,arr,opacity=0.9] (4,4) -- (8,0);
	\draw [blue,arr,opacity=0.9] (0,4) -- (4,0);
	\draw [blue,arr,opacity=0.9] (0,0) -- (-2.8,-2.8);
	\draw [blue,arr,opacity=0.9] (-2.8,-2.8) -- (-5.6,-5.6);
	\draw [blue,arr,opacity=0.9] (4,0) -- (1.2,-2.8);
	\draw [blue,arr,opacity=0.9] (1.2,-2.8) -- (-1.6,-5.6);
	\draw [blue,arr,opacity=0.9] (8,0) -- (5.2,-2.8);
	\draw [blue,arr,opacity=0.9] (5.2,-2.8) -- (2.4,-5.6);
	\draw [blue,arr,opacity=0.9] (12,0) -- (9.2,-2.8);

	\draw [blue,arr,opacity=0.9] (1.2,5.2) -- (5.2,1.2);
	\draw [blue,arr,opacity=0.9] (-2.8,1.2) -- (-5.6,-1.6);
	\draw [blue,arr,opacity=0.9] (-2.8,5.2) -- (1.2,1.2);
	\draw [blue,arr,opacity=0.9] (1.1,1.3) -- (-1.75,-1.45);

\end{tikzpicture}
  \caption{The crystal $B^>(2,2)$ with respect to $\varpi_1>\varpi_2$.}
\end{figure}
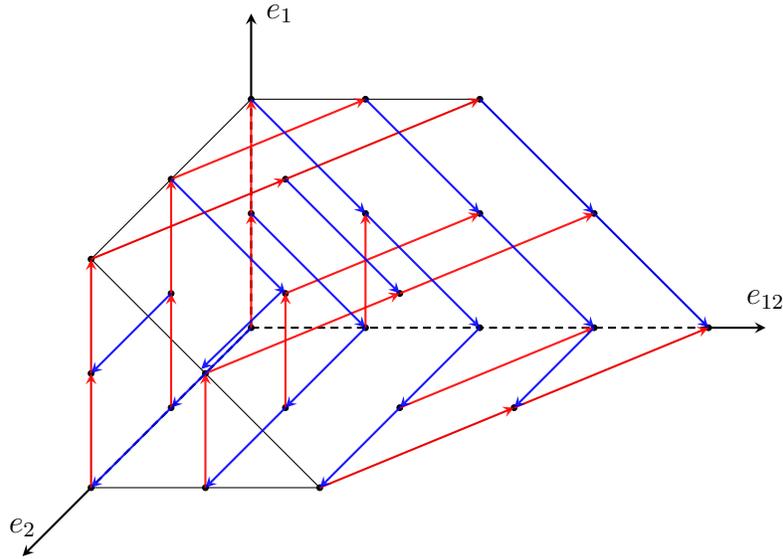

\begin{proof}[Proof of Proposition \ref{crystal>}]
For the proof we take the crystal graph $K(a, b)$ constructed in \cite[Theorem 3.1]{a2}  which satisfies (A1)-(A4) and establish a crystal bijection $\kappa: K(a, b)\to B^{>}(a, b)$.

Let $H$ be the linear hyperplane in $\mathbb{R}^{\Delta_+}$ having normal vector $(1,0,-1)$ and denote by $C(a, b)$ the set of lattice points in the intersection
\[
\FFLV_2(a\varpi_1+b\varpi_2)\cap H.
\]

Without loss of generality we assume that $b\geq a$ (the other case can be treated similarly). 

The set $C(a, b)$ is the set of critical points (for the definition see \cite{a2}) in $B^{>}(a, b)$, which has cardinality $(a+1)(b+1)$: it is constituted from the lattice points of union the rectangular of size $a\times (b-a)$ and the half of the rectangular of size $2a\times a$, along the common edge of length $a$.

The crystal $K(a,b)$ has the same amount of critical points (\cite[Corollary 3.2]{a2}). 

By the construction of $K(a,b)$ in the beginning of \cite[Section 3]{a2}, the critical points in $K(a,b)$ can be identified with the integral points in the rectangular $[0,a]\times [0,b]$. The map $\kappa$ sends the corners of the rectangle to the following corners in $C(a, b)$: $(0,0,0)$, $(0, a, 0)$, $(0, a+b, 0)$, and $(a, b-a, a)$ (precisely, $\kappa^{-1}(0, a+b, 0)$ is opposite to $\kappa^{-1}(0,0,0)$ and   $\kappa^{-1}(a, b-a, a)$ is opposite to $\kappa^{-1}(0,a,0)$). 

We first define the image of the $(b+1)$-copies (labeled by $0,1,\cdots,b$) of $K(a,0)$ under the map $\kappa$. The image of the $0$-th copy of $K(a,0)$ is the subgraph of $B^{>}(a, b)$ bounded by the path from $(0,0,0)$ to $(a,0,0)$ of color $1$, the path from $(a, 0, 0)$ to $(0, a, 0)$ of color $2$, and the critical points on the segment from $(0,0,0)$ to $(0,a,0)$. The image of the $m$-th copy, where $1\leq m\le b$, is obtained as follows. We denote by $\pi_m$ the path in the set of "sky" crystal paths emanating from $(0,0,m)$. Consider the part of $\pi_m$ between its critical point and its endpoint: such a 1-color path, denoted by $\pi_m^+$, has length $a$. To each vertex in $\pi_m^+$, we engraft the part of the path of color $2$ emanating from this vertex until its critical point. The subgraph of $B^>(a, b)$ with this set of vertices at these parts of paths is the image of the $m$-th copy of $K(a,0)$ under $\kappa$. 
 
 It is easy to see from this construction that such defined subgraphs are isomorphic to $K(a,0)$ and they cover the set of critical points properly without overlapping.

We define the images of the $(a+1)$-copies (labeled by $0,1,\cdots,a$) of $K(0,b)$ under the map $\kappa$. For the image of the $0$-th copy of $K(0,b)$, we take the part of the path $\pi_b$ from its beginning to the critical point $(a,b-a,a)$; to each vertex in this path, we consider the path of color $2$ terminating at this vertex and take the part of such a path between its critical point and the endpoint. We finally take the subgraph of $B^>(a, b)$ with vertices on all such defined paths. This is the image of the $0$-th copy of $K(0,b)$. 

For the image of the $k$-th copy, where $1\leq k\leq a$, we consider the translation of the $1$-colored path $\pi_b$ by the vector $(-k,k,0)$ and take from it the part starting from its beginning to its critical point: such a path has length equal to $b$. To each vertex on such a path, we engraft the part of the 2-colored path terminating at this vertex from its critical point. The sub-crystal with such defined set of vertices is the image of the $k$-th copy $K(0,b)$. Since the 2-colored paths appearing in the construction are translations of those in the "ground" crystal paths, we get that such a sub-crystal is isomorphic to $K(0,b)$.  Note that these images of copies $K(0, b)$ cover the set of critical points properly without overlapping.

According to \cite[Theorem 3.1]{a2}, such defined map $\kappa$ is a crystal bijection, the proof terminates.
\end{proof}

\begin{remark} 
We can consider a kind of $B(\infty)$ crystal of the above form by sending $a$ and $b$ to $+\infty$. Then we will get the crystal graph on lattice points of the positive orthant, which has the monochromatic paths of color $1$ being vertical rays $\bz+\mathbb{R} e_1$, and the monochromatic path of color 2 being translations of the ''ground'' paths with $a=b=+\infty$.
Unfortunately, we can not embed any of FFLV-crystals to such kind of $B(\infty)$.
\end{remark}

The case $\sigma=(1,2)\in\mathfrak{S}_2$ corresponding to $\varpi_2>\varpi_1$. It can be treated similarly. Let $B^{<}(a, b)$ be an edge-colored graph on lattice points in $\FFLV_2(a\varpi_1+b\varpi_2)_\mathbb{Z}$ such that its monochromatic paths are defined in the following way:

\begin{enumerate}
\item[(1)]  For the color 1, we take the ''ground'' paths depicted below and their translations by the vector $(k,0,k)$, $k=1,\cdots$ (precisely we takes the parts of translated paths which belong to $\FFLV_2(a,b)$). 
\item[(2)] For the color 2, we take the ''wall'' paths depicted below and their translations by the vector $(0,k,-k)$, $k=1,\cdots$ (precisely we takes the parts of translated paths which belong to $\FFLV_2(a,b)$).
\end{enumerate}

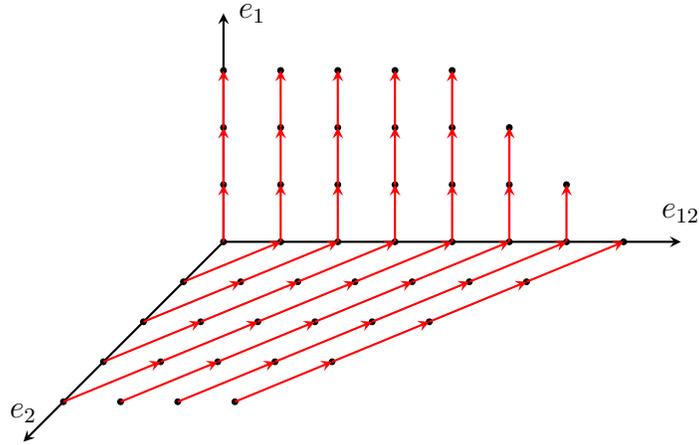
\begin{figure}[H]
\begin{tikzpicture}[scale=.38, arr/.style={thick,->,>=stealth},dsh/.style={thick,densely dashed},]   
    \node at (-7,-6) {{$e_2$}};    
    \node at (1,8) {{$e_1$}};
    \node at (16,1) {{$e_{12}$}};
%    \node at (-8,-3) {{$a=3$}};   
%    \node at (4,7) {{$b=4$}};   
	\draw [arr] (0,0) -- (16,0);
	\draw [arr] (0,0) -- (0,8);
	\draw [arr] (0,0) -- (-7,-7);

	\draw[fill] (0,0) circle [radius=0.1];	
	\draw[fill] (2,0) circle [radius=0.1];	
	\draw[fill] (4,0) circle [radius=0.1];	
	\draw[fill] (6,0) circle [radius=0.1];	
	\draw[fill] (8,0) circle [radius=0.1];	
	\draw[fill] (10,0) circle [radius=0.1];	
	\draw[fill] (12,0) circle [radius=0.1];	
	\draw[fill] (14,0) circle [radius=0.1];	

	\draw[fill] (0,2) circle [radius=0.1];	
	\draw[fill] (2,2) circle [radius=0.1];	
	\draw[fill] (4,2) circle [radius=0.1];	
	\draw[fill] (6,2) circle [radius=0.1];	
	\draw[fill] (8,2) circle [radius=0.1];	
	\draw[fill] (10,2) circle [radius=0.1];	
	\draw[fill] (12,2) circle [radius=0.1];	

	\draw[fill] (0,4) circle [radius=0.1];	
	\draw[fill] (2,4) circle [radius=0.1];	
	\draw[fill] (4,4) circle [radius=0.1];	
	\draw[fill] (6,4) circle [radius=0.1];	
	\draw[fill] (8,4) circle [radius=0.1];	
	\draw[fill] (10,4) circle [radius=0.1];	
	
	\draw[fill] (0,6) circle [radius=0.1];	
	\draw[fill] (2,6) circle [radius=0.1];	
	\draw[fill] (4,6) circle [radius=0.1];	
	\draw[fill] (6,6) circle [radius=0.1];	
	\draw[fill] (8,6) circle [radius=0.1];	
	
	\draw[fill] (-1.4,-1.4) circle [radius=0.1];	
	\draw[fill] (0.6,-1.4) circle [radius=0.1];	
	\draw[fill] (2.6,-1.4) circle [radius=0.1];	
	\draw[fill] (4.6,-1.4) circle [radius=0.1];	
	\draw[fill] (6.6,-1.4) circle [radius=0.1];	
	\draw[fill] (8.6,-1.4) circle [radius=0.1];	
	\draw[fill] (10.6,-1.4) circle [radius=0.1];	
	
	\draw[fill] (-2.8,-2.8) circle [radius=0.1];	
	\draw[fill] (-0.8,-2.8) circle [radius=0.1];	
	\draw[fill] (1.2,-2.8) circle [radius=0.1];	
	\draw[fill] (3.2,-2.8) circle [radius=0.1];	
	\draw[fill] (5.2,-2.8) circle [radius=0.1];	
	\draw[fill] (7.2,-2.8) circle [radius=0.1];	
	
	\draw[fill] (-4.2,-4.2) circle [radius=0.1];	
	\draw[fill] (-2.2,-4.2) circle [radius=0.1];	
	\draw[fill] (-0.2,-4.2) circle [radius=0.1];	
	\draw[fill] (1.8,-4.2) circle [radius=0.1];	
	\draw[fill] (3.8,-4.2) circle [radius=0.1];	
	
	\draw[fill] (-5.6,-5.6) circle [radius=0.1];	
	\draw[fill] (-3.6,-5.6) circle [radius=0.1];	
	\draw[fill] (-1.6,-5.6) circle [radius=0.1];	
	\draw[fill] (0.4,-5.6) circle [radius=0.1];	

	\draw [red,arr] (0.4,-5.6) -- (3.8,-4.2);
	\draw [red,arr] (3.8,-4.2) -- (7.2,-2.8);
	\draw [red,arr] (7.2,-2.8) -- (10.6,-1.4);
	\draw [red,arr] (10.6,-1.4) -- (14,0);

	\draw [red,arr] (-1.6,-5.6) -- (1.8,-4.2);
	\draw [red,arr] (1.8,-4.2) -- (5.2,-2.8);
	\draw [red,arr] (5.2,-2.8) -- (8.6,-1.4);
	\draw [red,arr] (8.6,-1.4) -- (12,0);

	\draw [red,arr] (-3.6,-5.6) -- (-0.2,-4.2);
	\draw [red,arr] (-0.2,-4.2) -- (3.2,-2.8);
	\draw [red,arr] (3.2,-2.8) -- (6.6,-1.4);
	\draw [red,arr] (6.6,-1.4) -- (10,0);	

	\draw [red,arr] (-5.6,-5.6) -- (-2.2,-4.2);
	\draw [red,arr] (-2.2,-4.2) -- (1.2,-2.8);
	\draw [red,arr] (1.2,-2.8) -- (4.6,-1.4);
	\draw [red,arr] (4.6,-1.4) -- (8,0);	

	\draw [red,arr] (-4.2,-4.2) -- (-0.8,-2.8);
	\draw [red,arr] (-0.8,-2.8) -- (2.6,-1.4);
	\draw [red,arr] (2.6,-1.4) -- (6,0);	

	\draw [red,arr] (-2.8,-2.8) -- (0.6,-1.4);
	\draw [red,arr] (0.6,-1.4) -- (4,0);	
	
	\draw [red,arr] (-1.4,-1.4) -- (2,0);	
	
	\draw [red,arr] (0,0) -- (0,2);	
	\draw [red,arr] (0,2) -- (0,4);	
	\draw [red,arr] (0,4) -- (0,6);	

	\draw [red,arr] (2,0) -- (2,2);	
	\draw [red,arr] (2,2) -- (2,4);	
	\draw [red,arr] (2,4) -- (2,6);	

	\draw [red,arr] (4,0) -- (4,2);	
	\draw [red,arr] (4,2) -- (4,4);	
	\draw [red,arr] (4,4) -- (4,6);	

	\draw [red,arr] (6,0) -- (6,2);	
	\draw [red,arr] (6,2) -- (6,4);	
	\draw [red,arr] (6,4) -- (6,6);	

	\draw [red,arr] (8,0) -- (8,2);	
	\draw [red,arr] (8,2) -- (8,4);	
	\draw [red,arr] (8,4) -- (8,6);	

	\draw [red,arr] (10,0) -- (10,2);	
	\draw [red,arr] (10,2) -- (10,4);	
	
	\draw [red,arr] (12,0) -- (12,2);		
\end{tikzpicture}
  \caption{''Ground'' crystal paths of color $1$, $a=3$ and $b=4$.}
\end{figure}

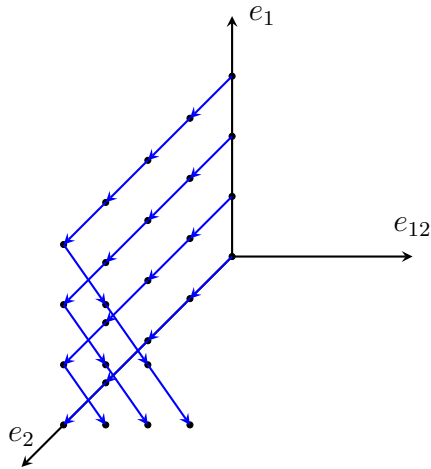
\begin{figure}[H]
\begin{tikzpicture}[scale=.4, arr/.style={thick,->,>=stealth},dsh/.style={thick,densely dashed},]
   
    \node at (-7,-6) {{$e_2$}};    
    \node at (1,8) {{$e_1$}};
    \node at (6,1) {{$e_{12}$}};
%    \node at (-8,-3) {{$a=3$}};   
%    \node at (4,7) {{$b=4$}};   
	\draw [arr] (0,0) -- (6,0);
	\draw [arr] (0,0) -- (0,8);
	\draw [arr] (0,0) -- (-7,-7);

	\draw[fill] (0,0) circle [radius=0.1];	
	\draw[fill] (0,2) circle [radius=0.1];	
	\draw[fill] (0,4) circle [radius=0.1];	
	\draw[fill] (0,6) circle [radius=0.1];	

	\draw[fill] (-1.4,-1.4) circle [radius=0.1];	
	\draw[fill] (-1.4,0.6) circle [radius=0.1];	
	\draw[fill] (-1.4,2.6) circle [radius=0.1];	
	\draw[fill] (-1.4,4.6) circle [radius=0.1];	

	\draw[fill] (-2.8,-2.8) circle [radius=0.1];	
	\draw[fill] (-2.8,-0.8) circle [radius=0.1];	
	\draw[fill] (-2.8,1.2) circle [radius=0.1];	
	\draw[fill] (-2.8,3.2) circle [radius=0.1];	

	\draw[fill] (-4.2,-4.2) circle [radius=0.1];	
	\draw[fill] (-4.2,-2.2) circle [radius=0.1];	
	\draw[fill] (-4.2,-0.2) circle [radius=0.1];	
	\draw[fill] (-4.2,1.8) circle [radius=0.1];	

	\draw[fill] (-5.6,-5.6) circle [radius=0.1];	
	\draw[fill] (-5.6,-3.6) circle [radius=0.1];	
	\draw[fill] (-5.6,-1.6) circle [radius=0.1];	
	\draw[fill] (-5.6,0.4) circle [radius=0.1];

	\draw [blue,arr] (0,0) -- (-1.4,-1.4);
	\draw [blue,arr] (-1.4,-1.4) -- (-2.8,-2.8);
	\draw [blue,arr] (-2.8,-2.8) -- (-4.2,-4.2);
	\draw [blue,arr] (-4.2,-4.2) -- (-5.6,-5.6);

	\draw [blue,arr] (0,2) -- (-1.4,0.6);
	\draw [blue,arr] (-1.4,0.6) -- (-2.8,-0.8);
	\draw [blue,arr] (-2.8,-0.8) -- (-4.2,-2.2);
	\draw [blue,arr] (-4.2,-2.2) -- (-5.6,-3.6);

	\draw [blue,arr] (0,4) -- (-1.4,2.6);
	\draw [blue,arr] (-1.4,2.6) -- (-2.8,1.2);
	\draw [blue,arr] (-2.8,1.2) -- (-4.2,-0.2);
	\draw [blue,arr] (-4.2,-0.2) -- (-5.6,-1.6);
	
	\draw [blue,arr] (0,6) -- (-1.4,4.6);
	\draw [blue,arr] (-1.4,4.6) -- (-2.8,3.2);
	\draw [blue,arr] (-2.8,3.2) -- (-4.2,1.8);
	\draw [blue,arr] (-4.2,1.8) -- (-5.6,0.4);

	\draw[fill] (-4.2,-1.6) circle [radius=0.1];	
	\draw[fill] (-2.8,-3.6) circle [radius=0.1];	
	\draw[fill] (-1.4,-5.6) circle [radius=0.1];

	\draw[fill] (-4.2,-3.6) circle [radius=0.1];	
	\draw[fill] (-2.8,-5.6) circle [radius=0.1];	

	\draw[fill] (-4.2,-5.6) circle [radius=0.1];	
	
	\draw [blue,arr] (-5.6,0.4) -- (-4.2,-1.6);
	\draw [blue,arr] (-4.2,-1.6) -- (-2.8,-3.6);
	\draw [blue,arr] (-2.8,-3.6) -- (-1.4,-5.6);

	\draw [blue,arr] (-5.6,-1.6) -- (-4.2,-3.6);
	\draw [blue,arr] (-4.2,-3.6) -- (-2.8,-5.6);

	\draw [blue,arr] (-5.6,-3.6) -- (-4.2,-5.6);
		
\end{tikzpicture}
  \caption{''Wall'' crystal paths of color $2$, $a=3$ and $b=4$.}
\end{figure}

\begin{figure}[H]
\begin{tikzpicture}[scale=.38, arr/.style={thick,->,>=stealth},dsh/.style={thick,densely dashed},]
   
    \node at (-7,-6) {{$e_2$}};    
    \node at (1,8) {{$e_1$}};
    \node at (11,1) {{$e_{12}$}};
%    \node at (-8,-3) {{$a=3$}};   
%    \node at (4,7) {{$b=4$}};   
	\draw [dsh] (0,0) -- (0,4);
	\draw [dsh] (0,0) -- (8,0);
	\draw [dsh] (0,0) -- (-2.8,-2.8);
	\draw [arr] (8,0) -- (11,0);
	\draw [arr] (0,4) -- (0,8);
	\draw [arr] (-2.8,-2.8) -- (-7,-7);
	\draw (0,4) -- (4,4) -- (-2.8,1.2) -- (0,4);
	\draw (-2.8,1.2) -- (-2.8,-2.8) -- (1.2,-2.8) -- (-2.8,1.2);
	\draw (4,4) -- (8,0) -- (1.2,-2.8);
	
	\draw[fill] (0,0) circle [radius=0.1];	
	\draw[fill] (0,4) circle [radius=0.1];	
	\draw[fill] (4,0) circle [radius=0.1];	
	\draw[fill] (4,4) circle [radius=0.1];	
	\draw[fill] (-2.8,1.2) circle [radius=0.1];	
	\draw[fill] (-2.8,-2.8) circle [radius=0.1];	
	\draw[fill] (8,0) circle [radius=0.1];	
	\draw[fill] (1.2,-2.8) circle [radius=0.1];	

	\draw [red,arr,opacity=0.9] (0,0) -- (0,4);
	\draw [blue,arr,opacity=0.9] (-2.8,1.2) -- (1.2,-2.8);
	\draw [red,arr,opacity=0.9] (-2.8,-2.8) -- (4,0);
	\draw [red,arr,opacity=0.9] (1.2,-2.8) -- (8,0);

	\draw [blue,arr,opacity=0.9] (0,0) -- (-2.8,-2.8);
	\draw [blue,arr,opacity=0.9] (0,4) -- (-2.8,1.2);
	\draw [red,arr,opacity=0.9] (4,0) -- (4,4);
	\draw [blue,arr,opacity=0.9] (4,4) -- (8,0);

\end{tikzpicture}
  \caption{The crystal $B^<(1,1)$ with respect to $\varpi_2>\varpi_1$.}
\end{figure}
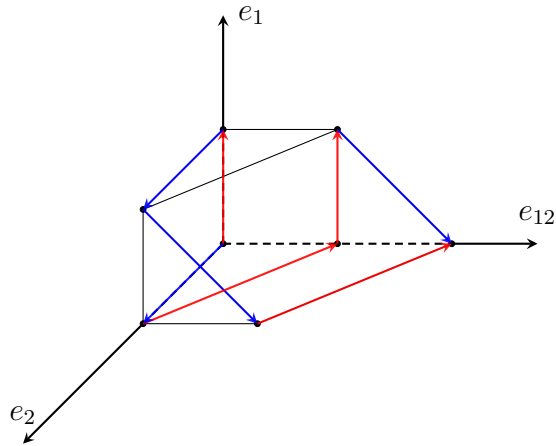

\begin{proposition}\label{crystal>}
The edge-colored graph $B^{<}(a, b)$ is an FFLV-crystal graph.
\end{proposition}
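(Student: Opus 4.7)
The proof will mirror that of Proposition \ref{crystal>} by exploiting the Dynkin diagram automorphism of $\mathfrak{sl}_3$ swapping $\alpha_1 \leftrightarrow \alpha_2$. Let $\phi: \mathbb{R}^{\Delta_+} \to \mathbb{R}^{\Delta_+}$ be the linear involution swapping the $e_1$ and $e_2$ coordinates while fixing $e_{12}$. Because the Dyck-path inequalities defining $\FFLV_2(\lambda)$ for $n=2$ (namely $x_1 \le \lambda_1$, $x_2 \le \lambda_2$, $x_1 + x_{12} + x_2 \le \lambda_1 + \lambda_2$, together with non-negativity) are invariant under the simultaneous interchange $(\lambda_1,\lambda_2) \leftrightarrow (\lambda_2,\lambda_1)$ and $(e_1,e_2) \leftrightarrow (e_2,e_1)$, the map $\phi$ restricts to a bijection of lattice points $\FFLV_2(a\varpi_1+b\varpi_2)_{\mathbb{Z}} \stackrel{\sim}{\to} \FFLV_2(b\varpi_1+a\varpi_2)_{\mathbb{Z}}$.

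Next I would verify that $\phi$, combined with the color swap $1 \leftrightarrow 2$, induces a graph isomorphism from $B^<(a,b)$ onto $B^>(b,a)$. The color-$1$ ``ground'' paths of $B^<(a,b)$ are translations by $(k,0,k)$; these are setwise fixed by $\phi$ and match the color-$2$ ``ground'' paths of $B^>(b,a)$. The color-$2$ ``wall'' paths of $B^<(a,b)$ are translations by $(0,k,-k)$, which $\phi$ sends to translations by $(-k,k,0)$: these are precisely the color-$1$ ``sky'' paths of $B^>(b,a)$. One further needs to check that the underlying edge-colored graph $\mathrm{PB}_2(a\varpi_1 + b\varpi_2)$ is intertwined with $\mathrm{PB}_2(b\varpi_1 + a\varpi_2)$ under $(\phi,\text{color swap})$; this reduces to checking that the explicit rules for $f_{a,k}$ recalled in Section \ref{Sec:5} transform symmetrically under the Dynkin swap $\alpha_1 \leftrightarrow \alpha_2$ combined with $k=1 \leftrightarrow k=2$ (observe that the reduced words $\bi^1$ and $\bi^2$ are themselves exchanged by the Dynkin symmetry).

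Since the FFLV-crystal axioms (the local Stembridge conditions for pairs with $|a-b|=1$ together with the at-most-one-outgoing-edge-per-color requirement) are symmetric under color swap, applying Proposition \ref{crystal>} to $B^>(b,a)$ and transporting along $(\phi, \text{color swap})$ would yield the desired conclusion that $B^<(a,b)$ is an FFLV-crystal graph on $\FFLV_2(a\varpi_1+b\varpi_2)_{\mathbb{Z}}$. The main obstacle I anticipate is the compatibility check alluded to above: showing that $\phi$ truly respects the full edge set of $\mathrm{PB}_n$ and not merely the chosen monochromatic paths of $B^<$ and $B^>$. Should the direct Dynkin argument prove delicate, an alternative is to replay the proof of Proposition \ref{crystal>} verbatim by constructing a crystal bijection $\kappa': K(a,b) \to B^<(a,b)$: identify the $(a+1)(b+1)$ critical points in $B^<(a,b)$ lying on the hyperplane normal to $(1,0,-1)$, match them with corners of the rectangle $[0,a]\times[0,b]$ in $K(a,b)$, and grow the crystal outward by engrafting parts of the color-$1$ ground paths and color-$2$ wall paths in place of the sky and ground paths used before; the required combinatorics is mirror-symmetric to the original argument.
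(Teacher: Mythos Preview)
Your proposal is correct, and in fact offers more than the paper does. The paper's own ``proof'' is a single sentence: ``Similarly to the proof of Proposition~\ref{crystal>}, one can establish a crystal bijection $\kappa': K(a,b)\to B^{<}(a,b)$. We leave details to the reader.'' So your fallback route---replaying the $\kappa$ construction with ground and wall paths in place of sky and ground paths---is precisely what the paper indicates.

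Your primary route via the Dynkin involution is genuinely different and more conceptual. It avoids redoing the case analysis by transporting the already-proved result for $B^{>}(b,a)$ along $(\phi,\text{color swap})$. The compatibility concern you flag is easily resolved from the explicit rules for $f_{a,k}$ in Section~\ref{Sec:5}: for $n=2$ one checks directly that $\phi\circ f_{1,1}=f_{2,2}\circ\phi$, $\phi\circ f_{2,2}=f_{1,1}\circ\phi$, and $\phi\circ f_{2,1}=f_{1,2}\circ\phi$ (both of the latter send $\bx$ to $\bx-\delta_{2,2}+\delta_{1,2}$ after applying $\phi$), so $\phi$ does intertwine the full edge sets of $\mathrm{PB}_2(a\varpi_1+b\varpi_2)$ and $\mathrm{PB}_2(b\varpi_1+a\varpi_2)$ and not merely the chosen monochromatic subgraphs. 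One small point to make explicit: beyond matching translation vectors, you should also note that $\phi$ carries the \emph{base} wall path of $B^{<}(a,b)$ (which runs $b$ steps along $e_2$ before turning) to the base sky path of $B^{>}(b,a)$ (which runs $b$ steps along $e_1$ before turning); this is immediate from the figures once $a$ and $b$ are swapped. With that, the symmetry argument is complete and arguably cleaner than the paper's ``repeat the construction'' approach, though the latter has the minor advantage of being self-contained and not relying on the low-rank Dynkin coincidence.
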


Similarly to the proof of Proposition \ref{crystal>}, one can establish a crystal bijection $\kappa':K(a, b)\to B^{<}(a, b)$.
We leave details to the reader.

\end{document}